    \newtheorem{thm}{Theorem}[section]%	
    \newtheorem{prop}[thm]{Proposition}%
    \newtheorem{cor}[thm]{Corollary}%
    \newtheorem{lem}[thm]{Lemma}%
    \newtheorem{defn}[thm]{Definition}%
    \newtheorem{rem}[thm]{Remark}%
    \numberwithin{equation}{section}
\begin{document}
			\begin{center}
				{\bf  Cauchy problem in function spaces with asymptotic expansions with respect to
				 		time variable $t$ }   \\ \vspace{3mm}
				   					Sunao {\sc \={O}uchi}
				  \footnote{Sophia Univ. Tokyo Japan, e-mail s\_ouchi@sophia.ac.jp \\
					Key Words and Phrases:	Cauchy Kowarevskaja Theorem, Borel summable, Multisummable, 
																	asymptotic expansions.   \\ 
				 2020 Mathematical Classification Numbers. 35F55, Secondary 35A10, 41A60.}
				 %%% 35F55 Initial value problems of nonlinear first-order PDEs.
				 %%% 35A10 Cauchy-Kowalevskaja theorem 
				 %%% 41A60 Asymptotic approximations, asymptotic expansions
			 \end{center}
	\begin{abstract}
		   A system of nonlinear Cauchy problem $\partial_t u_i=f_i(t,x, U, \nabla_xU )$  
			$u_i(0,x)= u_{i,0}(x)$ is studied in function spaces with asymptotic expansion 
			with respect to $t$. To be specific, it is discussed in Borel summable or 
			multisummable function space.
			It is recognized that these functions are important classes in asymptotic analysis. 
			We study equations under the condition $\{f_i(t,x, U, P)\}_{i=1}^m$ are in these function spaces 
			with respect to $t$ and show $\{u_i(t,x)\}_{i=1}^m$ have also the same summability. 
	\end{abstract}
	 \section{\normalsize Introduction}
	    	In this paper we study an initial value problem of a system of nonlinear partial differential 
		    equations in $(t,x)=(t,x_1, \dots, x_n)\in {\mathbb C}^{n+1}$, %% or ${\mathbb R}^{n+1}$.
		\begin{equation} \left \{
			\begin{aligned} \
				 & \partial_t u_i=f_i\big(t,x, U, \nabla_xU \big) 
				 	\quad u_i(0,x)= u_{i,0}(x)\quad 1 \leq i \leq m, \\
				 & U=(u_1, \dots,u_m)\quad 
				   \nabla_x U =\{\partial_{x_j}u_i\}_{1\leq i \leq m, 1\leq j \leq n}, 
			\end{aligned} \right . \label{01}
		\end{equation}
			where the initial values $u_{i,0}(x)\;(1 \leq i \leq m)$ are holomorphic in a neighborhood of $x=0$.
			Let us denote variables corresponding to $\nabla_x u=\{\partial_{x_j}u_i\}$ by $P=(p_{i.j})$. 
			If $\{f_i\big(t,x, U,P\big)\}_{1\leq i \leq m}$	are holomorphic functions for all variables $
			(t,x,U,P)$, then well-known Cauchy-Kowalevskaja theorem asserts that 
			there exists a unique holomorphic solution represented by convergent power series   
			$	{u}_{i}(t,x)=\sum_{n=0}^{\infty}u_{i,n}(x)t^n.$ 
			The condition of analyticity of time $t$ can be weakened for \eqref{01}. Nagumo \cite{Nagumo} 
			 showed unique existence of solution 
			 under the condition $\{f_i\big(t,x, U, P \big)\}_{1\leq i \leq m}$ 
			are holomorphic for variables $(x,U,P)$ and continuous in $t$, that is, 			
			there exists a unique solution $U(t,x)$ which is holomorphic in $x$ and continuously differentiable 
			in $t$ in a domain $\{(t,x)\in {\mathbb R}\times{\mathbb C}^{n}; |t|<r_0, |x|<r\}$	
			(see also Nirenberg \cite{Nirenberg}).
			 \par
			Suppose that $\{f_i(t,x,U,P)\}_{i=1}^m$ are not necessarily holomorphic at $t=0$, but have asymptotic 
			expansion with respect to $t$ at $t=0$, 
			\begin{equation}
				f_{i}(t,x,u,p)\sim \sum_{n=0}^{\infty}f_{i,n}(x,U,P)t^n. \label{f's}
			\end{equation}
			Then there exists a unique formal power series solution 
			$\widetilde{U}(t,x)=(\widetilde{u}_{1},\cdots,	\widetilde{u}_{m})$ of \eqref{01},
			\begin{equation}
				\widetilde{u}_{i}(t,x)=\sum_{n=0}^{\infty}u_{i,n}(x)t^n,\; u_{i,0}(x)=u_{i}(x).\label{fs}
			\end{equation}
			The solution ${U}(t,x)$ assured in \cite{Nagumo} has asymptotic expansion \eqref{fs}. 
			We have a problem: 
			{\it What is the meaning of asymptotic expansion} \eqref{fs} ?\par
				 The main purpose of this paper is to study the meaning of \eqref{fs} more precisely. 
			In order to answer the above problem we use the theory of multisummable functions 
			in asymptotic analysis.  The set of Borel summable functions is its subclass, which is simpler and 
			can be treated more easily than general multisummabilty. 
			Multisummable and Borel summable functions are important notions
			(Balser \cite{Bal}), in particular asymptotic analysis. 
		  They give the width of the opening angle of a sector where asymptotic expansion is valid and the
			estimate of the error terms with $\Gamma$-function. They are firstly used to study ordinary 
			differential equations and give more precise  meaning of formal solutions than the preceding results 
			(Baler, Braaksam, Ramis, Sibuya \cite{BBRS}, Braaksam \cite{Bra}, Wasow \cite{Was}). \par
				They are also used to study partial differential equations. Firstly 
			Borel summability of formal power series solutions of initial problem of heat equation was 
			studied in Lutz, Miyake, Sch\"{a}fke \cite{LMS}. 
			It is suggested there that the situation for partial differential equations 
			is much different from that for ordinary differential equations. After \cite{LMS}
			 many researches follow (Hibino \cite{H}, Ichinobu, \cite{Ich}, 
				Malek \cite{Malek-1} \cite{Malek-2}, Luo, Chen, Zhang \cite{LCZ}, \={O}uchi \cite{O-2}, Remy  \cite{Re}
				and others) as for Borel summability. 
			They study Borel summability of solutions of partial differential equations under some conditions of 
			initial values or types of equations.
			 As for multisummability it is shown that solutions of some classes of partial differential 
			equations are	multisummable. Equations considered as perturbations of 
			ordinary differential equations are studied in \={O}uchi \cite{O-1},\cite{O-3} and linear equations with  
			depending only $t$ or constant coefficients are studied in Balser \cite{Bal-1},  Michalik 
			\cite{Mich}, Tahara Yamazawa \cite{TY}. \par
		%%%%%%%%%%%%%%		
		    In this paper we answer for {\it "the meaning of asymptotic expansion \eqref{fs}"}. 
		\begin{enumerate}
			\item[(1)]	If $\{f_i(t,x,U,P)\}_{i=1}^m$ are Borel summable, 
				so are $\{\widetilde{u}_i(t,x)\}_{i=1}^{m}$ ?  
		  \item[(2)]
				If $\{f_i(t,x,U,P)\}_{i=1}^m$ are multisummable, so are $\{\widetilde{u}_i(t,x)\}_{i=1}^{m}$ ? 
		\end{enumerate}
			 We give affirmative answers of these problems in the present paper. \\
			The content is the following: 
			\begin{enumerate}
			  \item[\S 1] {\it Notations and Definitions}.
			   We give notations and definitions concerning formal series and Borel summable functions.
			 \item[\S 2-3]{\it Borel summability of solutions of Cauchy Problem}
			   	\begin{enumerate}
			   	  \item Preliminary.
					  \item Convolution equation. We change the initial value problem of partial differential
					    equations to a system of partial differential and  convolution equations. . 
					  \item Estimate and Convergence. It consists of 3 parts,  	
					    \begin{enumerate}	
					  \item Majorant functions. 
					  \item Estimate.
					  \item Convergence.
					  \end{enumerate}
			 		\end{enumerate}
			 	\item[\S 4-5]{\it Equations in Multisumamble	function spaces}. We give the definition of 
			 	${\bf k}$-multisumamble	function, ${\bf k}=(k_1,\dots,k_p)$, and its basic properties. 
			 	We study Cauchy problem in these function spaces. 
			 	Roughly speaking, we  repeat $p$ times the method used for Borel summable case.  
			 	\item[\S 6\;\;] Appendix.
				\end{enumerate}
		\section{\normalsize Notations and definitions}
				Firstly we introduce some notations and definitions. Let $\widetilde{{\mathbb C}}_{\{0\}}$ be the 
				universal covering space of  
				${\mathbb C}-\{0\}$ and $I=(a,b)$ be an open interval. Let us define some sectorial domains.
				$S(I):=\{t\in{\widetilde{{\mathbb C}}_{\{0\}}}; \arg t\in I\}$ is an infinite sector and 				 
				$S_{0}(I):=\{t\in \widetilde{{\mathbb C}}_{\{0\}}; \arg t \in I, 0<|t|<\rho(\arg t)\}$,
				where $\rho(t)$ is some positive continuous function on $I$. The same notation 
				$S_{0}(\cdot)$ is	used for various $\rho(\cdot)$. 
				 For arbitrary small $\epsilon>0$, set $I_{\epsilon}:=
				(a+\epsilon, b-\epsilon)\subset I$.  
		 		${\mathscr O}(\Omega)$ is the set of holomorphic functions on a domain $\Omega$. 
				${\mathbb N}$ is the set of nonnegative integers. 
				Let $\alpha=(\alpha_1,\dots,\alpha_n)\in  {\mathbb N}^n$ and 
		 		$Y=(y_1.\dots,y_n)\in {\mathbb C}^n$. 
		 		Then we use notations $\alpha!=\alpha_1!\alpha_2!\dots \alpha_n!$,
		 		$|\alpha|=\sum_{i=1}^{n}\alpha_i$, 
		 		$Y^{\alpha}=y_1^{\alpha_1}\dots y_n^{\alpha_n}$, $|Y|=\max_{1\leq i\leq n}|y_i|$ and	
		 		$(\frac{\partial}{\partial Y})^\alpha=\prod_{i=1}^{n}(\frac{\partial}{\partial y_i})^{\alpha_i}$.		
				${\mathscr O}(\Omega)[[t]]$ is the set of formal power series of $t$ with coefficients in
				${\mathscr O}(\Omega)$.
		    \begin{defn} Let $k>0$, $I=(a,b)$ with $b-a>\pi/k$ and 
				$\Omega=\{Y\in {\mathbb C}^n;	|Y|<R \}$. 			
					$\phi(t,Y)\in {\mathscr O}(S_{0}(I)\times \Omega)$ is said to be $k$-Borel summable 
					with respect to $t$, if  
					there exist constants $M$, $C$ and $\{a_n(y)\}_{n=0}^{\infty}\subset {\mathscr O}(\Omega)$ 
					such that for any $N\geq 0$
			\begin{equation}
					|\phi(t,Y)-\sum_{n=0}^{N-1}a_n(Y)t^n|\leq MC^N|t|^N{\Gamma(\frac{N}{k}+1)}\quad 
					(t,Y)\in S_{0}(I)\times \Omega \label{1.1}
			\end{equation}
				holds. The totality of functions on 
				$S_{0}(I)\times \Omega$ that are $k$-Borel summable  with respect to $t$ is denoted by 
				$\mathscr O_{\{1/k\}}(S_{0}(I)\times \Omega)$.   
		\end{defn} 
			We note that  $\phi(0,Y)=0$ means $a_0(y)=0$. 
		 \begin{defn} Let 	
				$\widetilde{\phi}(t,Y)=\sum_{n=0}^{\infty}a_n(Y)t^n \in {\mathscr O}(\Omega)[[t]]$. 
				$\widetilde{\phi}(t,Y)$ is said be 
				$k$-Borel summable in a direction $\theta$ in $\Omega$, if there exists  
				an interval $I=(\theta-\delta, \theta+\delta)$ with $\delta>\pi/2k$ and
				$\phi(t,Y)\in {\mathscr O}_{\{1/k\}}(S_{0}(I)\times \Omega)$ such that 
			\begin{equation}
				|\phi(t,Y)-\sum_{n=0}^{N-1}a_n(Y)t^n|\leq MC^N|t|^N{\Gamma(\frac{N}{k}+1)}\quad 
				(t,Y)\in S_{0}(I)\times \Omega. \label{1.1}
			\end{equation}		
		\end{defn}
		\begin{rem}
			  If $\widetilde{\phi}(t,Y)$ is $k$-Borel summable, then $\phi(t,Y)$ is uniquely determined. Hence
			  $k$-Borel summable $\widetilde{\phi}(t,Y)$ and $\phi(t,Y)$ are often identified 
		\end{rem}
	        Let us proceed to define $k$-Laplace and $k$-Borel transform.
		\begin{defn} Let $k>0$, $\widehat{I}$ be an interval and $\Omega=\{Y\in {\mathbb C}^n;	|Y|<R \}$.  
	   		${\rm Exp}(k,S(\widehat{I})\times \Omega)$ is the totality of 
			    $\psi(\xi,Y)\in \mathscr{O}(S(\widehat{I})\times \Omega)$ such that there exist positive constants
			     $c$ and $M$ with 
		    \begin{equation}
		    	\begin{aligned}
		    	 |\psi(\xi,Y)|\leq M e^{c|\xi|^{k}} \quad  \,\{\xi\in S(\widehat{I});|\xi|\geq 1\}. 	
		    	\end{aligned}
		    \end{equation}
	    \end{defn} 	    
	    	For $\psi(\xi,Y)\in {\rm Exp}(k,S(\widehat{I})\times \Omega)$  with
	    	\begin{equation}
		 	 |\psi(\xi,Y)|\leq M|\xi|^{s-k} \;s>0
		 	 \quad  \, \{\xi\in S(\widehat{I});|\xi|\leq 1\} \label{1.3}
		\end{equation}
			$k$-Laplace transform $\mathscr{L}_{k}\psi$ is defined by  
		  \begin{equation}
		  	\begin{aligned}
		  		(\mathscr{L}_{k,\theta}\psi)(t,Y):=\int_{0}^{e^{i\theta}\infty}\exp({-(\frac{\xi}{t})^{k}})
		  		\psi(\xi,Y)d\xi^{k} \quad  \theta\in \widehat{I}.
		  	\end{aligned}
		  \end{equation}
	    		  Next let us define $k$-Borel transform.  Let $I=(a,b)$ with $b-a>\pi/k$,
   			  $\theta=(a+b)/2$ and $\delta=(b-a)/2>\pi/(2k)$.
   			  Then $I=(\theta-\delta,\theta+\delta)$.
  	             Let  $\psi(t,Y)\in \mathscr{O}(S_0(I)\times \Omega)$ and $|\psi(t,Y)|\leq C|t|^c (c>0)$.  
			$k$-Borel transform of $\psi(x,Y)$  is defined by
		\begin{equation}
			\begin{aligned}
				(\mathscr{B}_{k,\theta}\psi)(\xi,Y):= \frac{1}{2\pi i} \int_{\mathcal C}\exp(\frac{\xi}{t})^{k} 
				\psi(t,Y)dt^{-k},
			\end{aligned}%				
		\end{equation}
					where ${\mathcal C}$ is a contour in $S_0(I)$ that starts from $0e^{i(\theta+\delta')}$ to 
					$r_0e^{i(\theta+\delta')}$ on a segment and next on an arc $|t|=r_0$ to $r_0e^{i(\theta-\delta')}$ 
					and finally on a segment ends at 
					$0e^{i(\theta-\delta')}$\; ($\delta>\delta'>\pi/2k$).  		
					Let $\widehat{a}(k)=a+\pi/2k$,  
						$\widehat{b}(k)=b-\pi/2k$ and 
						$\widehat{I}({k})=(\widehat{a}(k),\widehat{b}(k))$. Then the followings hold.	
				\begin{enumerate}
				   \item[{\rm (1)}]{\it $(\mathscr{B}_{k,\theta}\psi)(\xi,Y)$ is holomorphic in an infinite sector
						 	$S(\widehat{I}(k))$ with respect to $\xi$.}
				   \item[{\rm (2)}]  {\it $(\mathscr{B}_{k,\theta}\psi)(\xi,Y)\in
					   {\rm Exp}(k,S(\widehat{I}({k})_{\epsilon}), \Omega)$ for any small $\epsilon>0$ 
				   and $\psi(t,Y)=(\mathscr{L}_{k,\theta}\frak{B}_{k,\theta}\psi)(t,Y)$.}
	  			\end{enumerate}	 
	  			   In particular, the following holds  for 
	  			   $\phi(t,Y)\in {\mathscr O}_{\{1/k\}}(S_{0}(I)\times \Omega)$.                %%9/20
	   	\begin{prop} Let $I=(\theta-\delta,\theta+\delta)$ with $\delta>\pi/2k$ and 
		     	$\widehat{I}=(\theta-\epsilon_*, \theta+\epsilon_*)$, $\epsilon_*:=\delta-\pi/2k>0$. Let
			   $\phi(t,Y)\in {\mathscr O}_{\{1/k\}}(S_{0}(I)\times \Omega)$ with 
		        ${\phi}(0,Y)=0$.  Then $({\mathscr B}_{k,\theta}{\phi})(\xi,Y)$ 
			   is holomorphically extensible to an infinite sector $S(\widehat{I})$ and
			   $\xi^{k-1}({\mathscr B}_{k,\theta}{\phi})(\xi,Y)$ is holomorohic in $\{|\xi|<r\}$.
			   Further for any small $\epsilon>0$ there exist $C,c>0$ such that
		   \begin{equation}
		   	\begin{aligned}
			   	 |({\mathscr B}_{k,\theta}{\phi})(\xi,Y)|\leq C\frac{|\xi|^{1-k}}{\Gamma(1/k)}e^{c|\xi|^k}
			   	 \quad Y\in \Omega
		   	\end{aligned}\label{1.11}
		   \end{equation}
			   holds in  $\big(S(\widehat{I}_\epsilon)\cup \{0<|\xi|<r\}\big)\times \Omega$.  $\phi(t,Y)$ is 
			   represented by
		   \begin{equation}
		   	\begin{aligned}
			   	  \phi(t,Y)=\int_{0}^{\infty e^{i\theta}} \exp(-(\frac{\xi}{t})^k)
			   	  ({\mathscr B}_{k,\theta}{\phi})(\xi,Y)d\xi^k\quad \theta \in \widehat{I}.
		   	\end{aligned}
		   \end{equation}
			   \label{prop1.5}
	   \end{prop}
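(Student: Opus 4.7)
The plan is to derive the proposition from the general properties (1)--(2) of $\mathscr{B}_{k,\theta}$ recalled just above the statement, combined with the Borel-summability estimate of Definition~1.1. There are three distinct tasks: (a) the holomorphic extension of $\mathscr{B}_{k,\theta}\phi$ to the whole infinite sector $S(\widehat{I})$ together with the Laplace reconstruction, (b) the holomorphy of $\xi^{k-1}\mathscr{B}_{k,\theta}\phi$ near $\xi=0$, and (c) the single uniform bound of exponential type that combines the two regimes.

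For (a), I would first use the hypothesis $\phi(0,Y)=0$ together with the Borel-summability inequality applied with $N=1$ to obtain $|\phi(t,Y)|\leq MC|t|\,\Gamma(1/k+1)$ on $S_0(I)\times\Omega$, which guarantees absolute convergence of the contour integral defining $\mathscr{B}_{k,\theta}\phi$. The identification $\widehat{I}(k)=(\theta-\epsilon_*,\theta+\epsilon_*)=\widehat{I}$ follows by direct computation from $I=(\theta-\delta,\theta+\delta)$ and $\epsilon_*=\delta-\pi/(2k)$, so properties (1) and (2) at once give both the holomorphic extension of $\mathscr{B}_{k,\theta}\phi$ to the infinite sector $S(\widehat{I})$ and the inverse formula $\phi=\mathscr{L}_{k,\theta}\mathscr{B}_{k,\theta}\phi$.

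For (b) and the estimate near $\xi=0$, I would substitute the partial-sum decomposition
\[
\phi(t,Y)=\sum_{n=1}^{N-1}a_n(Y)t^n+R_N(t,Y),\qquad |R_N(t,Y)|\leq MC^N|t|^N\Gamma(N/k+1),
\]
into the defining contour integral. A direct Hankel-type computation yields $\mathscr{B}_{k,\theta}(t^n)=\xi^{n-k}/\Gamma(n/k)$, while a Cauchy-style comparison of two consecutive truncations of the Borel-summability inequality forces $|a_n(Y)|\leq M'(C')^n\Gamma(n/k+1)$ uniformly on $\Omega$. Letting $N\to\infty$ and multiplying by $\xi^{k-1}$ represents
\[
\xi^{k-1}(\mathscr{B}_{k,\theta}\phi)(\xi,Y)=\sum_{n\geq 1}\frac{a_n(Y)}{\Gamma(n/k)}\xi^{n-1},
\]
a Mittag-Leffler-type series that converges on a small disk $\{|\xi|<r\}$, which gives holomorphy at $\xi=0$ and an estimate of the desired shape there.

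For (c) on the outer part of $S(\widehat{I}_\epsilon)$, I would deform the contour $\mathcal{C}$ in the integral defining $\mathscr{B}_{k,\theta}\phi$ to rays at arguments slightly inside $\widehat{I}$ and bound the integrand using $|\phi(t,Y)|\leq MC|t|\Gamma(1/k+1)$, balancing the radius $|t|$ against $|\xi|$ by the standard saddle-point scaling for exponents of order $k$. This produces exponential growth of order exactly $k$ with constant $c$ depending only on $\epsilon$ and the data in Definition~1.1. Stitching this exterior bound to the interior power-series bound produces the uniform inequality claimed in the proposition. The main technical obstacle, and the step requiring the most care, is matching the prefactor $|\xi|^{1-k}/\Gamma(1/k)$ and the constants $C,c$ \emph{uniformly} across the interior disk $\{|\xi|<r\}$ and the outer sector $S(\widehat{I}_\epsilon)$, so that a single pair of constants works on the union $(S(\widehat{I}_\epsilon)\cup\{0<|\xi|<r\})\times\Omega$.
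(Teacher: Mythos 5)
Your proposal is correct and follows the same route the paper intends: the paper gives no written proof of Proposition \ref{prop1.5}, presenting it as an immediate consequence (``in particular'') of the general properties (1)--(2) of $\mathscr{B}_{k,\theta}$ stated just before it, together with the asymptotic estimate of Definition 1.1 --- exactly the ingredients you use. Your outline simply supplies the standard details (the $N=1$ bound justifying convergence of the contour integral, the Hankel computation $\mathscr{B}_{k,\theta}(t^n)=\xi^{n-k}/\Gamma(n/k)$ and coefficient bounds giving holomorphy of $\xi^{k-1}\widehat{\phi}$ at $\xi=0$, and the exponential-order-$k$ bound on the outer sector), all of which are the classical arguments from Balser's theory that the paper implicitly invokes.
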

 	 	 	 We denote $({\mathscr B}_{k,\theta}{\phi})(\xi,Y)$ by $\widehat{\phi}(\xi,Y)$ often in short. 
		   	We have the following from Cauchy's inequality. 
     	\begin{lem} Let $\widehat{J}=(\widehat{a},\widehat{b})$ and 
					$\psi(\xi,Y)\in {\rm Exp}(k,S(\widehat{J})\times \Omega)$ with
				$	 |\psi(\xi,Y)|\leq M|\xi|^{s-k} e^{c|\xi|^{k}}$. Let 
				$\psi(\xi,Y)=\sum_{q\in \mathbb{N}^n}\psi_q(\xi)Y^q$. Then 
			\begin{equation}
				\begin{aligned}
				     |\psi_q(\xi,Y)|\leq \frac{M |\xi|^{s-k}e^{c|\xi|^{k}}}{R^{|q|}}.
				\end{aligned}
			\end{equation}\label{lem1.6}
	\end{lem}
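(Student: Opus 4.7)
The plan is to apply the standard Cauchy integral formula in the $Y$-variables, with $\xi \in S(\widehat{J})$ treated as a parameter; the bound on $\psi$ in terms of $|\xi|$ then passes directly to each coefficient $\psi_q(\xi)$.

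First, I would fix an arbitrary $\xi \in S(\widehat{J})$. By hypothesis $\psi(\xi, \cdot)$ is holomorphic on the polydisc $\Omega = \{|Y|<R\}$, and the majorant $M|\xi|^{s-k} e^{c|\xi|^k}$ is independent of $Y$, so it bounds $\psi(\xi, Y)$ uniformly on every closed sub-polydisc $\{|y_j|=r\}$ with $r<R$.

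Next, I would write the iterated Cauchy integral
\begin{equation}
  \psi_q(\xi) = \frac{1}{(2\pi i)^n}\oint_{|y_1|=r}\!\!\cdots\oint_{|y_n|=r}
     \frac{\psi(\xi,Y)}{y_1^{q_1+1}\cdots y_n^{q_n+1}}\,dy_1\cdots dy_n,
\end{equation}
and estimate the integrand in modulus using the given bound, which yields
\begin{equation}
  |\psi_q(\xi)| \leq M|\xi|^{s-k}e^{c|\xi|^k} r^{-|q|}.
\end{equation}
Letting $r \uparrow R$ gives the stated inequality. Since $\xi$ was arbitrary in $S(\widehat{J})$, the estimate holds throughout the sector.

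There is no real obstacle: this is simply Cauchy's inequality applied with a holomorphic parameter, and the fact that the majorant of $\psi$ depends only on $|\xi|$ (not on $Y$) is what allows the $Y$-dependence to drop out cleanly and produce the factor $R^{-|q|}$.
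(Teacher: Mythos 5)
Your argument is correct and is exactly what the paper intends: the paper dispenses with this lemma by the single remark that it ``follows from Cauchy's inequality,'' and your iterated Cauchy integral over polydiscs $\{|y_j|=r\}$ with $r\uparrow R$ is precisely that inequality spelled out, with $\xi$ as a holomorphic parameter.
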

				$k$-convolution $(\psi_1\underset{k}{*}\psi_2)(\xi,Y)$ is defined by
			\begin{equation}
				\begin{aligned}
				(\psi_1\underset{k}{*}\psi_2)(\xi,Y):=\int_0^{\xi}\psi_1\big((\xi^k-\eta^k)^{1/k},Y\big)
				\psi_2(\eta,Y)d\eta^k.
				\end{aligned}
			\end{equation}
      		\begin{lem} Let $0<k\leq \kappa$ and $\widehat{J}=(\widehat{a},\widehat{b})$. Let  
				$\phi_i(\xi,x)\in {\mathscr{O}}(S_{0}(\widehat{J})\times \Omega)\; (i=1,2)$ such that
			\begin{equation*}
				\begin{aligned}
				 |\phi_i(\xi,x)|\leq \frac{C_i|\xi|^{s_i-k}e^{c|\xi|^{\kappa}}}{\Gamma(s_i/k)}, \; s_i>0.
				 \end{aligned}
			\end{equation*}
				Then $(\phi_1\underset{k}{*}\phi_2)(\xi,x) \in  {\mathscr{O}}(S_{0}(\widehat{J})\times \Omega)$ 
					and  
			\begin{equation}	
				\begin{aligned}
				   |(\phi_1\underset{k}{*}\phi_2)(\xi,x)|\leq 
				    \frac{C_1C_2|\xi|^{s_1+s_2-k}e^{c|\xi|^{\kappa}}}{\Gamma((s_1+s_2)/k)}. 
				\end{aligned}
			\end{equation}	\label{conv}
		\end{lem}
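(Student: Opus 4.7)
The plan is to reduce the convolution integral to a Beta-type integral by substitution, and then combine the polynomial bound with the exponential bound using the hypothesis $\kappa\geq k$.

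First, I substitute $\eta^k = \xi^k \tau$, $\tau\in[0,1]$, so that $d\eta^k = \xi^k\,d\tau$ and $(\xi^k-\eta^k)^{1/k} = \xi(1-\tau)^{1/k}$. The definition becomes
\begin{equation*}
(\phi_1 \underset{k}{*} \phi_2)(\xi,x) = \xi^k\int_0^1 \phi_1\bigl(\xi(1-\tau)^{1/k},x\bigr)\phi_2\bigl(\xi\tau^{1/k},x\bigr)\,d\tau.
\end{equation*}
Since the integrand is continuous in $\tau\in[0,1]$ and holomorphic in $(\xi,x)\in S_0(\widehat J)\times\Omega$ (with the branch of $\xi^{1/k}$ staying in the sector), standard results give holomorphy of $\phi_1\underset{k}{*}\phi_2$ on $S_0(\widehat J)\times\Omega$.

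For the estimate, I apply the hypothesis to each factor:
\begin{equation*}
\bigl|\phi_i\bigl(\xi\sigma_i^{1/k},x\bigr)\bigr|\leq \frac{C_i|\xi|^{s_i-k}\sigma_i^{(s_i-k)/k}\,e^{c|\xi|^\kappa \sigma_i^{\kappa/k}}}{\Gamma(s_i/k)},
\end{equation*}
with $\sigma_1=1-\tau$, $\sigma_2=\tau$. The key observation is that, since $\kappa/k\geq 1$, for $\tau\in[0,1]$ both $\tau^{\kappa/k}\leq \tau$ and $(1-\tau)^{\kappa/k}\leq 1-\tau$, so the product of the two exponentials is bounded by $e^{c|\xi|^\kappa}$. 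This is the only place where the assumption $k\leq\kappa$ is used; it is precisely what allows one to avoid doubling the exponential rate.

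Pulling the factor $|\xi|^{s_1+s_2-k}\,e^{c|\xi|^\kappa}/(\Gamma(s_1/k)\Gamma(s_2/k))$ out of the integral leaves
\begin{equation*}
\int_0^1 (1-\tau)^{(s_1-k)/k}\,\tau^{(s_2-k)/k}\,d\tau = B\bigl(s_1/k,\,s_2/k\bigr) = \frac{\Gamma(s_1/k)\,\Gamma(s_2/k)}{\Gamma((s_1+s_2)/k)},
\end{equation*}
and the $\Gamma(s_i/k)$ factors cancel, yielding the claimed bound. I do not foresee any real obstacle: the estimate is essentially the Beta-function computation, and the only nontrivial input is the subadditivity $\sigma_1^{\kappa/k}+\sigma_2^{\kappa/k}\leq 1$ for $\sigma_1+\sigma_2=1$ with exponent $\geq 1$, which secures the correct exponential rate.
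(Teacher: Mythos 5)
Your proof is correct and follows essentially the same route as the paper: after parametrizing the convolution along the ray from $0$ to $\xi$, the paper uses the same subadditivity inequality $(|\xi|^k-r^k)^{\kappa/k}+r^{\kappa}\leq |\xi|^{\kappa}$ (your normalized form $(1-\tau)^{\kappa/k}+\tau^{\kappa/k}\leq 1$) and the same Beta-integral evaluation to cancel the $\Gamma(s_i/k)$ factors.
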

		\begin{proof}
			Let $\arg \xi =\theta$. Then we have  
			\begin{equation*}
				\begin{aligned}
					 (\phi_1\underset{k}{*}\phi_2)(\xi,x)&=\int_{0}^{|\xi|{e^{i\theta}}}
							 	\phi_1((\xi^k-\eta^k)^{1/k},x)\phi_2(\eta,x)d\eta^k \\
						&=\int_{0}^{|\xi|}
							 	\phi_1((|\xi|^k-r^k)^{1/k}e^{i\theta},x)\phi_2(re^{i\theta},x)e^{ik\theta}  dr^k	 	
				\end{aligned}
			\end{equation*}
			and it follows from $(|\xi|^k-r^k)^{\kappa/k}+r^{\kappa}\leq  |\xi|^{\kappa}$ for
				$0\leq r \leq |\xi|$ that
			\begin{equation*}
				\begin{aligned}
					 |(\phi_1\underset{k}{*}\phi_2)(\xi,x)|
					& \leq \int_{0}^{|\xi|}
							 	|\phi_1((|\xi|^k-r^k)^{1/k}e^{i\theta},x)||\phi_2(re^{i\theta},x)| dr^k \\
					& \leq \frac{C_1C_2e^{c|\xi|^{\kappa}}}{\Gamma(s_1/k)\Gamma(s_2/k)} \int_{0}^{|\xi|}
							 	(|\xi|^k-r^k)^{s_1/k-1}r^{s_2-k} dr^k	\\
					& \leq \frac{C_1C_2|\xi|^{s_1+s_2-k}e^{c|\xi|^{\kappa}|}}{\Gamma((s_1+s_2)/k)}.			 		 		 				\end{aligned}
			\end{equation*}
		\end{proof}
			We remark that Lemma \ref{conv} also holds for an infinite sector $S(\widehat{J})$.
		Let $\varphi_i(t,x)=({\mathscr L}_\theta \phi_i)(t,x)$ (i=1,2). Then there is 
		a basic relation between product of functions and convolution,
		\begin{equation}
			\begin{aligned}
			\varphi_1(t,x)\varphi_2(t,x)=\big({\mathscr L}_\theta (\phi_1*\phi_2)\big)(t,x).
			\end{aligned}\label{prod}
		\end{equation} 
				We sum up shortly relations between formal power series  $\mathscr{O}(\Omega)[[t]]$
					and $\mathscr{O}_{\{1/k\}}(S_0(I)\times\Omega)$.
		  \begin{defn} Let $k>0$. ${\mathscr O}(\Omega)[[t]]_{\frac{1}{k}}$ is the set of totality of 
  	 	   	$\widetilde{\psi}(t,Y)=\sum_{n=0}^{\infty}c_n(Y)t^n \in {\mathscr O}(\Omega)[[t]]$ 
  		    	such that there exist constants
		      	$M$ and $C$ with $|c_n(Y)|\leq MC^n\Gamma(n/k+1)$ for $Y\in \Omega$.
		    \end{defn}
 		  	Let $	\widetilde{\psi}(t,Y)=\sum_{n=1}^{\infty}c_n(Y)t^n \in {\mathscr O}(\Omega)[[t]]  $ with
			$c_0(Y)=0$.
   		Then formal $k$-Borel transform $\widetilde{{\mathscr B}}_{k}\widetilde{\psi}$ is defined by
	     \begin{equation}
        	\begin{aligned}
	        	  (\widetilde{{\mathscr B}}_{k}\widetilde{\psi})(\xi,Y):=\sum_{n=1}^{\infty}
 	       	  \frac{c_n(Y)}{\Gamma(\frac{n}{k})}\xi^{n-k}.
        	\end{aligned}
  	    \end{equation}
	       If $\widetilde{\psi}(t,Y) \in {\mathscr O}(\Omega)[[t]]_{\frac{1}{k}}$,  then there is a constant  $r>0$ such that
	        $\xi^{k-1}(\widetilde{{\mathscr B}}_{k}\widetilde{\psi})(\xi,Y)$ converges in $\{|\xi|<r\}$. 
	     If $\phi(t,x)\in \mathscr{O}_{\{1/k\}}(S_0(I)\times\Omega)$ with asymptotic expansion 
	      $\widetilde{\phi}(t,Y)=\sum_{n=1}^{\infty}a_n(Y)t^n \in {\mathscr O}(\Omega)[[t]] $, then
	    $({\mathscr B}_{k,\theta}{\phi})(\xi,Y)=(\widetilde{{\mathscr B}}_{k}\widetilde{\phi})(\xi,Y)$.
		\begin{rem}	
			      Let $I=(a,b)$ be an interval with $b-a>\pi/k$ and $\widehat{I}=(a+\pi/2k,b-\pi/2k)$.
			      Let ${\psi}^*(\xi,Y)\in {\rm Exp}(k,S(\widehat{I})\times \Omega)$ such that
			       $\xi^{k-1}\psi^*(\xi,Y)$ is holomorphic at $\xi=0$.  Let 
			  \begin{equation}
			 	  \psi(t,Y)=\int_0^{\infty e^{i\theta}}
			 	  e^{-(\frac{\xi}{t})^k}{\psi}^*(\xi,Y)  d\xi^{k},  \;\theta \in \widehat{I}.   
			 \end{equation}
			  Then $\psi(t,Y)\in {\mathscr O}_{\{1/k\}}(S_{0}(I_{\epsilon})\times \Omega)$ for any $\epsilon>0$, 
			  $\psi(0,Y)=0$ 
				and $\widehat{\psi}(\xi,Y)={\psi}^*(\xi,Y)$.  In order to construct  
			  $\psi(t,Y)\in {\mathscr O}_{\{1/k\}}(S_{0}(I_{\epsilon})\times \Omega)$ in the present paper, 
			  we construct ${\psi}^*(\xi,Y)$ with the above property.     
	   	\end{rem}
	\section{\normalsize  Cauchy Problem in Borel summable function  spaces} %%%% p7
			First we assume  $f_{i}(t,x,U,p)	\in \mathscr{O}_{\{1/k\}}(S_0(I)\times \Omega)$, that is, 
                    $\{f_{i}(t,x,U,p)\}_{i=1}^{m}$ are $k$-Borel summable. 
		  	  The aim of sections 2 and 3 is to show a solution  $U(t,x)\in \mathscr{O}_{\{1/k\}}(S_0(I')\times \omega)$,
		\;$ I'\subset I, \omega\subset \Omega$.  In other word  a formal solution $\widetilde{U}(t,x)$ (see \eqref{fs}) 
		is $k$-Borel summable.  
			   	The methods of these sections (Borel summability case) are applicable to study mutisummability 
			   	case in the present paper.
 			\subsection{\normalsize Preliminary}
	 	 		 Let us introduce some notations. 
	  	   Let $\Delta=\{(i,j)\in {\mathbb N}^2; 1\leq i \leq m, 1\leq j \leq n\}$, 
	  	   ${\mathbb N}=\{0,1,2,\cdots\}$, $x=(x_1,\dots,x_n)$,	$U=(u_1,u_2,\dots,u_m)$  
				 and $P=\big(p_{i.j}; (i,j)\in \Delta \big)$. Norms are defined by $|x|=\max_{1\leq i \leq n} |x_i|$,
				 $|U|=\max_{1\leq i \leq m} |u_i|$ and 
				 $|P|=\max_{(i,j)\in \Delta} |p_{i,j}|$ respectively. 
				 For $\alpha=(\alpha_1,\alpha_2, \dots,\alpha_m)\in {\mathbb N}^m$, $|\alpha|=\sum_{i=1}^m \alpha_i$ 
				 and $U^{\alpha}=u_1^{\alpha_1}\dots u_m^{\alpha_m}$. Let  
				 $A=\big(A_{i,j}; (i,j)\in \Delta\big)\in \mathbb{N}^{mn}$,  
				 $|A|=\sum_{(i,j)\in \Delta}A_{i,j}$ and $ P^{A}=\prod_{(i,j)\in \Delta} p_{i,j}^{A_{i,j}}$.  
		     Let $I=(-\delta,\delta)$ with $\delta>\pi/2k$, $\Omega_0=\{x \in {\mathbb C}^{n}; |x|<R_0\}$,  
		      $\Omega'=\{(U,P); |U|, |P|<R_1 \}$ and $\Omega=\Omega_0\times \Omega'$. 
		      By reducing the initial values 
	       $\{u_i(0,x)\}_{i=1}^{m}$ to 0, we study the following Cauchy problem (initial value problem)
	        of a system of nonlinear partial differential equations in a domain $S_0(I)\times \Omega_0$, 
	   \begin{equation} {\rm (CP)}\quad \left \{
			\begin{aligned} \
				 & \partial_t u_i=f_i\big(t,x, U, \nabla_xU \big) \quad u_i(0,x)= 0 \quad 1 \leq i \leq m, \\
				 & U=(u_1, \cdots,u_m)\quad  \nabla_x U =\{\partial_{x_j}u_i\}_{1\leq i \leq m, 1\leq j \leq n}.
			\end{aligned} \right .\label{CP} 
		\end{equation}
		  As mentioned in Introduction the unique existence of solution follows from \cite{Nagumo}. As for 
		  asymptotic expansion we get 
		\begin{thm}
			Suppose $f_i\big(t,x,U,P \big)\in \mathscr{O}_{\{1/k\}}(S_0(I) \times \Omega)$ $(1\leq i \leq m)$.
			Let $U(t,x)=(u_1(t,x),\dots,u_m(t,x))$ be a solution of {\rm (CP)}. Then 
			there exists a neighborhood $\omega$ of $x=0$ such that  
			$u_i(t,x)\in \mathscr{O}_{\{1/k\}}(S_0(I_\epsilon) \times \omega)$ $(1\leq i \leq m)$. 
			for any small $\epsilon>0$. \label{th-1}
		\end{thm}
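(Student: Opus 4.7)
The plan is to transfer the entire Cauchy problem to the Borel plane via the $k$-Borel transform, so that the time-integration intrinsic in $\partial_t u_i = f_i$ and the products inherent in the nonlinearity $f_i$ both turn into tractable operations on holomorphic functions of $\xi$. Writing $\widehat{u}_i(\xi,x) := (\mathscr{B}_{k,\theta} u_i)(\xi,x)$, the identity \eqref{prod} shows that products of Laplace images correspond to $k$-convolutions, so after expanding
\begin{equation*}
 f_i(t,x,U,P) = \sum_{\alpha,A} f_{i,\alpha,A}(t,x)\, U^{\alpha} P^{A},
\end{equation*}
(which converges because $f_i$ is holomorphic in $(U,P)\in\Omega'$), each coefficient $f_{i,\alpha,A}(t,x)$ is $k$-Borel summable in $t$ by hypothesis, and the original system becomes a convolution system for $\widehat{U}=(\widehat{u}_1,\dots,\widehat{u}_m)$ of schematic form
\begin{equation*}
 \xi^{k}\,\widehat{u}_i(\xi,x) \;=\; \sum_{\alpha,A} \widehat{f}_{i,\alpha,A}(\xi,x)\,\underset{k}{*}\,\widehat{U}^{\underset{k}{*}\alpha}\,\underset{k}{*}\,(\nabla_x\widehat{U})^{\underset{k}{*}A},
\end{equation*}
where $\underset{k}{*}\alpha$ denotes the $|\alpha|$-fold $k$-convolution. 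A formal power series solution of this system exists and is unique, and coincides with $\widetilde{\mathscr{B}}_k \widetilde{u}_i$; the whole task is to prove it converges in a sector with the exponential growth demanded by Definition~0.1.

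Next I would set up a Nagumo-type majorant/iteration scheme in the Borel plane. The natural function class is ${\rm Exp}(k, S(\widehat{I}_\epsilon)\times \omega)$ with quantitative bounds of the shape
\begin{equation*}
 |\widehat{u}_i(\xi,x)| \;\le\; \frac{C\,|\xi|^{s-k}\,e^{c|\xi|^{k}}}{\Gamma(s/k)\,(R-|x|)^{\ell}},
\end{equation*}
the $\xi$-factor being stable under $k$-convolution by Lemma~\ref{conv}, and the $(R-|x|)^{-\ell}$ factor being the standard Nagumo weight needed to absorb the losses produced by $\nabla_x$. Because $\widehat{f}_{i,\alpha,A}(\xi,x)$ is holomorphic on $S(\widehat{I}_\epsilon)\cup\{|\xi|<r\}$ with the exponential estimate \eqref{1.11}, and because Lemma~\ref{lem1.6} gives geometric control $R_1^{-|\alpha|-|A|}$ of the coefficients $\widehat{f}_{i,\alpha,A}$ in the $(U,P)$-expansion, one can hope to run a fixed-point/Picard iteration whose $N$-th iterate is dominated by a universal majorant series in the style of the classical proof of Cauchy-Kowalevskaja, but with convolutions replacing ordinary products and Gamma-factors replacing factorials.

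The hard part, and the reason the paper devotes an entire section with subsubsections \emph{Majorant functions}, \emph{Estimate}, \emph{Convergence} to it, is to reconcile two competing scales in a single bound: the exponential growth $e^{c|\xi|^{k}}$ of the convolution products (whose constant $c$ can a priori bloat with each iteration) and the loss of $x$-regularity caused by $\nabla_x U$ (which is the classical Cauchy–Kowalevskaja obstacle). One must choose a weight of the form $(R-|x|-\lambda|\xi|^{k})^{-\ell}$ or some analogous joint majorant in $(\xi,x)$ so that a single $\partial_{x_j}$ can be traded for either a loss in $(R-|x|)$ or a gain in convolution Gamma-factors, and so that the resulting fixed-point map is a contraction on a small enough domain $\omega\subset\Omega_0$ after choosing $\lambda$ and $\ell$ appropriately. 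Once such a joint majorant is established the iteration converges, producing $\widehat{u}_i \in {\rm Exp}(k, S(\widehat{I}_\epsilon)\times\omega)$ with $\xi^{k-1}\widehat{u}_i$ holomorphic at $\xi=0$.

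Finally I would invoke the remark after Proposition~\ref{prop1.5}: applying $\mathscr{L}_{k,\theta}$ to the constructed $\widehat{u}_i$ yields $u_i \in \mathscr{O}_{\{1/k\}}(S_0(I_\epsilon)\times\omega)$ with the prescribed asymptotic expansion, and the uniqueness part of the Nagumo theorem forces this $u_i$ to be the solution of (CP). This gives the claimed $k$-Borel summability of the formal solution $\widetilde{U}(t,x)$, completing Theorem~\ref{th-1}.
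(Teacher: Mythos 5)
Your outline follows the same architecture as the paper --- pass to the Borel plane, turn the equation into a convolution system, estimate, and Laplace back --- but it stops short of the proof at exactly the point where the work is. You explicitly label the central estimate ``the hard part'' and offer only a candidate weight $(R-|x|-\lambda|\xi|^{k})^{-\ell}$ that ``one must choose \ldots appropriately'' so that a fixed-point map becomes a contraction; nothing is verified, so the claim that the iteration converges in ${\rm Exp}(k,S(\widehat{I}_\epsilon)\times\omega)$ is unsupported. Two specific ingredients that make the estimate closable in the paper are missing from your plan. First, the normalization: after reducing to $u_i(t,x)=O(t^2)$ one sets $u_i=tv_i$ and $g_i(t,x,V,P):=f_i(t,x,tV,tP)$, so that $g_{i,\alpha,A}(t,x)=t^{|\alpha|+|A|}f_{i,\alpha,A}(t,x)$. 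This does two things at once: it replaces $\partial_t$ by $\partial_t(t\,\cdot)$, whose Borel image $(\xi\partial_\xi+k+1)$ is inverted by $\xi^{-k-1}\int_0^\xi\eta^k(\cdot)\,d\eta$ with the clean gain $1/(p+1)$ of Lemma \ref{lem2.6}, and it endows each coefficient with the decisive factor $|\xi|^{|\alpha|+|A|}/\Gamma((|\alpha|+|A|)/k)$ of Lemma \ref{lem2.3}, which is what lets the sum over $(\alpha,A)$ and the recursion in the auxiliary parameter $\varepsilon$ close. Your schematic equation $\xi^k\widehat{u}_i=\cdots$ bypasses this and is not the correct Borel image of $\partial_t u_i=f_i$ in the paper's normalization. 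Second, the derivative loss from $\nabla_x U$ is not handled in the paper by Nagumo weights at all, but by Wagschal-type majorants $\Theta(X)=\theta(X/R)$ with $\theta'\theta'\ll\theta'$ (Lemma \ref{lem3.3}, in particular \eqref{eq3.7}), which trade each extra $\partial_{x_j}$ for one more derivative on $\Theta$ while keeping the factorial normalization $\Theta^{(\ell)}/\ell!$ aligned with the $\varepsilon$-order $\ell$; convergence of the resulting constants $M_{r,\ell}$ is then obtained not by a contraction argument but by comparison with the implicit-function equation $y_r=G_r(Y,z)$ (G\'erard--Tahara), as in Proposition \ref{prop-main}.

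A smaller point: your worry that the constant $c$ in $e^{c|\xi|^{k}}$ ``can a priori bloat with each iteration'' is unfounded --- Lemma \ref{conv} shows $k$-convolution preserves $c$ exactly while improving the power of $|\xi|$ by a Gamma factor --- so the only genuine obstacle is the loss of $x$-derivatives, and that is precisely the step your proposal leaves open. A Nagumo-norm route in the Borel plane (in the spirit of Luo--Chen--Zhang, cited in the paper) may well be viable, but as written your argument does not establish the theorem.
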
  
			Hence the unique solution $U(t,x)=(u_1(t,x),\dots,u_m(t,x))$ of (CP)   %%P8
			is $k$-Borel summable with asymptotic expansion $\widetilde{u}_i(t,x)$ 
			at $t=0$ in $S_0(I_\epsilon)\subset S_0(I) $.
		  We study \eqref{CP} in Borel summable functions space in detail in this section. 
		  Cauchy problem \eqref{CP} in multi-summable function spaces is studies in sections 4 and 5. 
			\par 
				We proceed to show Theorem \ref{th-1}. 
	  		We further transform \eqref{CP}. Let $u_i(t,x)=0+\partial_t u_i(0,x)t+ w_i(t,x),$ $w_i(t,x)=O(t^2)$. 
			We get a system of partial differential equations with unknown functions $(w_i,\dots,w_m)$. 
			We denote $w_i(t,x)$ by $u_i(t,x)$ and the right hand side of the equations by 
			$f_i\big(t,x, U, \nabla_xU \big)$ again. Then $u_i(0,x)=\partial_t u_i(0,x)= 0 \;(1 \leq i \leq m)$
			($u_i(t,x)=O(t^2)$) and 
		\begin{equation}
			\begin{aligned} \
				 & \partial_t u_i=f_i\big(t,x, U, \nabla_xU \big)\quad  f_i(0,x, 0, 0)=0 \quad 1\leq i \leq m.
			\end{aligned}  
		\end{equation}										
		 		Since  $u_i(t,x)=O(t^2)$, we can set $u_i(t,x)=tv_i(t,x)$ with $v_i(0,x)=0$ and have	
		 	\begin{equation}
		 		\begin{aligned}
		 		   \partial_t (t v_i)=f_i(t,x, tV, t \nabla_xV )\;\; V=(v_1,v_2,\dots,v_m). 
		 		\end{aligned} \label{CP-0}
		 	\end{equation}
		 	There exists a unique formal solution ${\widetilde V}=
		 		({\widetilde v}_1(t,x),\dots ,{\widetilde v}_m(t,x))\in
		 	{\mathscr O}(\Omega_0)[[t]]^m$ of \eqref{CP-0}
		 	\begin{equation}
		 		\begin{aligned}
		 		{\widetilde v}_i(t,x)=\sum_{n=1}^{\infty}v_{i,n}(x)t^n.
		 		\end{aligned} \label{fv}
		 	\end{equation}
		 	Let $g_i(t,x, V, P):=f_i(t,x, tV, tP)$.  By expansion of $f_i(t,x, U, P)$ at $(U,P)=(0,0)$, we have
		  \begin{equation} \left \{
					\begin{aligned} \ &
						f_i(t,x, U, P)= \sum_{(\alpha,A)\in {\mathbb N}^m\times {\mathbb N}^{mn}}f_{i,\alpha,A}(t,x) 
						U^{\alpha} P^{A}, \\ 
						& f_{i,\alpha,A}(t,x)\in \mathscr{O}_{\{1/k\}}(S_0(I)\times \Omega_0 ), \\
						&  g_i(t,x, V, P)=\sum_{(\alpha,A)\in {\mathbb N}^m\times {\mathbb N}^{mn}
						}g_{i,\alpha,A}(t,x)V^{\alpha}P^A, \\
						&  g_{i,\alpha,A}(t,x)=t^{|\alpha|+|A|}f_{i,\alpha,A}(t,x).
					\end{aligned}\right.
	  	\end{equation}			
		 	 It holds that $g_{i,0,0}(t,x,V,P)=f_i(t,x,0,0)$ and $g_{i,\alpha,A}(0,x)=0$.
		 \begin{lem}\; 
		 				$g_i(t,x, V, P)\in \mathscr{O}_{\{1/k\}}(S_0(I)\times \Omega)$ .
		 				\label{lem2.2}
		 	\end{lem}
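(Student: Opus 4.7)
The plan is to verify directly that the substitution $(U,P) \mapsto (tV, tP)$ preserves $k$-Borel summability, by reading off the asymptotic expansion of $g_i$ from that of $f_i$ and controlling the resulting remainders uniformly in $(V,P) \in \Omega'$.

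First, from the defining estimate
\begin{equation*}
  \Bigl| f_i(t,x,U,P) - \sum_{n=0}^{N-1} f_{i,n}(x,U,P) t^n \Bigr| \leq MC^N |t|^N \Gamma(N/k+1),
\end{equation*}
subtracting the bounds for $N$ and $N+1$ and letting $|t| \to 0$ extracts the uniform Gevrey bound $|f_{i,n}(x,U,P)| \leq M'C'^n \Gamma(n/k+1)$ on $\Omega$. I would then shrink the radius function $\rho(\cdot)$ of $S_0(I)$ so that $|t| < 1$ holds throughout; this guarantees $(tV,tP) \in \Omega'$ whenever $(V,P) \in \Omega'$, and substituting $(U,P) = (tV,tP)$ into the above estimate yields
\begin{equation*}
  g_i(t,x,V,P) = \sum_{n=0}^{N-1} f_{i,n}(x,tV,tP) t^n + R_N(t,x,tV,tP),
\end{equation*}
with $|R_N(t,x,tV,tP)| \leq MC^N |t|^N \Gamma(N/k+1)$.

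Next, I would expand each coefficient $f_{i,n}(x,tV,tP)$ as a Taylor series in $t$. For every $(V,P) \in \Omega'$ the map $t \mapsto f_{i,n}(x,tV,tP)$ is holomorphic on the disc $|t| < R_1/\max(|V|,|P|)$, which strictly contains $|t| \leq 1/2$; Cauchy's inequality on $|t| = 1/2$ combined with the Gevrey bound on $f_{i,n}$ yields $|f_{i,n}^{(j)}(x,V,P)| \leq M'C'^n \Gamma(n/k+1)\cdot 2^j$ for its Taylor coefficients, uniformly in $(V,P) \in \Omega'$. Regrouping $\sum_{n=0}^{N-1} \sum_{j=0}^{\infty} f_{i,n}^{(j)}(x,V,P) t^{n+j}$ by total degree $m = n+j$ then produces candidate asymptotic coefficients $g_{i,m}(x,V,P) = \sum_{n+j=m} f_{i,n}^{(j)}(x,V,P)$ for $g_i$.

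For the remainder estimate, the error splits as $R_N(t,x,tV,tP)$ plus $\sum_{n=0}^{N-1} t^n \sum_{j \geq N-n} f_{i,n}^{(j)}(x,V,P) t^j$. The first piece is already bounded by $MC^N|t|^N \Gamma(N/k+1)$; for the second, restricting to $|t| \leq 1/4$ and applying the Cauchy remainder formula on the disc of radius $1/2$ controls each inner tail by $2M'C'^n \Gamma(n/k+1)(2|t|)^{N-n}$. Summing over $n$, factoring out $|t|^N$, and absorbing $\Gamma(n/k+1)/\Gamma(N/k+1)$ together with a geometric summation in $(C'/2)^n$ delivers a bound of the form $M''\tilde{C}^N |t|^N \Gamma(N/k+1)$, which combined with the $R_N$ bound completes the proof. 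The main obstacle I anticipate is keeping these constants uniform in $(V,P) \in \Omega'$ all the way to its boundary, where the $t$-radius of convergence of $f_{i,n}(x,tV,tP)$ degenerates toward $1$; this is overcome by shrinking $\rho(\arg t)$ of $S_0(I)$ so that $|t|$ stays safely below this critical radius uniformly over $\Omega'$.
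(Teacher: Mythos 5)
Your proof is correct, but it takes a genuinely different route from the paper's. The paper proves this lemma in the Appendix (as the $p=1$ case of Lemma 6.2, via Lemma 6.1) entirely on the Borel-transform side: it expands $f_i$ in powers of $(U,P)$, writes $g_i=\sum_{\alpha,A}f_{i,\alpha,A}(t,x)\,t^{|\alpha|+|A|}V^{\alpha}P^{A}$, uses the fact that multiplication by $t^{|\alpha|+|A|}$ becomes $k$-convolution with $\xi^{|\alpha|+|A|-k}/\Gamma((|\alpha|+|A|)/k)$, bounds each term by Cauchy's inequality (Lemma 1.6) together with the convolution estimate (Lemma 1.7), and sums the series to get an exponential bound on $\widehat{g}_i$ in the infinite sector $S(\widehat I)$, from which summability follows by Laplace transform. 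You instead stay on the function side and verify Definition 1.1 directly: substitute, re-expand each coefficient $f_{i,n}(x,tV,tP)$ in $t$ via Cauchy's inequality on the auxiliary circle $|t|=1/2$, regroup by total degree, and estimate the tails geometrically for $|t|\le 1/4$. Both arguments are sound; your handling of the uniformity issue near $\partial\Omega'$ (working on $|t|=1/2$, which always lies strictly inside the disc of holomorphy of radius $R_1/\max(|V|,|P|)>1$) is the right fix, and it even yields the remainder estimate uniformly on all of $\Omega'$, whereas the paper's Lemma 6.1 only gives it for $|Z|<R'<R$. The trade-off is that the paper's computation simultaneously delivers the coefficient bounds \eqref{eq2.7} of Lemma 2.3 on $\widehat{g}_{i,\alpha,A}$ --- which are the actual input to the convolution-equation machinery of Sections 2--3 --- and carries over verbatim to the multisummable setting (Lemma 5.1, Proposition 5.3), where no direct remainder-estimate characterization is available; your more elementary argument proves Lemma 2.2 as stated but would still leave Lemma 2.3 to be established by essentially the paper's transform-side calculation. (One cosmetic point: in absorbing $\Gamma(n/k+1)/\Gamma(N/k+1)$ you should note that $\Gamma$ is not monotone near its minimum, so the quotient is bounded by a universal constant rather than by $1$; this changes nothing.)
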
	
		 	We refer Lemma \ref{lem2.2} to Appendix. Let
		 	\begin{equation}
		 		\begin{aligned}
		 		\widehat{g}_i(\xi,x, V, P):=(\mathscr{B}_{k,0}	g_i)(\xi,x, V, P))=
		 		\sum_{\alpha,A} \widehat{g}_{i,\alpha,A}(\xi,x)V^{\alpha}P^A.
		 		\end{aligned}
		 	\end{equation}
		 	\begin{lem} Let $\widehat{I}=(-\delta+\pi/2k, \delta-\pi/2k)$.
		 	  \begin{enumerate}
		 	   \item[{\rm (1)}] There exists $r>0$ such that 
			 	   $\xi^{k-1}\widehat{g}_{i,\alpha,A}(\xi,x)$ is holomorphic in 
			 	   $\{(\xi,x)\in  (S(\widehat{I})\cup \{|\xi|<r\})\times \Omega_0 \}$
			 	\item[{\rm (2)}]
				 	  For any $\epsilon>0$ there exist constants $G, C,c>0$ 
 			     such that for $(\xi,x)\in  \big(S(\widehat{I}_\epsilon)\cup \{0<|\xi|<r\}\big)\times \Omega_0 $
			 	\begin{equation} \left \{
				  \begin{aligned}
				  	\ & |\widehat{g}_{i,0,0}(\xi,x)|\leq G\frac{|\xi|^{1-k}e^{c|\xi|^{k}}}
							{\Gamma(\frac{1}{k})} \\
					 	 &	|\widehat{g}_{i,\alpha,A}(\xi,x)|
					 		\leq G C^{|\alpha|+|A|}\frac{|\xi|^{|\alpha|+|A|-k}e^{c|\xi|^{k}}}
							{\Gamma(\frac{|\alpha|+|A|}{k})} \quad |\alpha|+|A|\geq 1.
					\end{aligned}\right .\label{eq2.7}
				\end{equation}
		 	 \end{enumerate}
		 				\label{lem2.3}
		 	\end{lem}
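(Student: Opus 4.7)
The plan is to reduce to Proposition \ref{prop1.5} and Lemma \ref{conv}, extracting coefficients via Cauchy's inequality in $(U,P)$. A naive route---applying Proposition \ref{prop1.5} to the full $\widehat g_i$ and then Cauchy in $(V,P)$---would only give $|\xi|^{1-k}/\Gamma(1/k)$, missing the sharp form $|\xi|^{n_0-k}/\Gamma(n_0/k)$ (with $n_0:=|\alpha|+|A|$) required in (\ref{eq2.7}). This sharper form is recovered from the structural identity $g_{i,\alpha,A}(t,x)=t^{n_0}f_{i,\alpha,A}(t,x)$ combined with the product-convolution duality (\ref{prod}).

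For $(\alpha,A)=(0,0)$, $g_{i,0,0}(t,x)=f_i(t,x,0,0)$ vanishes at $t=0$ by the reduction $f_i(0,x,0,0)=0$ made earlier in the setup; Proposition \ref{prop1.5} applied directly yields both (1) and the first estimate of (\ref{eq2.7}) in this case.

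For $n_0\geq 1$, I would split $f_{i,\alpha,A}(t,x)=f_{i,\alpha,A}(0,x)+\tilde f_{i,\alpha,A}(t,x)$ with $\tilde f_{i,\alpha,A}(0,x)=0$, giving
\[
g_{i,\alpha,A}(t,x)=f_{i,\alpha,A}(0,x)\,t^{n_0}+t^{n_0}\tilde f_{i,\alpha,A}(t,x).
\]
Cauchy's inequality on $|U|,|P|<R_1$ transfers the $k$-Borel summability bounds on $f_i$ to $f_{i,\alpha,A}$ with a factor $R_1^{-n_0}$: in particular $|f_{i,\alpha,A}(0,x)|\leq M_0R_1^{-n_0}$, and Proposition \ref{prop1.5} applied to $\tilde f_{i,\alpha,A}$ gives $|\widehat{\tilde f}_{i,\alpha,A}(\xi,x)|\leq (M'/R_1^{n_0})|\xi|^{1-k}e^{c|\xi|^k}/\Gamma(1/k)$ on $(S(\widehat I_\epsilon)\cup\{0<|\xi|<r\})\times\Omega_0$, together with holomorphicity of $\xi^{k-1}\widehat{\tilde f}_{i,\alpha,A}$ near $\xi=0$. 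Since both $t^{n_0}$ and $\tilde f_{i,\alpha,A}$ vanish at $t=0$, and $\mathscr{B}_k t^{n_0}=\xi^{n_0-k}/\Gamma(n_0/k)$, the identity (\ref{prod}) applied to $t^{n_0}\cdot\tilde f_{i,\alpha,A}$ produces
\[
\widehat g_{i,\alpha,A}(\xi,x)=\frac{f_{i,\alpha,A}(0,x)}{\Gamma(n_0/k)}\xi^{n_0-k}+\Bigl(\frac{\xi^{n_0-k}}{\Gamma(n_0/k)}\underset{k}{*}\widehat{\tilde f}_{i,\alpha,A}\Bigr)(\xi,x),
\]
and Lemma \ref{conv} with $s_1=n_0$, $s_2=1$ bounds the convolution by $(M'/R_1^{n_0})|\xi|^{n_0+1-k}e^{c|\xi|^k}/\Gamma((n_0+1)/k)$.

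Combining the two summands into the form (\ref{eq2.7}) amounts to absorbing the extra $|\xi|$ into the exponential (using $|\xi|\leq C_\varepsilon e^{\varepsilon|\xi|^k}$, at the cost of slightly enlarging $c$) and noting that $\Gamma(n_0/k)/\Gamma((n_0+1)/k)$ is uniformly bounded for $n_0\geq 1$; the factor $C^{n_0}$ in (\ref{eq2.7}) comes from $R_1^{-n_0}$. Part (1) then follows because $\xi^{k-1}\xi^{n_0-k}=\xi^{n_0-1}$ is entire in $\xi$ for $n_0\geq 1$, Proposition \ref{prop1.5} gives holomorphicity of $\xi^{k-1}\widehat{\tilde f}_{i,\alpha,A}$ near $\xi=0$ and on $S(\widehat I)$, and $k$-convolution preserves these properties. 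The principal technical hurdle is exactly this matching of the sharp estimate form; the convolution identity is the mechanism that raises the vanishing order of the Borel transform at $\xi=0$ by $n_0$, thereby upgrading $|\xi|^{1-k}/\Gamma(1/k)$ to $|\xi|^{n_0-k}/\Gamma(n_0/k)$.
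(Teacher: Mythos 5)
Your proposal is correct and follows essentially the same route as the paper's Appendix proof (Lemmas \ref{lem6-1}--\ref{lem6.2} and Proposition \ref{prop5.3} specialized to $p=1$): Cauchy's inequality in $(U,P)$ supplies the geometric factor $R_1^{-|\alpha|-|A|}$, the product--convolution identity \eqref{prod} turns $t^{|\alpha|+|A|}f_{i,\alpha,A}$ into $\frac{\xi^{|\alpha|+|A|-k}}{\Gamma((|\alpha|+|A|)/k)}\underset{k}{*}\widehat{f}_{i,\alpha,A}$, and Lemma \ref{conv} with $s_1=|\alpha|+|A|$, $s_2=1$ gives the bound, after which the extra power of $|\xi|$ is absorbed into the exponential. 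The only (harmless) organizational difference is that you split off the constant term $f_{i,\alpha,A}(0,x)$ coefficientwise, whereas the paper performs the reduction to $\widetilde f(0,x,Z)=0$ globally and treats $\widetilde f(0,x,tZ)$ separately.
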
	
		 	The details of Lemmas \ref{lem2.2} and \ref{lem2.3} are given in Appendix. 
			 	Consequently we study the following equation
			\begin{equation}%%\tag{CP-1}
				\begin{aligned} \
				 & \partial_t (t v_i)=
				 \sum_{\ell=0}^{\infty}\big(\sum_{\begin{subarray}\ (\alpha,A)\\ |\alpha|+|A|=\ell
				 \end{subarray}}g_{i,\alpha,A}(t,x) 
				  V^{\alpha} (\nabla_x V)^{A}\big) \;\; 1\leq i \leq m.
				\end{aligned}  \label{CP-1}
			\end{equation}
  			  We give a formula for calculations in the following sections.
		\begin{lem} Let $\widehat{J}=(a,b)$,\; $c(\xi)\in \mathscr{O}(S(\widehat{J}))$ with 
			$|c(\xi)|\leq M|\xi|^{s-k}e^{c|\xi|^{k}} (s>0)$ and   
			\begin{equation*}
				\begin{aligned}
				 w(t)=\int_{0}^{\infty} e^{-(\frac{\xi}{t})^{k}}c(\xi)d\xi^{k}\;\; \xi \in S(\widehat{J}).
				\end{aligned}
			\end{equation*}
			Then $w(t)\in \mathscr{O}(S_{0}(J))$ ${J}=(a-\frac{\pi}{2k},b+\frac{\pi}{2k})$ and 
		\begin{equation}			\begin{aligned}
			 (tw(t))'=\int_{0}^{\infty}e^{-(\frac{\xi}{t})^{k}}
			 \big(\xi \frac{d}{d\xi}+k+1\big)c(\xi)d\xi^{k} .
			\end{aligned}
		\end{equation}	
			\label{lem2.4}
		\end{lem}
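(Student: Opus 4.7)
The statement has two parts: (i) the holomorphy of $w(t)$ on $S_0(J)$ with $J=(a-\pi/(2k),b+\pi/(2k))$, and (ii) the formula for $(tw(t))'$. My plan is to obtain (i) by a standard rotation-of-contour argument for the $k$-Laplace integral and to obtain (ii) by differentiating under the integral and performing a single integration by parts in $\xi$.

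For (i), fix $\theta\in\widehat{J}=(a,b)$ and consider the ray $\arg\xi=\theta$. On $\{\arg t\in (\theta-\pi/(2k),\theta+\pi/(2k))\}$ with $|t|$ small enough that $c|t|^{k}<\cos(k(\arg t-\theta))$, the bound $|c(\xi)|\le M|\xi|^{s-k}e^{c|\xi|^{k}}$ combined with the exponential decay of $e^{-(\xi/t)^{k}}$ makes the integral absolutely convergent and, via a standard dominated-convergence argument, holomorphic in $t$. By Cauchy's theorem together with the growth bound on $c(\xi)$ (which permits rotating the ray without picking up a contribution at $|\xi|=\infty$), the resulting function is independent of $\theta$ and analytically continues over all of $S_0(J)$.

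For (ii), I differentiate under the integral. Since
\begin{equation*}
\partial_t e^{-(\xi/t)^k}=\frac{k\xi^k}{t^{k+1}}e^{-(\xi/t)^k},
\end{equation*}
the estimate $|c(\xi)|\le M|\xi|^{s-k}e^{c|\xi|^{k}}$ (with $s>0$) makes the differentiated integrand locally uniformly integrable, so
\begin{equation*}
tw'(t)=\int_{0}^{\infty}e^{-(\xi/t)^{k}}\,k(\xi/t)^{k}c(\xi)\,d\xi^{k}.
\end{equation*}
Using $d\xi^{k}=k\xi^{k-1}d\xi$, the key step is integration by parts on
\begin{equation*}
\int_{0}^{\infty}e^{-(\xi/t)^{k}}\,\xi\,c'(\xi)\,d\xi^{k}=k\int_{0}^{\infty}e^{-(\xi/t)^{k}}\xi^{k}c'(\xi)\,d\xi,
\end{equation*}
with $u=\xi^{k}e^{-(\xi/t)^{k}}$ and $dv=c'(\xi)d\xi$. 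The derivative $u'=k\xi^{k-1}e^{-(\xi/t)^{k}}(1-(\xi/t)^{k})d\xi$ produces exactly $-kw(t)+tw'(t)$ after multiplying by $k$. Adding the $(k+1)c(\xi)$ contribution then yields $(k+1)w(t)-kw(t)+tw'(t)=w(t)+tw'(t)=(tw(t))'$, which is the desired formula.

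The main obstacle is controlling the boundary terms in the integration by parts: at $\xi=0$ one needs $\xi^{k}c(\xi)\to 0$, which is where the hypothesis $s>0$ enters (giving $|\xi^{k}c(\xi)|\le M|\xi|^{s}\to 0$); at $\xi\to\infty$ along the integration ray one needs the exponential $e^{-(\xi/t)^{k}}$ to dominate the subexponential factor $\xi^{k}|c(\xi)|\le M|\xi|^{s}e^{c|\xi|^{k}}$, which holds precisely on the subdomain of $S_0(J)$ chosen above and is then propagated by analytic continuation. Once both boundary contributions are shown to vanish on this subdomain, both sides of the asserted identity are holomorphic on $S_0(J)$ and agree there by uniqueness.
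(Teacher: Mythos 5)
Your proposal is correct and follows essentially the same route as the paper: holomorphy by rotating the integration ray, then differentiation under the integral followed by a single integration by parts whose boundary terms vanish thanks to $s>0$ at $\xi=0$ and the dominance of $e^{-(\xi/t)^{k}}$ at infinity. The only cosmetic difference is that the paper first substitutes $\eta=\xi^{k}$ before integrating by parts, whereas you work directly in the $\xi$ variable; the underlying identity is the same.
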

		\begin{proof} By changing integral path in $S(\widehat{J})$, we have $w(t)\in \mathscr{O}(S_{0}(J))$
			and
			\begin{equation*}
				\begin{aligned}\ &
				 (tw(t))'=\int_{0}^{\infty} e^{-(\frac{\xi}{t})^{k}}\big(k(\frac{\xi}{t})^{k}+1\big)
				 c(\xi)d\xi^{k}= \int_{0}^{\infty}e^{-\frac{\eta}{t^{k}}}
				 \big(\frac{k\eta}{t^{k}}+1\big)c(\eta^{1/k})d\eta \\
				  = &  \int_{0}^{\infty}e^{-\frac{\eta}{t^{k}}}\Big(k
				 \big(\eta c(\eta^{1/k})\big)'+ c(\eta^{1/k})\Big)d\eta 
				 =\int_{0}^{\infty}e^{-(\frac{\xi}{t})^{k}}
				 \big(\xi \frac{d}{d\xi}+k+1\big) c(\xi)d\xi^{k}. 
				\end{aligned}
			\end{equation*}
		\end{proof}
	\subsection{\normalsize Convolution equation}  %%P10 9/20
   		  		Let us proceed to study \eqref{CP-1} with unknowns $V=(v_1,v_2,\dots.v_m)$ in function space 
	 		 		$\mathscr{O}_{\{1/k\}}(S_0(I)\times{\Omega_0})$ $I=(-\delta,\delta)$. 
		      We change \eqref{CP-1} to a system of convolution equations. For convolution product 
		      we use a notation  
   	\begin{equation}
			\begin{aligned} &
			 \prod_{1\leq i\leq N}^*\phi_i= \phi_1	\underset{k}{*}\phi_2\underset{k}{*}\cdots \cdots 
			 \underset{k}{*}\phi_N.
			\end{aligned}
		\end{equation} 
       Let $\widehat{I}=(-\delta+\pi/2k,\delta-\pi/2k)$. We represent $v_r(t,x)$ by Laplace integral
        \begin{equation}
    	\begin{aligned}
	    	v_r(t,x): %%=	{\frak L}_{k,\theta}\widehat{v}_{i}
	    	=\int_{0}^{\infty e^{i\theta}}e^{-(\frac{\xi}{t})^{k}}\widehat{v}_{r}(\xi,x)d\xi^{k}\quad
	    	 \xi \in S(\widehat{I}).
    	\end{aligned}\label{vr}
    \end{equation}   
	  We transform \eqref{CP-1} to a system of convolution equations   
	\begin{equation}%%\tag{Conv}
		\begin{aligned} \ & (\xi \partial_{\xi}+k+1)\widehat{v}_r(\xi,x)\\
		 = &\sum_{\ell=0}^{\infty}\big(\sum_{|\alpha|+|A|=\ell}\widehat{g}_{r,\alpha,A}(\xi,x)
		 \underset{k}{*} \widehat{V}^{*\alpha}\underset{k}{*} 
		  (\nabla_x \widehat{V})^{*A}\big) %%+ \widehat{g}_{i,0,0}(\xi,x) 
		  \quad 1\leq r\leq m,
		\end{aligned}\label{Ceq}
	\end{equation}
		 by \eqref{prod} and Lemma \ref{lem2.4},  where
				\begin{equation*}
			{g}_{r,\alpha,A}(t,x)=\int_0^{\infty}e^{-(\frac{\xi}{t})^{k}}
			\widehat{g}_{r,\alpha,A}(\xi,x)d\xi^{k}, \quad 
			\widehat{g}_{r,\alpha,A}(\xi,x)=({\mathscr{B}}_k {g}_{r,\alpha,A})(\xi,x),
			\end{equation*}
		\begin{equation*}
			\begin{aligned} 
			  \widehat{V}^{*\alpha}=\widehat{v}_1^{*\alpha_1}*\cdots *\widehat{v}_m^{*\alpha_m}, \quad
			 \widehat{v}_i^{*\alpha_i}=\overset{\alpha_i}{\overbrace{\widehat{v}_i\underset{k}{*}
			    \cdots \cdots  \underset{k}{*}\widehat{v_i}}},			   
		\end{aligned}
		\end{equation*}    
		\begin{equation*}
			\begin{aligned} &
			 (\nabla_x \widehat{V})^{*A}=\prod_{(i,j)\in \Delta}^*\overset{A_{i,j}}{\overbrace{\partial_{x_j} v_i
			\underset{k}{*}\cdots \cdots \underset{k}{*}\partial_{x_j} v_i}}.
								\end{aligned}
		\end{equation*}  
	    The solution $V(t,x)=(v_1(t,x), \dots,v_m(t,x))$ of \eqref{CP-1}  has asymptotic expansion
		$\widetilde{V}(t,x)=(\widetilde{v}_1(t,x), \dots,\widetilde{v}_m(t,x))$,  $ \widetilde{v}_i(t,x)=\sum_{n=1}^{\infty}
		v_n(x)t^n \in {\mathscr O}(U)[[t]]$. 
		 Our purpose is to solve \eqref{Ceq} and get  $(\widehat{v}_1(\xi,x).\dots,\widehat{v}_m(\xi,x))$.  
		For this purpose  we introduce an auxiliary parameter $\varepsilon$ in order to clarify the process of
		construction of $(\widehat{v}_1(\xi,x).\dots,\widehat{v}_m(\xi,x))$,		
	\begin{equation}
		\begin{aligned} \ & (\xi \partial_{\xi}+k+1)\widehat{v}_r(\xi,x)\\
		 	= &  \sum_{\ell=1}^{\infty}\varepsilon^{\ell}
				 \big(\sum_{|\alpha|+|A|=\ell}\widehat{g}_{r,\alpha,A}(\xi,x)\underset{k}{*}  
			  \widehat{V}^{{*} \alpha}\underset{k}{*}  (\nabla_x \widehat{V})^{* A}\big)
			  +\varepsilon\widehat{g}_{r,0,0}(\xi,x).
		\end{aligned}\label{Conv-e}
	\end{equation}
		  If $\varepsilon=1$, we get \eqref{Ceq}.
   		Let $\widehat{v}_r(\xi,x,\varepsilon)=\sum_{\ell=1}^{\infty} \widehat{v}_{r,\ell}(\xi,x)
  	 	\varepsilon^{\ell}$ and   
 	  	$\widehat{V}(\xi,x,\varepsilon)
	   	=(\widehat{v}_1(\xi,x,\varepsilon),\dots,\widehat{v}_m(\xi,x,\varepsilon))$. 
	   	By substituting ${\widehat{V}=\widehat{V}(\xi,x,\varepsilon)}$ into the right hand side of
	   	\eqref{Conv-e}, we have 
   	\begin{equation}
   		\begin{aligned}
   		 & \sum_{\ell=1}^{\infty}\varepsilon^{\ell}
				\Big[ \Big(\sum_{|\alpha|+|A|=\ell}\widehat{g}_{r,\alpha,A}(\xi,x)\underset{k}{*}  
			  \widehat{V}^{* \alpha}\underset{k}{*}  (\nabla_x \widehat{V})^{* A}\Big)
			  \Big]_{\widehat{V}=\widehat{V}(\xi,x,\varepsilon)}+	\varepsilon\widehat{g}_{r,0,0}(\xi,x) \\
			  &=\sum_{\ell=1}^{\infty}{\mathcal G}_{r,\ell}(\xi,x) \varepsilon^{\ell}, \quad 
			  {\mathcal G}_{r,1}(\xi,x):=\widehat{g}_{r,0,0}(\xi,x)=\widehat{g}_r(\xi,x,0,0)
   		\end{aligned} \label{Conv-e'}
   	\end{equation}
   	and 
   	\begin{equation}
   		\begin{aligned}
   		(\xi\partial_{\xi}+k+1)\widehat{v}_{r,\ell}(\xi,x)={\mathcal G}_{r,\ell}(\xi,x)\quad 
   		 \ell=1,2,\dots 
   		  		\end{aligned}
   	\end{equation}
	   	for $1 \leq r \leq m$. 
	   	 ${\mathcal G}_{r,\ell}(\xi,x)$  is determined by terms 
		 $\{\widehat{v}_{i,j}(\xi,x); 1\leq i\leq m,j<\ell\} $ and is given in detail 
		 in the next section 2.2.1.
   	\subsubsection{\normalsize Calculation of ${\mathcal G}_{r,\ell}(\xi,x)$ }
  	 	We calculate ${\mathcal G}_{r,\ell}(\xi,x)$ more detail. 
	   	We have 
	 \begin{equation*}
	 	\begin{aligned} \ 
		  &	 \widehat{v_i}^{*\alpha_i}=\overset{\alpha_i}{ \overbrace{ \widehat{v}_i\underset{k}{*} \widehat{v}_i
  		 		 \underset{k}{*} \cdots \underset{k}{*} \widehat{v}_i}}\\ = &	 	
			 	 	\big(\sum_{\ell_{i}(1)=1}^{\infty} \widehat{v}_{i,\ell_{i}(1)}(\xi,x)\varepsilon^{\ell_{i}(1)}\big)
		 	 		\underset{k}{*}  	\big(\sum_{\ell_i(2)=1}^{\infty} \widehat{v}_{i,\ell_i(2)}(\xi,x)
		 	 		\varepsilon^{\ell_i(2)}\big)\underset{k}{*} \cdots \underset{k}{*} 
		 			\big(\sum_{\ell_i({\alpha_i})=1}^{\infty} \widehat{v}_{i.\ell_i({\alpha_i})}(\xi,x)
		 			\varepsilon^{\ell_i({\alpha_i})}\big) \\
		 	= & \sum_{\ell'=\alpha_i}^{\infty} \varepsilon^{\ell'}\big(
	 		 		 \sum_{ (\alpha_i,\ell')}v_{i.\ell_i(1)}(\xi,x)\underset{k}{*} v_{i,\ell_{i}(2)}(\xi,x)
	 		 		 \underset{k}{*}  \cdots 
	 		  	\underset{k}{*} v_{i,\ell_{i}(\alpha _i)}(\xi,x)\big),
	 	\end{aligned}
	 \end{equation*} 
		 where $\sum_{ (\alpha_i,\ell')}=\{(\ell_{i}(1),\ell_{i}(2),\dots,\ell_{i}(\alpha_i));\; 
	 		\sum_{s=1}^{\alpha_i}\ell_{i}(s)=\ell'\}$. Hence for $\alpha=(\alpha_1,\dots,\alpha_m)$
 	 \begin{equation}
	 	\begin{aligned}\ 
	 			&	{\widehat{V}}^{*\alpha}=\prod_{ \ 1\leq i \leq m }^*\widehat{v_i}^{*\alpha_{i}}=
			 		\prod_{ \ 1\leq i \leq m }^*
			 		\overset{\alpha_i}{ \overbrace{ \widehat{v}_i\underset{k}{*} \widehat{v}_i
				 	 \underset{k}{*} \cdots \underset{k}{*} \widehat{v}_i}}\\	 	
	 			& = \sum_{\ell'=|\alpha|}^{\infty} \varepsilon^{\ell'}
		 			\Big( \sum_{ (\alpha,\ell')} %%\\ &
				  \prod^*_{\begin{subarray} \ 1\leq i \leq m 
	 	\end{subarray}}
	 	 	v_{i,\ell_{i}(1)}(\xi,x)\underset{k}{*} v_{i,\ell_{i}(2)}(\xi,x)\underset{k}{*} 
	 	 \cdots 
	 		  \underset{k}{*} v_{i,\ell_{i}(\alpha_{i})}(\xi,x)\Big) \\
	 		  &  \sum_{ ({\alpha},\ell')}=\{(\ell_{i}(1),\ell_{i}(2),\cdots,\ell_{i}(\alpha_{i}))
	 		  \; 1\leq i \leq m; 
	 			  \sum_{i=1}^m
	 		\sum_{s=1}^{\alpha_i}\ell_{i}(s)=\ell'\}.	  
	 	\end{aligned}
	 \end{equation}	
	 	 We also get in the same way for $A=(A_{i,j};(i,j)\in \Delta)\in {\mathbb N}^{mn}$
	 	 \begin{equation*}
		 	\begin{aligned}\ &
		 	 (\partial_{x_j}\widehat{v_i})^{{*} A_{i,j}}=\overset{A_{i,j}} {\overbrace{
			  \partial_{x_j}\widehat{v}_{i}(\xi,x)\underset{k}{*}
		 	 \partial_{x_j}\widehat{v}_i(\xi,x)\underset{k}{*}  \cdots 
	 		  \underset{k}{*} \partial_{x_j}\widehat{v}_{i}(\xi,x)}},  \\
	 		& {\nabla _x \widehat{V}}^{*A}= \prod^*_{\begin{subarray} \ (i,j)\in \Delta 	\end{subarray}}
	 	 (\partial_{x_j}\widehat{v_i})^{{*} A_{i,j}}  
		 	\end{aligned} .
	 \end{equation*}	
	 	 and 
	 	\begin{equation*}
		 	\begin{aligned}\ &
		  \prod^*_{\begin{subarray} \ (i,j)\in \Delta 	\end{subarray}}
	 	 (\partial_{x_j}\widehat{v_i})^{* A_{i,j}}= 
		 	\sum_{\ell''=|A|}^{\infty} \varepsilon^{\ell''}
		 	\Big( \sum_{ (A,\ell'')} \\ &
			  \prod^*_{\begin{subarray} \ (i,j)\in \Delta	\end{subarray}}
			  \partial_{x_j}\widehat{v}_{i,\ell_{i,j}(1)}(\xi,x)\underset{k}{*}
		 	 \partial_{x_j}\widehat{v}_{i,\ell_{i,j}(2)}(\xi,x)\underset{k}{*}  \cdots 
	 		  \underset{k}{*} \partial_{x_j}\widehat{v}_{i,\ell_{i,j}(A_{i,j})}(\xi,x)\Big), \\
	 		  &  \sum_{ (A,\ell'')}=\{(\ell_{i,j}(1),\ell_{i,j}(2),\cdots,\ell_{i,j}(A_{i,j}))\; (i,j)\in \Delta 
	 		 ;  \sum_{ (i,j)\in \Delta 	}
	 		\sum_{s=1}^{A_{i,j}}\ell_{i,j}(s)=\ell''\}.	  
	 	\end{aligned}
	 \end{equation*}	 
	  Thus we get for $|\alpha|+|A|\geq 1$
		\begin{equation}
			\begin{aligned}\ &
				\varepsilon^{|\alpha|+|A|}
				\widehat{g}_{r,\alpha,A}(\xi,x)\underset{k}{*} 
			 \widehat{V}^{* \alpha}
				\underset{k}{*} {\nabla _x \widehat{V}}^{* A}=
				\widehat{g}_{r,\alpha,A}(\xi,x) \sum_{\ell \geq 2|\alpha|+|A|}\
				\varepsilon^{\ell}\bigg( \sum_{ \ell'+\ell''+|\alpha|+|A|=\ell} \\	
				& 	\underset{k}{*}	\Big ( \sum_{ ({\alpha},\ell')}
	 		\prod_{1\leq i\leq m}^*\big(\widehat{v}_{i,\ell_i(1)}(\xi,x)
	 		\underset{k}{*} \widehat{v}_{i.\ell_{i}(2)}(\xi,x)\underset{k}{*}  \cdots 
	 		  \underset{k}{*}\widehat{ v}_{i,\ell_{i}(\alpha_i)}(\xi,x)\big) \Big)\\
	 	& \underset{k}{*} 	\Big (\sum_{ (A,\ell'')} \prod_{\begin{subarray}
	 			 \ (i,j)\in \Delta 	\end{subarray}}^{*} \partial_{x_j}\widehat{v}_{i,\ell_{i,j}(1)}(\xi,x)
	 			\underset{k}{*} \partial_{x_j}\widehat{v}_{i.\ell_{i,j}(2)}(\xi,x)\underset{k}{*}  \cdots 
	 		   \underset{k}{*} \partial_{x_j}\widehat{v}_{i.\ell_{i,j}(A_{i,j})}(\xi,x)\Big)\bigg).   
			\end{aligned}\label{GL}
		\end{equation}
				${\mathcal G}_{r,\ell}(\xi,x)\; (\ell \geq 2)$ is a finite sum of terms 
		\begin{equation}
		\begin{aligned}\ & 
				\widehat{g}_{r,\alpha,A}(\xi,x)\underset{k}{*} \big (
			 	\prod_{1\leq i \leq m}^* \big(\widehat{v}_{i.\ell_i(1)}(\xi,x)\underset{k}{*}
			 	 \widehat{v}_{i.\ell_{i}(2)}(\xi,x)\underset{k}{*}  \cdots 
	 		  \underset{k}{*} \widehat{v}_{i.\ell_{i}(\alpha_i)}(\xi,x)\big) \big)\\
	 	& \underset{k}{*}	\Big ( \prod_{ (i,j)\in\Delta }^{*} \partial_{x_j}\widehat{v}_{i.\ell_{i,j}(1)}(\xi,x)
		 	\underset{k}{*} \partial_{x_j}\widehat{v}_{i.\ell_{i,j}(2)}(\xi,x)
		 	\underset{k}{*} \cdots  \underset{k}{*} \partial_{x_j}\widehat{v}_{i.\ell_{i,j}(A_{i,j})}(\xi,x)\Big) 
		\end{aligned}\label{GL-1}
	\end{equation}
			satisfying 
		\begin{equation}
			\begin{aligned}
			\sum_{i=1}^m\sum_{s=1}^{\alpha_i}\ell_i(s)+\sum_{\begin{subarray}\ (i,j) \in \Delta
			 \end{subarray}}\sum_{s=1}^{A_{i,j}}\ell_{i,j}(s)+|\alpha|+|A|=\ell.
						\end{aligned}\label{GL-2}
		\end{equation}
		 ${\mathcal G}_{r,\ell}(\xi,x)$ is determined by terms 
		 $\{\widehat{v}_{i,j}(\xi,x); 1\leq i\leq m,j<\ell\} $.
    Thus we get equations 
    	\begin{equation}
		\begin{aligned}\
		  & (\xi \partial_{\xi}+k+1)\widehat{v}_{r,\ell}(\xi,x)={\mathcal G}_{r,\ell}(\xi,x) %%\;\; \ell\geq 2.
		\end{aligned}
		\end{equation}
		and from ${\mathcal G}_{r,1}(\eta,x)=\widehat{g}_r(\xi,x,0,0)$ 
		\begin{equation}\left \{
			\begin{aligned}
					& \widehat{v}_{r,1}(\xi,x)=\xi^{-k-1}\int_{0}^{\xi} \eta^{k}\widehat{g}_r(\eta,x,0,0) d\eta, \\
			    &  \widehat{v}_{r,\ell}(\xi,x)=\xi^{-k-1}\int_{0}^{\xi} \eta^{k}{\mathcal G}_{r,\ell}(\eta,x) d\eta
			    \quad \ell\geq 2.
			\end{aligned}\right .
		\end{equation}
			$\widehat{v}_{r,\ell}(\xi,x)\; (\ell \geq 2)$ are successively determined and 
			we have from Lemma \ref{conv} and Lemma \ref{lem2.3}-(1) 
		\begin{prop}
				$\xi^{k-1}\widehat{v}_{r,\ell}(\xi,x)$ is holomorphic in
				 $ (S(\widehat{I})\cup\{|\xi|<r\})\times \Omega_0$.  \label{prop2.5}		  
		\end{prop}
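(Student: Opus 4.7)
The plan is induction on $\ell$, using the recursive formulas
\begin{equation*}
\widehat{v}_{r,1}(\xi,x) = \xi^{-k-1}\int_0^\xi \eta^k \widehat{g}_{r,0,0}(\eta,x)\,d\eta, \qquad
\widehat{v}_{r,\ell}(\xi,x) = \xi^{-k-1}\int_0^\xi \eta^k \mathcal{G}_{r,\ell}(\eta,x)\,d\eta \ (\ell\geq 2).
\end{equation*}
For the base case $\ell=1$, Lemma \ref{lem2.3} gives that $\xi^{k-1}\widehat{g}_{r,0,0}(\xi,x)$ is holomorphic on $\bigl(S(\widehat{I}) \cup \{|\xi|<r\}\bigr)\times \Omega_0$, so $\eta^k \widehat{g}_{r,0,0}(\eta,x) = \eta\bigl(\eta^{k-1}\widehat{g}_{r,0,0}(\eta,x)\bigr)$ is holomorphic and vanishes at $\eta=0$ to order $\geq 1$. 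Because this domain is star-shaped at the origin, the radial integral $\int_0^\xi \eta^k \widehat{g}_{r,0,0}\,d\eta$ is holomorphic in $\xi$ and vanishes to order $\geq 2$; dividing by $\xi^{k+1}$ therefore yields $\widehat{v}_{r,1}(\xi,x) = \xi^{1-k} H_{r,1}(\xi,x)$ with $H_{r,1}$ holomorphic on the same domain.

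For the inductive step at $\ell \geq 2$, assume the conclusion for every $\widehat{v}_{i,j}$ with $j<\ell$. By \eqref{GL}--\eqref{GL-2}, $\mathcal{G}_{r,\ell}$ is a finite sum of $k$-convolution products of one factor $\widehat{g}_{r,\alpha,A}$ together with $|\alpha|$ factors $\widehat{v}_{i,\ell_i(s)}$ and $|A|$ factors $\partial_{x_j}\widehat{v}_{i,\ell_{i,j}(s)}$, all secondary indices strictly less than $\ell$, and $|\alpha|+|A|\geq 1$ (the case $|\alpha|+|A|=0$ contributes only at $\ell=1$). Lemma \ref{lem2.3} furnishes $|\widehat{g}_{r,\alpha,A}(\xi,x)|\leq C|\xi|^{|\alpha|+|A|-k}e^{c|\xi|^{k}}$ on the domain, while the induction hypothesis (together with Cauchy's inequality in $x$ on a slightly shrunk polydisc to handle $\partial_{x_j}$) gives $|\widehat{v}_{i,\ell_i(s)}|,\ |\partial_{x_j}\widehat{v}_{i,\ell_{i,j}(s)}|\leq C'|\xi|^{1-k}$. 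Iterating Lemma \ref{conv} with $\kappa=k$ then shows that each convolution product is holomorphic on $\bigl(S(\widehat{I})\cup\{|\xi|<r\}\bigr)\times\Omega_0$ with $\xi$-exponent $s_{\mathrm{tot}}-k$ where $s_{\mathrm{tot}} = (|\alpha|+|A|)+|\alpha|+|A| = 2(|\alpha|+|A|)\geq 2$. Summing the finitely many terms, $\mathcal{G}_{r,\ell}$ is holomorphic on the domain and $\eta^k\mathcal{G}_{r,\ell}(\eta,x)$ vanishes at $\eta=0$ to order $\geq 2$; radial integration and division by $\xi^{k+1}$ deliver $\widehat{v}_{r,\ell}(\xi,x) = \xi^{1-k}H_{r,\ell}(\xi,x)$ with $H_{r,\ell}$ holomorphic on the claimed domain, closing the induction.

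The main obstacle is the bookkeeping of the orders of vanishing at $\xi=0$: one must verify that each convolution factor contributes $s\geq 1$, so that after multiplying by $\eta^k$, integrating from $0$, and dividing by $\xi^{k+1}$, the net exponent absorbs the $\xi^{k-1}$ multiplier. Two auxiliary points deserve care: (i) every convolution integral must be realized along a path inside $S(\widehat{I})\cup\{|\xi|<r\}$, which is legitimate because this domain is star-shaped at $0$, so radial paths suffice and the two pieces (sector and disk) patch via analytic continuation across their overlap; (ii) the $x$-derivative $\partial_{x_j}$ is absorbed by passing to a slightly smaller polydisc in $x$ and invoking Cauchy's inequality, so that $\partial_{x_j}\widehat{v}_{i,j}$ inherits the same $\xi^{1-k}$ bound as $\widehat{v}_{i,j}$ and feeds cleanly into Lemma \ref{conv}.
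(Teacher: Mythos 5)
Your proposal is correct and follows essentially the same route as the paper: the paper derives Proposition \ref{prop2.5} directly from the recursive integral formulas for $\widehat{v}_{r,1}$ and $\widehat{v}_{r,\ell}$ together with Lemma \ref{conv} and Lemma \ref{lem2.3}-(1), which is exactly the induction on $\ell$ you carry out (you merely make explicit the bookkeeping of the exponents $s_i\geq 1$ in each convolution factor and the absorption of $\xi^{k-1}$ after integrating and dividing by $\xi^{k+1}$, details the paper leaves implicit).
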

 	 We give a lemma used for estimates of $\widehat{v}_{r,\ell}(\xi,x)$ in section 3.  
	\begin{lem} Let $g(\xi,x) \in {\rm Exp}(k,S\times U)$ with $|g(\xi,x)|\leq M|\xi|^{p-k}e^{c|\xi|^{k}}$
		\; $(p>0)$	and  
	 		 $$v(\xi,x)=\xi^{-k-1}\int_{0}^{\xi} \eta^{k}g(\eta,x) d\eta.$$
	 		Then $(\xi \partial_{\xi}+k+1)v(\xi,x)=g(\xi,x)$ and 
		  	$ |v(\xi,x)|\leq \frac{M|\xi|^{p-k}}{p+1}e^{c|\xi|^{k}}.$ \label{lem2.6}
	\end{lem}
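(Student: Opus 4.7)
The plan is to proceed by two direct computations, verifying the ODE first and then the estimate.

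For the ODE $(\xi\partial_\xi+k+1)v = g$, I would set $F(\xi,x)=\int_0^\xi\eta^k g(\eta,x)d\eta$ so that $v=\xi^{-k-1}F$ and $\partial_\xi F=\xi^k g$. Differentiating gives $\partial_\xi v=-(k+1)\xi^{-k-2}F+\xi^{-k-1}\cdot\xi^k g=-(k+1)v/\xi+g/\xi$, which after multiplying by $\xi$ and rearranging yields the claimed identity. This step is purely formal and requires only that the path from $0$ to $\xi$ lies in the sector where $g$ is holomorphic, which is guaranteed by the assumption $g\in{\rm Exp}(k,S\times U)$ (so we may take a straight segment inside $S$).

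For the estimate, the cleanest route is to make the change of variable $\eta=s\xi$ with $s\in[0,1]$, which converts
\begin{equation*}
v(\xi,x)=\xi^{-k-1}\int_0^\xi\eta^k g(\eta,x)\,d\eta=\int_0^1 s^k g(s\xi,x)\,ds.
\end{equation*}
Applying the hypothesis $|g(\eta,x)|\leq M|\eta|^{p-k}e^{c|\eta|^k}$ with $\eta=s\xi$ gives
\begin{equation*}
|v(\xi,x)|\leq M|\xi|^{p-k}\int_0^1 s^p e^{cs^k|\xi|^k}\,ds\leq M|\xi|^{p-k}e^{c|\xi|^k}\int_0^1 s^p\,ds=\frac{M|\xi|^{p-k}}{p+1}e^{c|\xi|^k},
\end{equation*}
where the middle inequality uses monotonicity of $s\mapsto e^{cs^k|\xi|^k}$ on $[0,1]$.

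There is no substantive obstacle here; the lemma is a pure computation. The only minor point worth being careful about is justifying that integration along the straight segment $\{s\xi:s\in[0,1]\}$ is legitimate (i.e.\ stays inside the sector and uses holomorphy to make the definition of $v$ path independent near $0$), and that the integral indeed converges at $\eta=0$ thanks to $p>0$ which makes $\eta^k|g(\eta,x)|=O(|\eta|^p)$ integrable at the origin.
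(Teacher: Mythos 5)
Your proof is correct and takes essentially the same route as the paper: the paper also integrates along the segment from $0$ to $\xi$, bounds $e^{c|\eta|^{k}}\leq e^{c|\xi|^{k}}$ there, and computes $\int_{0}^{|\xi|}r^{p}\,dr=|\xi|^{p+1}/(p+1)$, which is exactly your substitution $\eta=s\xi$ in different coordinates. The only difference is that the paper dismisses the identity $(\xi\partial_{\xi}+k+1)v=g$ as easily shown, whereas you write out the (correct) one-line differentiation.
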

		\begin{proof} 
		  	From the assumption $ |\eta^{k}g(\eta,x)|\leq M|\eta|^{p}e^{c|\eta|^{k}}$. Then
			\begin{equation*}
				\begin{aligned}\ &                 
				 |\xi^{-k-1}\int_{0}^{\xi} \eta^{k}g(\eta,x)d\eta \leq 
				 M|\xi|^{-k-1}e^{c|\xi|^{k}}\int_{0}^{|\xi|}r^{p}dr\; =\frac{M|\xi|^{p-k} e^{c|\xi|^{k}}}{p+1}.
				\end{aligned}
			\end{equation*}
			We can easily show $(\xi \partial_{\xi}+k+1)v(\xi,x)=g(\xi,x)$.
		\end{proof}
	\section{\normalsize  Estimate and convergence of $\widehat{v}_r(\xi,x,\varepsilon) 
	 	   =\sum_{\ell=1}^{\infty} \widehat{v}_{r,\ell}(\xi,x)\varepsilon^\ell$ }
				We have to estimate $\widehat{v}_{r,\ell}(\xi,x)$ as a function of $(\xi,x)$ 
	 	    to show convergence of $\widehat{v}_r(\xi,x,\varepsilon)
	 	   =\sum_{\ell=1}^{\infty} \widehat{v}_{r,\ell}(\xi,x)\varepsilon^\ell$.
	   	    The main estimate is Proposition \ref{prop-main}. Theorem \ref{th-1} easily follows from it.  
   	\subsection{\normalsize Majorant functions } 
	    We introduce majorant functions to obtain estimates with respect to variable $x$. 
	    %%(\cite{Wag}). 	
	 		   Let $A(\tau)=\sum_{n=0}^{\infty}a_{n}\tau^{n},
	 		    B(\tau)=\sum_{n=0}^{\infty}b_{n}\tau^{n} \in 
			   {\mathbb C}[[\tau]]$. Then
			   $B(\tau) \ll A(\tau)$ means $|b_{n}|\leq a_{n}$ for all 
			   $n$ and 
			   $A(\tau)\gg 0$ means $a_{n}\geq 0$ 
			   for all $n$. Let 
	  	\begin{equation}
	  		\begin{aligned}
		 		 	\theta(\tau):=\sum_{n=0}^{\infty}\frac{c\tau^n}{(n+1)^3}\quad c>0.
	  		\end{aligned} \label{theta}
			 \end{equation}
			 Constant $c>0$ in $\theta(\tau)$ is fixed later. $\theta(\tau)$ converges in $\{|\tau|\leq 1\}$. 
			 We give properties of $\theta(\tau)$. 
			 	  \begin{lem}
		   {\rm (1)} There exists a constant $C>0$ such that $\theta(\tau)\ll C\theta'(\tau)$. \\
%%		    {\rm (2)} There exists a constant $C>0$ such that $\theta(\tau)\ll C\theta'(\tau)$. 
		   {\rm (2)} There exists a constant $c>0$ such that
	  	\begin{equation}																					  		\begin{aligned}																				  		\theta (\tau)\theta(\tau)\ll \theta(\tau),\; \theta (\tau)\theta'(\tau)\ll \theta'(\tau),\;
	  		 \theta'(\tau)\theta'(\tau)\ll \theta'(\tau).
	  		\end{aligned} \label{eq3.2}
	  	\end{equation}
	  \end{lem}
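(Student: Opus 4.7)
The plan is to compute the Taylor coefficients of $\theta$ and $\theta'$ explicitly and then reduce both assertions to convolution estimates for series of the form $\sum 1/(n+1)^s$. Writing $[\tau^n]\theta(\tau)=c/(n+1)^3$ and $[\tau^n]\theta'(\tau)=c(n+1)/(n+2)^3$ puts everything in the form of elementary inequalities between positive coefficients.

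For (1), the desired majorant bound $\theta\ll C\theta'$ amounts, coefficient-by-coefficient, to the inequality
$$\frac{1}{(n+1)^3}\leq C\cdot\frac{n+1}{(n+2)^3},\quad\text{i.e.}\quad\frac{(n+2)^3}{(n+1)^4}\leq C.$$
Using $(n+2)/(n+1)\leq 2$ gives $(n+2)^3/(n+1)^4\leq 8/(n+1)\leq 8$, so $C=8$ works regardless of the value of $c$.

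For (2), the key ingredient is the classical convolution estimate: there exists $K>0$ with
$$\sum_{k=0}^{n}\frac{1}{(k+1)^3(n-k+1)^3}\leq\frac{K}{(n+1)^3},\qquad\sum_{k=0}^{n}\frac{1}{(k+1)^2(n-k+1)^2}\leq\frac{K}{(n+1)^2}.$$
I would prove these by splitting the sum at $k=\lfloor n/2\rfloor$ and using that the larger of $k+1$ and $n-k+1$ exceeds $(n+2)/2$, so one factor contributes a constant times $1/(n+1)^s$ while the other is summable. Granted these, the coefficient of $\tau^n$ in $\theta(\tau)\theta(\tau)$ is bounded by $c^2K/(n+1)^3$, so $\theta\theta\ll\theta$ as soon as $c\leq 1/K$. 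For $\theta\theta'$ and $\theta'\theta'$, I would use the crude bounds $(k+1)/(k+2)^3\leq 1/(k+1)^2$ and $1/(k+2)^3\leq 1/(k+1)^3$ to reduce each convolution to one of the two sums above, dominating the resulting coefficients by $c^2K/(n+2)^2$, which is in turn bounded by $c(n+1)/(n+2)^3$ once $c$ is taken sufficiently small. A single choice of $c$ makes all three inequalities hold simultaneously.

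The main obstacle is the convolution lemma for $\sum 1/((k+1)^s(n-k+1)^s)$; once that is established by the symmetric splitting argument, the rest is coefficient bookkeeping. A secondary point to be careful with is compatibility: the constant $c$ fixed here must be small enough for these three inequalities, but it should also remain consistent with the subsequent use of $\theta$ as a majorant in the estimates of $\widehat{v}_{r,\ell}(\xi,x)$.
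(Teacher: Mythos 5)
Your proposal is correct and follows essentially the same route as the paper: compute the Taylor coefficients of $\theta$ and $\theta'$ explicitly, reduce each majorant inequality to a coefficientwise bound, and establish the convolution estimate $\sum_{k+\ell=n}(k+1)^{-s}(\ell+1)^{-s}\leq K(n+1)^{-s}$ by splitting the sum at $k=n/2$, finally choosing $c$ small enough that $c^2K$ is absorbed into $c$. The paper details only the case $\theta'\theta'\ll\theta'$ and declares the others analogous, whereas you reduce all three to the same convolution lemma, but this is a presentational difference, not a different argument.
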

	  \begin{proof}  {\rm (1)} We have
	    	$\theta'(\tau)=c\sum_{n=0}^{\infty}\frac{n+1}{(n+2)^3}\tau^n $. Take $C>0$ such that 
	    	$\frac{1}{(n+1)^3}\leq C\frac{n+1}{(n+2)^3}$ for all $n\geq 0$.\\
    	{\rm (2)} We show the third estimate of \eqref{eq3.2}. We have 
	  	\begin{equation*}
	  	\begin{aligned}
	  				\theta'(\tau)\theta'(\tau)=c^2\sum_{n=0}^{\infty}\big(\sum_{k+\ell=n}\frac{(k+1)(\ell+1)}
	  				{(k+2)^3(\ell+2)^3 }\big)\tau^n.
	  		\end{aligned}
	  	\end{equation*}
	  	There are constants $C_1, C_2>0$ such that
	 	  \begin{equation*}
	  		\begin{aligned}\ &
	  			\sum_{k+\ell=n}\frac{(k+1)(\ell+1)}{(k+2)^3(\ell+2)^3 }
	  			\leq \sum_{k+\ell=n}\frac{1}{(k+2)^2(\ell+2)^2 } \\ &
	  			=\sum_{0\leq k \leq n/2}\frac{1}{(k+2)^2(n-k+2)^2 } 
	  				+\sum_{n/2<k \leq n}\frac{1}{(k+2)^2(n-k+2)^2 }\\
	  			&	\leq \frac{4}{(n+4)^2}\big(\sum_{0\leq k \leq n/2}\frac{1}{(k+2)^2} 
	  				+\sum_{n/2<k \leq n}\frac{1}{(n-k+2)^2 }\big)\leq  \frac{C_1}{(n+4)^2}
	  			\end{aligned}
	  	\end{equation*}
			  	and $\frac{C_1}{(n+4)^2}\leq \frac{C_2(n+1)}{(n+2)^3}$. Take $c>0$ such that $c^2C_2\leq c$. 
			  	Hence $	\theta'(\tau)\theta'(\tau)\ll \theta'(\tau)$. We can show in the same way that the other 
			  	estimates are valid. 
	  	 \end{proof}
	  	  Majorant function $\theta (\tau)$ satisfying $\theta (\tau)\theta(\tau)\ll \theta(\tau)$ was  
    	 used in Wagschal \cite{Wag}.  %%% $\theta (\tau)$ in this paper \theta(\tau)\ll \theta(\tau),$ 
    	 In the following $c>0$ is fixed so that \eqref{eq3.2} holds. 	
         Let $R>0$ and 
      \begin{equation}
      	\begin{aligned}
      	 \Theta(\tau):=\theta(\tau/R).
      	\end{aligned}\label{Theta}
      \end{equation}   
      Then $\Theta^{(\ell)}(\tau)=R^{-\ell}\theta^{(\ell)}(\tau/R)$.	
	  \begin{lem} There is a constant $B>0$ such that for $\ell \in {\mathbb N}$
		\begin{equation}
	    		\begin{aligned}&  	     
	    		\Theta(\tau)\ll \frac{B^{\ell}}{\ell!}{\Theta^{(\ell)}(\tau)}. 
			\end{aligned} \label{eq3.4}
    		\end{equation} \label{lem3.2}
	  \end{lem}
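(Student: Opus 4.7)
The plan is to prove the majorant inequality \eqref{eq3.4} by a direct term-by-term comparison of Taylor coefficients. Writing $\Theta(\tau) = \sum_{n=0}^{\infty} a_n \tau^n$ with $a_n = c/((n+1)^3 R^n)$, a quick computation gives
\begin{equation*}
\frac{1}{\ell!}\Theta^{(\ell)}(\tau) \;=\; \sum_{m=0}^{\infty} \binom{m+\ell}{\ell}\, a_{m+\ell}\,\tau^m \;=\; \sum_{m=0}^{\infty}\frac{c\,\binom{m+\ell}{\ell}}{(m+\ell+1)^3\,R^{m+\ell}}\,\tau^m,
\end{equation*}
so \eqref{eq3.4} reduces to verifying the numerical bound
\begin{equation*}
(m+\ell+1)^3\, R^\ell \;\le\; B^\ell\,(m+1)^3\,\binom{m+\ell}{\ell}, \qquad m,\ell \ge 0.
\end{equation*}

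Next I would apply two elementary inequalities: the factorization $(m+1)(\ell+1) - (m+\ell+1) = m\ell \ge 0$, which yields $(m+\ell+1)^3 \le (m+1)^3(\ell+1)^3$, together with the trivial lower bound $\binom{m+\ell}{\ell} \ge 1$. Substituting these and cancelling $(m+1)^3$, the required inequality collapses to the $m$-free estimate $(\ell+1)^3 R^\ell \le B^\ell$, equivalently $R(\ell+1)^{3/\ell} \le B$ for $\ell \ge 1$ (the case $\ell=0$ is trivial, as $\Theta \ll \Theta$).

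Finally, since $(\ell+1)^{3/\ell} \to 1$ as $\ell \to \infty$ and the sequence attains its maximum $8$ at $\ell=1$, any choice $B \ge 8R$ works. The closest thing to an obstacle is the need to absorb the polynomial factor $(\ell+1)^3$ in the numerator by the exponential $B^\ell$; this is automatic because the exponent $3/\ell$ decays to zero, so the sequence $(\ell+1)^{3/\ell}$ is bounded. No deeper idea is required.
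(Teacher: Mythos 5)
Your proof is correct and follows essentially the same route as the paper: both compare Taylor coefficients of $\Theta$ and $\Theta^{(\ell)}/\ell!$, discard the binomial factor via $\binom{m+\ell}{\ell}\ge 1$, use $(m+\ell+1)^3\le(m+1)^3(\ell+1)^3$, and then choose $B$ so that $R^\ell(\ell+1)^3\le B^\ell$. The only difference is that you make the admissible constant $B\ge 8R$ explicit, which the paper leaves implicit.
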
 
	  \begin{proof} We have  	
	    	\begin{equation*}
	    		\begin{aligned}&  \frac{\Theta^{(\ell)}(\tau)}{\ell !}= 
	    	 \frac{\theta^{(\ell)}(\tau/R)}{R^{\ell}\ell !}
	    	 =\frac{c}{R^{\ell}}\sum_{n=0}^{\infty}\frac{(n+\ell)!}
	    		{\ell ! n!(n+\ell+1)^3}\frac{\tau^{n}}{R^{n}} 
	    		\gg\frac{c}{R^{\ell}}
	    		\sum_{n=0}^{\infty}\frac{1}
	    		{(n+\ell+1)^3}\frac{\tau^{n}}{R^{n}}	    		
    		\end{aligned}
    		\end{equation*}
    		and
	    		$\frac{1}{({n+\ell+1})^3}\geq 
	    		\frac{1}{(\ell+1)^3}\frac{1}{(n+1)^3}$. 
     	Therefore 
		    \begin{equation*}
		    		\begin{aligned}&  \frac{\Theta^{(\ell)}(\tau)}{\ell !} 
		 		   	\gg \frac{1}{R^{\ell}(\ell+1)^3}
		    		\sum_{n=0}^{\infty}\frac{c}
		    		{(n+1)^3}\frac{\tau^{n}}{R^{n}}.
		    		=\frac{1}{R^{\ell}(\ell+1)^3}\Theta(\tau).
		 				\end{aligned}
	    	\end{equation*} 
			    By taking $B$ such that $R^{\ell}(\ell+1)^3\leq B^{\ell}$, we get \eqref{eq3.4}. 
    		\end{proof}
    	\begin{lem} The following estimates hold.
	   	\begin{equation}
	  		\begin{aligned}
	  		\sum_{\sum_{s=1}^{n}\ell_s=\ell}\frac{\ell!}{\ell_1!\ell_2!\cdots \ell_n!}
	  		\Theta^{(\ell_1)}(\tau)\Theta^{(\ell_2)}(\tau)\cdots\Theta^{(\ell_n)}(\tau)\ll\Theta^{(\ell)}(\tau),
	  		\end{aligned}\label{eq3.5}
	  	\end{equation}
	  	\begin{equation}
	  		\begin{aligned}\ &
	  		\sum_{\sum_{s=1}^{n}\ell_s=\ell}\frac{\ell!}{\ell_1!\ell_2!\cdots \ell_n!}
	  		\Theta^{(\ell_1+1)}(\tau)\Theta^{(\ell_2+1)}(t)\cdots\Theta^{(\ell_n+1)}(\tau)\\ 
	  		 & \ll R^{-n+1} \Theta^{(\ell+1)}(t),
	  		\end{aligned}\label{eq3.6}
	  	\end{equation}
	  	\begin{equation}
	  		\begin{aligned}\ &
	  	 	\sum_{\ell_1+\ell_2=\ell}\frac{\ell!}{\ell_1!\ell_2!}
	  		\Theta^{(\ell_1)}({\tau})\Theta^{(\ell_2+1)}({\tau}) \ll \Theta^{(\ell+1)}(\tau).
	  		\end{aligned}\label{eq3.7}
	  	\end{equation} \label{lem3.3}
	  \end{lem}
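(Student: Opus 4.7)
The plan is to recognize each sum on the left as a generalized Leibniz expansion. Recall the multinomial differentiation formula
\begin{equation*}
(f_1 f_2 \cdots f_n)^{(\ell)} = \sum_{\ell_1+\cdots+\ell_n=\ell} \frac{\ell!}{\ell_1!\cdots\ell_n!}\, f_1^{(\ell_1)} f_2^{(\ell_2)}\cdots f_n^{(\ell_n)}.
\end{equation*}
Applied with $f_1=\cdots=f_n=\Theta$, this rewrites the left hand side of \eqref{eq3.5} as $(\Theta^n)^{(\ell)}$; applied with $f_1=\cdots=f_n=\Theta'$ it rewrites the left hand side of \eqref{eq3.6} as $((\Theta')^n)^{(\ell)}$; and applied with $f_1=\Theta,\ f_2=\Theta'$ it rewrites the left hand side of \eqref{eq3.7} as $(\Theta\cdot\Theta')^{(\ell)}$.

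So the problem reduces to: (i) establishing the un-differentiated product inequalities, and (ii) observing that the relation $\ll$ is preserved under $\partial_\tau$. Point (ii) is immediate from the coefficient-wise definition, since multiplying the $\tau^n$ coefficient of $A(\tau)-B(\tau)$ by $n+1$ preserves the sign of the bound. For point (i), I would first translate Lemma 3.1(2) from $\theta$ to $\Theta(\tau)=\theta(\tau/R)$, carefully tracking the $R$-factors that enter because $\Theta'(\tau)=R^{-1}\theta'(\tau/R)$. This gives the three base estimates
\begin{equation*}
\Theta\,\Theta \ll \Theta,\qquad \Theta\,\Theta' \ll \Theta',\qquad \Theta'\,\Theta' \ll R^{-1}\,\Theta'.
\end{equation*}
An easy induction on $n$ (using that multiplication by the nonnegative series $\Theta$ or $\Theta'$ preserves $\ll$) then upgrades these to
\begin{equation*}
\Theta^{n}\ll \Theta \quad\text{and}\quad (\Theta')^{n}\ll R^{-(n-1)}\,\Theta',
\end{equation*}
and of course $\Theta\cdot\Theta'\ll\Theta'$ directly.

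Differentiating each of these inequalities $\ell$ times and invoking the generalized Leibniz identity above delivers \eqref{eq3.5}, \eqref{eq3.6}, and \eqref{eq3.7} respectively. The factor $R^{-n+1}$ in \eqref{eq3.6} appears only at the scaling step $\Theta'\Theta'\ll R^{-1}\Theta'$, so the only thing to be careful about is propagating that $R^{-1}$ through the induction; otherwise the argument is purely formal. I do not anticipate any serious obstacle, since everything is reduced to the already proved multiplicative properties of $\theta$ in Lemma 3.1 together with the trivial stability of $\ll$ under differentiation.
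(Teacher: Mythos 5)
Your proposal is correct and follows essentially the same route as the paper: both reduce each estimate, via the multinomial Leibniz formula and the stability of $\ll$ under differentiation, to the undifferentiated product inequalities $\Theta^n\ll\Theta$, $(\Theta')^n\ll R^{-n+1}\Theta'$ and $\Theta\,\Theta'\ll\Theta'$ obtained by rescaling Lemma 3.1(2). You merely make explicit the induction and the $R$-bookkeeping that the paper leaves implicit.
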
 
	  \begin{proof} From $\Theta(\tau)^n\ll \Theta(\tau)$ we have 
	  $(\frac{d}{d\tau})^\ell \Theta(\tau)^n\ll (\frac{d}{d\tau})^\ell\Theta(\tau)$ and \eqref{eq3.5}. From
	   $\theta'(\tau/R)^n\ll \theta'(\tau/R)$ we have $R^{-n}\theta'(\tau/R)^n\ll R^{-n} \theta'(\tau/R)$ and
	   $\Theta'(\tau)^n\ll R^{-n+1} \Theta'(\tau)$. 
	  	Hence
	  	\begin{equation*}
	  		\begin{aligned}\ &
	  	  	\sum_{\sum_{s=1}^{n}\ell_s=\ell}\frac{\ell!}{\ell_1!\ell_2!\cdots \ell_n!}
	  		\Theta^{(\ell_1+1)}({\tau})\Theta^{(\ell_2+1)}({\tau})\cdots
	  		\Theta^{(\ell_n+1)}({\tau}) \ll R^{-n+1}\Theta^{(\ell+1)}(\tau).
	  		\end{aligned}
	  	\end{equation*}
	  	From $\theta(\tau/R)\theta'(\tau/R)\ll \theta'(\tau/R)$, we have
	  		\begin{equation*}
	  		\begin{aligned}\ &
	  	  	\sum_{\ell_1+\ell_2=\ell}\frac{\ell!}{\ell_1!\ell_2!}
	  		\Theta^{(\ell_1)}({\tau})\Theta^{(\ell_2+1)}({\tau}) \ll \Theta^{(\ell+1)}(\tau).
	  		\end{aligned}
	  	\end{equation*}
	  \end{proof}
	  	   We have from Lemma  \ref{lem3.3}
	  \begin{cor}	   
	   	\begin{equation}\left \{
	  		\begin{aligned}\ &
	  	\frac{\Theta^{(\ell_1)}(t)\Theta^{(\ell_2)}(t)\cdots\Theta^{(\ell_n)}(t)}{\ell_1!\ell_2!\cdots \ell_n!}
	  	\ll\frac{\Theta^{(\ell)}(t)}{\ell!},
	  	\\ & \frac{
	  		\Theta^{(\ell_1+1)}(t)\Theta^{(\ell_2+1)}(t)\cdots	\Theta^{(\ell_n+1)}(t)}{\ell_1!\ell_2!
	  		\cdots \ell_n!}	\ll \frac{1}{R^{n-1}}\frac{ \Theta^{(\ell+1)}(t)}{\ell!}.
	   		\end{aligned} \right . \label{3.8}
	   	\end{equation}\label{Cor-1}
	\end{cor}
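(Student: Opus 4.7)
My plan is that this corollary is essentially a mechanical reformulation of Lemma \ref{lem3.3}, with no new analytic work needed. The idea is to isolate a single summand from each inequality in \eqref{eq3.5}--\eqref{eq3.6} and then divide through by $\ell!$.

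First I would observe that since $c>0$ and $R>0$, the series $\theta(\tau)=\sum c\tau^n/(n+1)^3$ has nonnegative coefficients, and hence so does $\Theta(\tau)=\theta(\tau/R)$ and every derivative $\Theta^{(j)}(\tau)\gg 0$. Consequently every product $\Theta^{(\ell_1)}\cdots\Theta^{(\ell_n)}$ and every product $\Theta^{(\ell_1+1)}\cdots\Theta^{(\ell_n+1)}$ has nonnegative coefficients, so each of them is majorized termwise by any finite sum of such products (over a set of index tuples that contains the chosen one). In particular, for a fixed tuple $(\ell_1,\dots,\ell_n)$ with $\ell_1+\cdots+\ell_n=\ell$, the single summand is dominated by the full sum appearing on the left of \eqref{eq3.5}.

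Thus from \eqref{eq3.5} I immediately obtain
\begin{equation*}
\frac{\ell!}{\ell_1!\cdots\ell_n!}\,\Theta^{(\ell_1)}(\tau)\cdots\Theta^{(\ell_n)}(\tau)\ll \Theta^{(\ell)}(\tau),
\end{equation*}
and dividing by $\ell!$ gives the first line of \eqref{3.8}. Exactly the same reasoning applied to \eqref{eq3.6} yields
\begin{equation*}
\frac{\ell!}{\ell_1!\cdots\ell_n!}\,\Theta^{(\ell_1+1)}(\tau)\cdots\Theta^{(\ell_n+1)}(\tau)\ll R^{-n+1}\Theta^{(\ell+1)}(\tau),
\end{equation*}
and dividing by $\ell!$ yields the second line of \eqref{3.8}.

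There is no real obstacle here; the only point that requires any thought is the nonnegativity step, which justifies replacing the sum in Lemma \ref{lem3.3} by any one of its summands. Once that is in place the corollary follows by inspection, so I would keep the proof to just a few lines.
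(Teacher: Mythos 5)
Your proposal is correct and follows exactly the route the paper intends: the paper gives no separate proof beyond the remark that the corollary follows from Lemma \ref{lem3.3}, and the intended derivation is precisely yours --- since every $\Theta^{(j)}(\tau)\gg 0$, a single summand of the left-hand sides of \eqref{eq3.5} and \eqref{eq3.6} is majorized by the whole sum, and dividing by $\ell!$ gives \eqref{3.8}.
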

	\subsection{\normalsize Estimate} %%%P16
	   		Let $x=(x_1,x_2, \cdots,x_n)$. 
	    For $A(x)=\sum_{\alpha\in {\mathbb N}^n } a_{\alpha}x^{\alpha}$, 
	    $B(x)=\sum_{\alpha\in {\mathbb N}^n } b_{\alpha}x^{\alpha} \in {\mathbb C}
	    [[x]]$, $ B(x)\ll A(x)$ means $|b_{\alpha}|\leq a_{\alpha}$ for 
	    all $\alpha\in {\mathbb N}^n$.     
			Let $X=x_1+x_2+\cdots + x_n$, $R>0$ be a constant such that  
			${\{x\in \mathbb{C}^n; |x|\leq R\}}$ is contained in $\Omega_0$ and $\Theta(X)=\theta(X/R)$.
			$\Theta(X)$ is a convergent power series of $x_1,x_2, \cdots,x_n$ with positive coefficients.
      We have from Lemma \ref{lem2.3} 
 		 \begin{lem} There exist positive constants $M, C_0>0$ such that 	
	   	  \begin{equation}
     			\begin{aligned}\ 
				    	\widehat{g}_{r,\alpha,A}(\xi,x)\ll& MC_0^{|\alpha|+|A|}
				  	  \frac{|\xi|^{|\alpha|+|A|-k}e^{c|\xi|^{k}}}{\Gamma((|\alpha|+|A|)/k)}
				 	   \frac{\Theta^{(|\alpha|+|A|-1)}(X)}{(|\alpha|+|A|-1)!} \\
				 	   & \xi \in S(\widehat{I}_\epsilon)\cup \{0<|\xi|<r\}
			   \end{aligned}\label{g^}
 	  	 \end{equation} 
 	  	 for $|\alpha|+|A|\geq 1$ 
	   \end{lem}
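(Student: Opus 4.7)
The plan is to convert the sup-norm bound from Lemma 2.3 into a majorant in $x$, and then dominate the resulting elementary majorant by a suitable constant multiple of $\Theta^{(|\alpha|+|A|-1)}(X)/(|\alpha|+|A|-1)!$.

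First I would apply Lemma \ref{lem1.6} in the $x$-variable to the estimate (2.7) on any polydisc $\{|x_i|<R'\}$ with $R'<R_0$. This gives the elementary majorant
\[
\widehat g_{r,\alpha,A}(\xi,x)\ll GC^{|\alpha|+|A|}\,\frac{|\xi|^{|\alpha|+|A|-k}e^{c|\xi|^k}}{\Gamma((|\alpha|+|A|)/k)}\,\prod_{i=1}^n\frac{1}{1-x_i/R'}
\]
for $\xi\in S(\widehat I_\epsilon)\cup\{0<|\xi|<r\}$, since $\prod_i(1-x_i/R')^{-1}=\sum_\gamma (R')^{-|\gamma|}x^\gamma$ is exactly the Cauchy majorant on that polydisc.

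Next I would choose $R$ with $0<R<R'$ in the definition $\Theta(X)=\theta(X/R)$, and show that there is a constant $C_1>0$, independent of $\ell:=|\alpha|+|A|-1$, such that
\[
\prod_{i=1}^n(1-x_i/R')^{-1}\ll C_1^{\ell+1}\,\frac{\Theta^{(\ell)}(X)}{\ell!}\quad\text{for all }\ell\ge 0.
\]
Comparing coefficients of $x^\gamma$ with $m:=|\gamma|$, and using $m!/\gamma!\ge 1$ together with $\binom{m+\ell}{\ell}\ge 1$, this reduces to the scalar inequality
\[
(R/R')^m\,R^{-\ell}(m+\ell+1)^3\le c\,C_1^{\ell+1}\quad\text{for all }m,\ell\ge 0.
\]
Because $R<R'$ strictly, $\sup_m(R/R')^m(m+1)^3$ is finite, and $(m+\ell+1)^3\le(m+1)^3(\ell+1)^3$ reduces the left-hand side to at most $K(\ell+1)^3R^{-\ell}$, which is dominated by $C_1^{\ell+1}$ once $C_1$ is chosen sufficiently large.

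Combining the two displayed majorants with $M:=G$ and $C_0:=CC_1$ yields the claimed estimate. The main delicate point is the second step: one must take the majorant radius $R$ strictly smaller than the Cauchy radius $R'$, since this is precisely what lets the geometric decay $(R/R')^m$ absorb the polynomial factor $(m+\ell+1)^3$ arising from the $1/(n+1)^3$-weights built into $\theta$. The other potential worry, that the combinatorial factor $m!/\gamma!$ might spoil the comparison, is resolved by noting it always helps the right-hand side (it is $\ge 1$), so the worst case reduces to the scalar inequality above.
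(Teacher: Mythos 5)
Your argument is correct and is essentially the paper's own: the paper likewise passes from the sup bound \eqref{eq2.7} to a majorant by Cauchy's inequality (taking the radius $R$ of $\Theta$ strictly inside $\Omega_0$ so that the geometric factor absorbs the cubic weight) and then applies Lemma \ref{lem3.2}, $\Theta(X)\ll B^{\ell}\Theta^{(\ell)}(X)/\ell!$, whose proof is precisely your coefficient comparison — you have merely merged the two comparisons into one step and supplied the detail the paper leaves implicit. The only blemish is the slip $R^{-\ell}$ where the algebra gives $R^{\ell}$ in your scalar inequality; this is inconsequential, since either exponential factor is absorbed into $C_1^{\ell+1}$.
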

	   \begin{proof} % We have from Lemma \ref{lem2.2} 
	   		It follows from \eqref{eq2.7} that there exists $M>0$ such that 
	   			\begin{equation*}
	   			\begin{aligned} 
			   			& \widehat{g}_{r,\alpha,A}(\xi,x)\ll MC^{|\alpha|+|A|}
			   			\frac{|\xi|^{|\alpha|+|A|-k}e^{c|\xi|^{k}} }{\Gamma(\frac{|\alpha|+|A|}{k})}\Theta(X).
	   			\end{aligned}
	   		\end{equation*}
	   		From 
	   		$\Theta(X)\ll B^{\ell}\frac{\Theta^{(\ell)}(X)}{\ell !}$ (Lemma \ref{lem3.2}) we take
	   		$C_0= BC$ and have \eqref{g^}.
	   \end{proof} 
		   We also have from \eqref{eq2.7}
		   $\widehat{g}_{r,0,0}(\xi,x)= \widehat{g}_{r}(\xi,x,0,0)
		   	\ll M'\frac{|\xi|^{1-k}e^{c|\xi|^{k}}}	{\Gamma(\frac{1}{k})}\Theta^{(1)}(X).$
  			 $\widehat{v}_{r,1}(\xi,x)$ is determined by
 		 \begin{equation*}
			  \widehat{v}_{r,1}(\xi,x)=\xi^{-k-1}\int_{0}^{\xi} \eta^{k}  \widehat{g}_{r}(\eta,x,0,0) d\eta. 
		\end{equation*}
 		There exist $\{M_{r,1}\}_{r=1}^m$  from Lemma \ref{lem2.6} such that 
 	 \begin{equation}
		\begin{aligned}
		 &  \widehat{v}_{r,1}(\xi,x) \ll 
		 \frac{M_{r,1}|\xi|^{1-k}e^{c|\xi|^{k}}}{\Gamma(1/k)}{\Theta^{(1)}(X)}.
		\end{aligned} \label{Mr1}
	\end{equation}
	   It follows from	Proposition \ref{prop2.5}	that 
		$\{\xi^{k-1}\widehat{v}_{r,\ell}(\xi,x)\}_{\ell=1}^{\infty}$ is holomorphic in
		$ (S(\widehat{I})\cup\{|\xi|<r\})\times \Omega_0$. Further we have the following estimates for them,  
	\begin{prop} Let $\epsilon>0$ be a small constant and $\xi \in (S(\widehat{I}_{\epsilon})\cup\{0<|\xi|<r\})$.
			 \\
			{\rm (1)} There exist constants $\{M_{r,\ell}\}_{r=1}^m$ $(\ell=1,2 \dots)$ and $c$ depending on 
			$\epsilon$ such that
			\begin{equation}
				\begin{aligned}
					 \widehat{v}_{r,\ell}(\xi,x)\ll M_{r,\ell}
					 \frac{|\xi|^{\ell-k}e^{c|\xi|^{k}}}{\Gamma(\ell/k)}
					\frac{\Theta^{(\ell)}(X)}{\ell !}
				\end{aligned} \label{M-est}
			\end{equation}
		 	and $\sum_{\ell=1}^{\infty}M_{r,\ell}\tau^{\ell}$ converges in a neighborhood of $\tau=0$. \\
			{\rm (2)}	Let $\varepsilon=1$ and 
			$\widehat{v}_r(\xi,x)=\sum_{\ell=1}^{\infty}\widehat{v}_{r,\ell}(\xi,x)$. 	Then
					$\{\widehat{v}_{r}(\xi,x)\}_{r=1}^m$ converge in a neighborhood $\omega_0$ of $x=0$ and 
					there exist constants $M$ and $c'$ such that 
			\begin{equation}
				\begin{aligned}
				|\widehat{v}_{r}(\xi,x)|\leq M
				{|\xi|^{1-k_1}e^{c'|\xi|^{k}}}.
				\end{aligned} \label{main-est}
			\end{equation} 	\label{prop-main} 
		\end{prop}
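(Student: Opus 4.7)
The plan is induction on $\ell$, with the base case $\ell=1$ immediate from \eqref{Mr1}. For the inductive step I substitute the inductive majorants into each summand of \eqref{GL-1} and analyze the $\xi$- and $X$-algebras separately. On the $\xi$-side, Lemma \ref{conv} is tailor-made: the $|\xi|^{s-k}/\Gamma(s/k)$ factors convolve so that exponents add and $\Gamma$-denominators combine into $\Gamma(\ell/k)$, with the constraint \eqref{GL-2} producing exactly the total exponent $\ell-k$. On the $X$-side, $\widehat{g}_{r,\alpha,A}$ contributes $\Theta^{(|\alpha|+|A|-1)}/(|\alpha|+|A|-1)!$, each $\widehat{v}_{i,\ell_i(s)}$ contributes $\Theta^{(\ell_i(s))}/\ell_i(s)!$, and each $\partial_{x_j}\widehat{v}_{i,\ell_{i,j}(s)}$ contributes $\Theta^{(\ell_{i,j}(s)+1)}/\ell_{i,j}(s)!$. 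The second estimate of Corollary \ref{Cor-1} collapses the $|A|$ derivative factors with cost $R^{-(|A|-1)}$, the first estimate combines the remaining type-$\Theta^{(a)}/a!$ factors, and the whole $X$-product is majorized by $C\,\ell\,R^{1-|A|}\,\Theta^{(\ell)}(X)/\ell!$. Invoking Lemma \ref{lem2.6} with $p=\ell$ then folds in a $1/(\ell+1)$ factor that absorbs the linear growth in $\ell$.

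This yields a Cauchy--Kowalevskaya-type recursion
\begin{equation*}
M_{r,\ell}\leq\frac{K_1}{\ell+1}\sum_{|\alpha|+|A|\geq 1}C_0^{|\alpha|+|A|}R^{1-|A|}\sum_{\text{compositions}}\prod_{i,s}M_{i,\ell_i(s)}\prod_{(i,j),s}M_{i,\ell_{i,j}(s)}
\end{equation*}
for $\ell\geq 2$, the compositions constrained by \eqref{GL-2}. I would then introduce a scalar majorant $W(\tau)\gg W_r(\tau):=\sum_\ell M_{r,\ell}\tau^\ell$ for every $r$. The recursion is equivalent to an implicit equation of the form $W(\tau)=\tau a+\tau\,H\!\bigl(\tau,W(\tau),W(\tau)\bigr)$ with $a>0$ and $H$ a power series vanishing at the origin, provided $R$ is chosen large enough that the factor $C_0^{|\alpha|+|A|}R^{1-|A|}$ is dominated by a convergent geometric series. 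The implicit function theorem then yields a holomorphic $W(\tau)$ near $\tau=0$, establishing part (1).

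For part (2), I restrict $x$ to a polydisc $\omega_0$ with $|X|\leq R^{\sharp}<R$; Cauchy estimates on $\Theta$ give $\Theta^{(\ell)}(X)/\ell!\leq K^\ell$. Combining with $M_{r,\ell}K^\ell\leq \sigma^\ell$ (from convergence of $W_r$) and setting $\varepsilon=1$,
\begin{equation*}
|\widehat{v}_r(\xi,x)|\leq |\xi|^{-k}e^{c|\xi|^k}\sum_{\ell\geq 1}\frac{(\sigma|\xi|)^\ell}{\Gamma(\ell/k)},
\end{equation*}
and the Mittag--Leffler-type tail is bounded by $C|\xi|\,e^{c_2|\xi|^k}$, which produces \eqref{main-est} with the claimed singular behaviour $|\xi|^{1-k}$ near $\xi=0$. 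The hardest step will be the bookkeeping in the inductive bound: aligning the $\xi$-convolution arithmetic (Lemma \ref{conv}) with the $\Theta$-derivative identities (Corollary \ref{Cor-1}) so that the composition constraint \eqref{GL-2} leaves behind exactly the claimed factor $|\xi|^{\ell-k}\Theta^{(\ell)}(X)/\ell!$ in every summand, with combinatorial overhead small enough to be dominated by the implicit-function fixed-point argument; once that is in place, the tail estimate of part (2) is standard.
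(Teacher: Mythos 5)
Your proposal is correct and follows essentially the same route as the paper: the inductive majorant bound via Lemma \ref{conv} on the $\xi$-side and Corollary \ref{Cor-1} together with \eqref{eq3.7} on the $X$-side, the $1/(\ell+1)$ gain from Lemma \ref{lem2.6} absorbing the factor $\ell$, the recursion \eqref{Mr'} resolved by the implicit function theorem (the paper identifies $M_{r,\ell}$ exactly with the Taylor coefficients of the solution of $y_r=G_r(Y,z)$), and the Mittag--Leffler-type summation for part (2). The only slip is the remark that $R$ should be ``chosen large enough'': $R$ is constrained by $R<R_0$, but this is harmless since $\sum_{\alpha,A}C_0^{|\alpha|+|A|}R^{1-|A|}Y^{\alpha}Y^{A}$ converges near $Y=0$ for any fixed $R>0$, so the implicit function theorem applies regardless.
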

		      $\widehat{V}(\xi,x)=(\widehat{v}_{1}(\xi,x),\dots,\widehat{v}_{m}(\xi,x))$ is a
		      solution of convolution equation \eqref{Ceq}.
			Theorem \ref{th-1} follows from Proposition \ref{prop-main}. 
			Let  
			\begin{equation}
	 		   	\begin{aligned}
			    	v_r(t,x): =	{\mathscr L}_{k,0}\widehat{v}_{i}
			    	=\int_{0}^{\infty e^{i\theta}}e^{-(\frac{\xi}{t})^{k}}\widehat{v}_{r}(\xi,x)d\xi^{k}\quad
			    	 \xi \in S(\widehat{I}_{\epsilon}).
	 		   	\end{aligned}	\label{vr'}
		    \end{equation}  
			(see \eqref{vr}) and $V(t,x)=({v}_1(t,x),\cdots,{v}_m(t,x))$. Then $V(t,x)$ satisfies \eqref{CP-1}. 
			This means solution $U(t,x)=tV(t,x)$ of \eqref{CP} is Borel summable. 
			\par  
		\subsection{\normalsize Proof of Proposition \ref{prop-main} \;\; constants $\{M_{r,\ell}\}$} 
		   Proof of Proposition \ref{prop-main} consists of 2 parts. 
		   We show how to determine constants $\{M_{r,\ell}\; \ell\geq 1\}$ in this subsection.
			 The convergence of $\sum_{\ell=1}^{\infty} M_{r,\ell}s^{\ell}$ is shown in the next subsection. \\  
	    \; $\{M_{r,1}\}_{r=1}^m$ are determined by \eqref{Mr1}. 
    We assume $\{M_{r,\ell}\}_{r=1}^m$ with \eqref{M-est} are determined for $1\leq \ell \leq L-1$. 
    Let us show how to determine $\{M_{r,L}\}_{r=1}^m$.  Let
    \begin{equation*}
    	\begin{aligned}
    	 I:=&\prod_{1\leq i\leq m}^{*} \widehat{v}_{i,\ell_i(1)}(\xi,x)
    	 *\widehat{v}_{i,\ell_{i}(2)}(\xi,x)* \ldots 
	 		  *\widehat{v}_{i,\ell_{i}(\alpha_i)}(\xi,x),\\
	 		 II:=& \prod_{\begin{subarray} \ (i,j)\in \Delta \end{subarray}}^*
	 	    \partial_{x_j}\widehat{v}_{i,{\ell_{i,j}(1)}}(\xi,x)*\partial_{x_j}
	 	    \widehat{v}_{i,\ell_{i,j}(2)}(\xi,x)* \cdots 
	 		   *\partial_{x_j}\widehat{v}_{i,\ell_{i,j}(A_{i,j})}(\xi,x).
    	\end{aligned}
    \end{equation*} 
    Let $\sum_{i=1}^{m}\sum_{s=1}^{\alpha_i}\ell_i(s)=\ell'$. 
    Then we have from Lemma \ref{conv} and corollary  \ref{Cor-1}  %% and \eqref{3.8}
     \begin{equation*}
    	\begin{aligned}
    	 I\ll &\frac{|\xi|^{\ell'-k}e^{c|\xi|^{k}}}{\Gamma(\ell'/k)}
    	 \big(\prod_{1\leq i\leq m} M_{i,\ell_i(1)} M_{i,\ell_i(2)}\cdots  M_{i,\ell_i(\alpha_i)}\big)\\ &
    	 \big(\prod_{1\leq i\leq m}
    	    		\frac{\Theta^{(\ell_i(1))}(X)\Theta^{(\ell_i(2))}(X)\cdots \Theta^{(\ell_i(\alpha_i))}(X)}
    	    		{\ell_i(1)!\ell_i(2)!\cdots \ell_i(\alpha_i)!}\big) \\
    		   \ll & \big(\prod_{1\leq i\leq m}
			    	M_{i,\ell_i(1)} M_{i,\ell_i(2)}\cdots  M_{i,\ell_i(\alpha_i)}\big)
    			 \frac{|\xi|^{\ell'-k}e^{c|\xi|^{k}}}{\Gamma(\ell'/k)}
    	  	\frac{\Theta^{(\ell')}(X)}{\ell'!}.
    	\end{aligned}
    \end{equation*} 
      Let $\sum_{\begin{subarray}\ (i,j)\in \Delta \end{subarray}}
      \sum_{s=1}^{A_{i,j}}\ell_{i,j}(s)=\ell''$. We have also %%from Lemma \ref{conv} and \eqref{3.8}
          \begin{equation*}
    	\begin{aligned}
    	 II \ll &\frac{|\xi|^{\ell''-k}e^{c|\xi|^{k}}}{\Gamma(\ell''/k)}\big(\prod_{i,j\in \Delta}
    	 M_{i,\ell_{i,j}(1)} M_{i,\ell_{i,j}(2)}\cdots  M_{i,\ell_{i,j}(A_{i,j})}\big)\\ &
			\big(	\prod_{i,j\in \Delta}\frac{\Theta^{(\ell_{i,j}(1)+1)}(X)\Theta^{(\ell_{i,j}(2)+1)}(X)\cdots 
				\Theta^{(\ell_{i,j}(A_{i,j})+1)}(X)}
    	    		{\ell_{i,j}(1)!\ell_{i,j}(2)!\cdots \ell_{i,j}(A_{i,j})!}\big) \\
    	   \ll & \big(\prod_{i,j\in \Delta}
	    	M_{i,\ell_{i,j}(1)} M_{i,\ell_{i,j}(2)}\cdots  M_{i,\ell_{i,j}(A_{i,j})}\big) \frac{|\xi|^{\ell''-k}e^{c|\xi|^{k}}}{\Gamma(\ell''/k)} 
		\frac{\Theta^{(\ell''+1)}(X)}{R^{|A|-1}\ell''!}.
    	\end{aligned}
    \end{equation*}
	    Therefore
    \begin{equation*}
    	\begin{aligned}\ & 	I*II \ll\big(\prod_{1\leq i\leq m} 
    	M_{i,\ell_i(1)} M_{i,\ell_i(2)}\cdots  M_{i,\ell_i(\alpha_i)}\big) \\ & \times
    \big(	\prod_{i,j\in \Delta}
    	M_{i,\ell_{i,j}(1)} M_{i,\ell_{i,j}(2)}\cdots  M_{i,\ell_{i,j}(A_{i,j})}\big) 
    	 \frac{|\xi|^{\ell'+\ell''-k}e^{c|\xi|^{k}}}{\Gamma(\frac{\ell'+\ell''}{k})}
    	 \frac{\Theta^{(\ell')}(X)}{\ell'!} \frac{\Theta^{(\ell''+1)}(X)}{R^{|A|-1}\ell''!}.
    	\end{aligned}
    \end{equation*}
	     It follows from \eqref{eq3.7} that $\frac{\Theta^{(\ell')}(X)\Theta^{(\ell''+1)}(X)}{\ell'! \ell''!}\ll 
      \frac{\Theta^{(\ell'+\ell''+1)}(X)}{(\ell'+\ell'')!} $ and
    \begin{equation*}
     	\begin{aligned}\ 
		    & \frac{\Theta^{(\ell'+\ell''+1)}(X)}{(\ell'+\ell'')!}
	    	\frac{\Theta^{(|\alpha|+|A|-1)}(X)}{(|\alpha|+|A|-1)!} \ll
	    	\frac{\Theta^{(\ell'+\ell''+|\alpha|+|A|)}(X)}{(\ell'+\ell''+|\alpha|+|A|-1)!}.
    	\end{aligned}
    \end{equation*}
    Hence we get from   \eqref{g^} 
    \begin{equation}
    	\begin{aligned}\ &
    	\widehat{g}_{r,\alpha,A}(\xi,x)*I*II \ll \frac{GC_0^{|\alpha|+|A|}}{R^{|A|-1}}
	    	\big(\prod_{1\leq i\leq m} 
	    	M_{i,\ell_i(1)} M_{i,\ell_i(2)}\cdots  M_{i,\ell_i(\alpha_i)}\big) \\ & \times\big(
	    	\prod_{(i,j)\in \Delta}(M_{i,\ell_{i,j}(1)} M_{i,\ell_{i,j}(2)}\cdots  M_{i,\ell_{i,j}(A_{i,j})}\big) 
	    	 \\ &  
	    		\times 	 \frac{|\xi|^{\ell'+\ell''+|\alpha|+|A|-k}e^{c|\xi|^{k}}}
	    		{\Gamma(\frac{\ell'+\ell''+|\alpha|+|A|}{k})}
	    	\frac{\Theta^{(\ell'+\ell''+|\alpha|+|A|)}(X)}{(\ell'+\ell''+|\alpha|+|A|-1)!}.
	    	\end{aligned}
    \end{equation}
     Let  
  	\begin{equation}
			\begin{aligned}
				{\Sigma}(L)&=\Big\{\big(\alpha,A,\ell_i(s),\ell_{i,j}(s),
				\; 1\leq i \leq m,\;1 \leq j \leq n\big); \\
		&\sum_{i=1}^m\sum_{s=1}^{\alpha_i}\ell_i(s)+
				\sum_{(i,j)\in \Delta}
				\sum_{s=1}^{A_{i,j}}\ell_{i,j}(s)+|\alpha|+|A|=L \Big\}.
			\end{aligned} \label{Sigma_ell}
		\end{equation}
     Then 
		    \begin{equation*}
    	\begin{aligned}\ &{\mathcal G}_{r,L}(\xi,x)%% 	= \sum_{\Sigma(\ell)}
 %%   	\widehat{g}_{r,\alpha,A}(\xi,x)*	\widehat{V}^{*\alpha}*{\nabla_x 	\widehat{V}}^{*A}\\ & 
    	\ll\bigg[ \sum_{\Sigma(L)}
    	\frac{GC_0^{|\alpha|+|A|}}{R^{|A|-1}}\prod_{1\leq i\leq m}\big( 
    	M_{i,\ell_i(1)} M_{i,\ell_i(2)}\cdots  M_{i,\ell_i(\alpha_i)}\big) \\ & 
	    	\prod_{(i,j)\in \Delta}\big (   	
    	M_{i,\ell_{i,j}(1)} M_{i,\ell_{i,j}(2)}\cdots  M_{i,\ell_{i,j}(A_{i,j})}\big) \bigg] 	
    	 \frac{|\xi|^{\L-k}e^{c|\xi|^{k}}}{\Gamma(\frac{L}{k})}
    	\frac{\Theta^{(L)}(X)}{(L-1) !}.
	    	\end{aligned}
    \end{equation*}
    Let 
    \begin{equation}
    	\begin{aligned}\ & M_{r,L} =\sum_{\Sigma(L)}
 		   	G_{\alpha,A}'\prod_{1\leq i\leq m} \big(
    		M_{i,\ell_i(1)} M_{i,\ell_i(2)}\cdots  M_{i,\ell_i(\alpha_i)}\big) \\ 
    	& \times \prod_{(i,j)\in \Delta}\big(M_{i,\ell_{i,j}(1)} M_{i,\ell_{i,j}(2)}\cdots  
    		M_{i,\ell_{i,j}(A_{i,j})}\big)\quad  	G_{\alpha,A}'=\frac{GC_0^{|\alpha|+|A|}}{R^{|A|-1}}.
    	\end{aligned} \label{Mr'}
    \end{equation}
    Then
    \begin{equation}
    	\begin{aligned}
    	\ &{\mathcal G}_{r,L}(\xi,x) \ll M_{r,L}
    	  \frac{|\xi|^{L-k}e^{c|\xi|^{k}}}{\Gamma(\frac{L}{k})}
    	\frac{\Theta^{(L)}(X)}{(L-1) !}.
    	\end{aligned}
    \end{equation}
    $\widehat{v}_{r,L}(\xi,x)$ is defined by 
    	\begin{equation*}\left \{
		\begin{aligned}\
		  & (\xi \partial_{\xi}+k+1)\widehat{v}_{r,L}(\xi,x)={\mathcal G}_{r,L}(\xi,x)\;\; L\geq 2.\\
		  & \widehat{v}_{r,L}(\xi,x)=\xi^{-k-1} 
		  \int_0^{\xi}\eta^{k} {\mathcal G}_{r,L}(\eta,x)d\eta. 
		\end{aligned} .\right .
	\end{equation*}
			It follows from Lemma \ref {lem2.6} that   
 	     $\widehat{v}_{r,L}(\xi,x)\ll M_{r,L}
    	 \frac{|\xi|^{L-k}e^{c|\xi|^{k}}}{\Gamma(\frac{L}{k})}
    	\frac{\Theta^{(L)}(X)}{L !}$ . Thus we get $M_{r,L}$.
   \subsubsection{\normalsize  Proof of Proposition \ref{prop-main} -- convergence}
    	  We show convergence of $\sum_{\ell=1}^{\infty}M_{r,\ell}s^{\ell}$.
				First we sum up  notations and definitions again. 
				Let $(Y,z)=(y_1,y_2,\dots,y_m,z)$, $Y^{\alpha}=\prod_{i=1}^{m}y_i^{\alpha_i}$ for 
				$\alpha=(\alpha_1,\dots,\alpha_m)\in \mathbb{N}^m$,
				$Y^{A}=\prod_{i=1}^{m}\big(\prod_{j=1}^{n}y_i^{A_{i,j}}\big)$ for $A=(A_{i,j};(i,j)\in \Delta) \in
				\mathbb{N}^{mn}$. Let
		\begin{equation}
			\begin{aligned} G_{r}(Y,z)&=\sum_{ (\alpha, A); 
			|\alpha|+|A|\geq 1 }%% \big(\sum_{|\alpha|+|A|=q}
				 z^{|\alpha|+|A|} {G_{\alpha,A}'}Y^{\alpha}Y^A + zM_{r,1}
			\end{aligned}
		\end{equation}
		(see \eqref{Mr'}).
		$G_{r}(Y,z)$ is holomorphic in a neighborhood of $(Y,z)=(0,0)$,\; $G_{r}(Y,0)=0$. 
		We use the method of implicit functions used in Gerard, Tahara \cite{GT} 
		and others in order to show convergence 
		of $\sum_{\ell=1}^{\infty}M_{r,\ell}s^{\ell}$. Consider the following system of functional equations
	\begin{equation}
		\begin{aligned}
		 y_r=G_r(Y,z), \; r=1,2, \cdots m.
		\end{aligned}\label{fe}
	\end{equation}
		It follows from the implicit function theorem that there exists
		a unique holomorphic solution $Y(z)=(y_1(z), \cdots,y_m(z))$ of \eqref{fe} with $Y(0)=0$. Let 
		$y_i(z)=\sum_{\ell_i=1}^{\infty}C_{i,\ell_i}z^{\ell_i}$. Then we have 
	\begin{equation*}
		\begin{aligned}\ 
			y_i^{\alpha_i}& =(\sum_{\ell_i=1}^{\infty}C_{i,\ell_i}z^{\ell_i})^{\alpha_i}=\prod_{s=1}^{\alpha_i}
		  \big(\sum_{\ell_i(s)=1}^{\infty}C_{i,\ell_i(s)}z^{\ell_i(s)}\big)\\ &
			=\sum_{\ell=\alpha_i}^{\infty}z^{\ell}\big(\sum_{\sum_{s=1}^{\alpha_i}\ell_i(s)=\ell}
			   \prod_{s=1}^{\alpha_i}C_{i,\ell_i(s)}\big), \\
				Y^{\alpha} &=\prod_{i=1}^{m}y_i^{\alpha_i} 
			 =\sum_{\ell'=|\alpha|}^{\infty}z^{\ell'}\big(\sum_{
			 \sum_{i=1}^{m}\sum_{s=1}^{\alpha_i}\ell_i(s)=\ell'}
			   \prod_{i=1}^{m}\prod_{s=1}^{\alpha_i}C_{i,\ell_i(s)}\big),
		\end{aligned}
	\end{equation*}	
	\begin{equation*}
		\begin{aligned}\ 
			Y^{A}=\prod_{(i,j)\in \Delta}y_{i}^{A_{i,j}}
			=\sum_{\ell''=|A|}^{\infty}z^{\ell''}\big(\sum_{\begin{subarray}\ 
			\sum_{ (i,j) \in \Delta } \sum_{s=1}^{A_{i,j}}\ell_{i,j}(s)=\ell'' \end{subarray}}
		  \prod_{(i,j)\in \Delta} \prod_{s=1}^{A_{i,j}}C_{i,\ell_{i,j}(s)}\big) 
		\end{aligned}
	\end{equation*}	
		and 
	\begin{equation*}
		\begin{aligned}\  
				& Y^{\alpha}Y^{A}=\Big(\sum_{\ell'=|\alpha|}^{\infty}z^{\ell'}\big(\sum_{
					 \sum_{i=1}^{m}\sum_{s=1}^{\alpha_i}\ell_i(s)=\ell'}\prod_{i=1}^{m}
				   \prod_{s=1}^{\alpha_i}C_{i,\ell_i(s)} \big)\Big) \\
			  &	\times \Big(  \sum_{\ell''=|A|}^{\infty}z^{\ell''}\big(\sum_{\sum_
				 {(i,j)\in \Delta}\sum_{s=1}^{A_{i,j}}\ell_{i,j}(s)=\ell''} \prod_{(i,j)\in \Delta}
			   \prod_{s=1}^{A_{i,j}}C_{i,\ell_{i,j}(s)}\big)\Big) \\
			  & = \sum_{\ell=|\alpha|+|A|}^{\infty}z^{\ell}\Big(\sum_{\begin{subarray}\
					 \sum_{i=1}^{m}\sum_{s=1}^{\alpha_i}\ell_i(s)+ \\
					 \sum_{(i',j)\in \Delta}\sum_{s=1}^{A_{i',j}}\ell_{i',j}(s)=\ell  \end{subarray}}
			   \prod_{i=1}^{m}\prod_{s=1}^{\alpha_i}C_{i,\ell_i(s)}  
			  \prod_{(i',j)\in \Delta}	 \prod_{s=1}^{A_{i',j}}C_{i',\ell_{i',j}(s)}\Big).
		\end{aligned}
	\end{equation*}	
 		We have for $|\alpha|+|A| \geq 1$
	\begin{equation*}
		\begin{aligned}\ &
				z^{|\alpha|+|A|}G_{\alpha,A}'Y^{\alpha}Y^A   =\\
	   &  G_{\alpha,A}'	\sum_{\ell=2(|\alpha|+|A|)}^{\infty}
	   z^{\ell}\Big(\sum_{\begin{subarray}\; 
			 \sum_{i=1}^{m}\sum_{s=1}^{\alpha_i}\ell_i(s)+ \\
			 \sum_{(i',j)\in \Delta}\sum_{s=1}^{A_{i,j}}\ell_{i,j}(s)+|\alpha|+|A|=\ell  \end{subarray}} 
			 \big(   \prod_{i=1}^{m}\prod_{s=1}^{\alpha_i}C_{i,\ell_i(s)}  
			  \prod_{(i,j)\in \Delta}	 \prod_{s=1}^{A_{i,j}}C_{i,\ell_{i,j}(s)}\Big).
		\end{aligned}
	\end{equation*}
	We get from 
	 $y_r=\sum_{(\alpha,A);	|\alpha|+|A|\geq 1}z^{|\alpha|+|A|}G'_{\alpha,A}Y^{\alpha}Y^A+zM_{r,1} $ 
	\begin{equation}
		\begin{aligned}\ &
		y_r=\sum_{\ell=1}^{\infty}	C_{r,\ell}z^{\ell}
		=	\sum_{|\alpha|+|A|\geq 1}	z^{|\alpha|+|A|}G_{\alpha,A}'Y^{\alpha}Y^A +zM_{r,1}\\ &
		 =\sum_{\ell=2}^{\infty}z^{\ell} \Big({\sum}_{\ell} 
		 G_{\alpha,A}' \prod_{i=1}^{m}\prod_{s=1}^{\alpha_i}C_{i,\ell_i(s)}  
		  \prod_{(i',j)\in \Delta}	 \prod_{s=1}^{A_{i',j}}C_{i',\ell_{i',j}(s)}\Big)+zM_{r,1},
			\end{aligned}
	\end{equation}
		where
	\begin{equation}
		\begin{aligned}\
			{\sum}_{\ell}=\big\{ &(\alpha,A)\in \mathbb{N}^{m}\times \mathbb{N}^{mn},\; 
			(\ell_{i}(1),\ell_{i}(2),\cdots,\ell_{i}(\alpha_{i}))_{1\leq i \leq m}, \\ &
			((\ell_{i',j}(1),\ell_{i',j}(2),\cdots,\ell_{i',j}(A_{i',j}))_{1\leq i' \leq m\; 1\leq j \leq n}; \\ & 
	 		\sum_{i=1}^m\sum_{s=1}^{\alpha_i}\ell_i(s)+\sum_{\begin{subarray}\ 1\leq i' \leq m \\
			1 \leq j \leq n \end{subarray}}\sum_{s=1}^{A_{i',j}}\ell_{i',j}(s)+|\alpha|+|A|=\ell \big\}.
						\end{aligned}
		\end{equation}
		Hence ${\Sigma}_{\ell}=\Sigma(\ell)$ (\eqref{Sigma_ell}) holds.
			We have $C_{r,1}=M_{1,\ell}$ and $C_{r,\ell}=M_{r,\ell}$ from \eqref{Mr'} for $\ell \geq 2$ and 
			$\sum_{\ell=1}^{\infty}M_{r,\ell}s^{\ell}$ converges. Thus Proposition \ref{prop-main}-(1) is shown. 
			\par 
     Since $\sum_{\ell=1}^{\infty}M_{r,\ell}s^{\ell}$ converges, there exist $A,C,R_0>0$ 
			such that $ | M_{r,\ell}\frac{\Theta^{(\ell)}(X)}{\ell!}|\leq AC^{\ell}$ for $|x|<R_0$. 
			Take $\varepsilon=1$ and let 
			$\widehat{v}_r(\xi,x)=\sum_{\ell=1}^{\infty}	\widehat{v}_r^\ell(\xi,x)$.
			Then there is $c'>0$ such that
				\begin{equation*}
    		\begin{aligned}
    		\sum_{\ell=1}^{\infty} |\widehat{v}_r^\ell(\xi,x)|\leq A \sum_{\ell=1}^{\infty}C^{\ell}
    		 \frac{|\xi|^{	\ell-k}e^{c|\xi|^{k}}}{\Gamma(\ell/k)}\leq  A
    		 \frac{|\xi|^{1-k}e^{c'|\xi|^{k}}}{\Gamma(1/k)}.
    		\end{aligned}
    	\end{equation*}
    	     Thus we get Proposition \ref{prop-main}   \qed
	\section{\normalsize Multisummable functions}  %%9/28 p21
	 			More generally we study Cauchy problem in spaces of multisummable functions.
	 			We give their definition and elementary properties (for the detail \cite{Bal}).
	   \subsection{\normalsize	Acceleration operator}				 
		 		We define $(k',k)$-acceleration operator in a direction $\theta$ due to \'{E}calle. It is denoted 
				by $\mathscr{A}_{k',k,\theta}$. Let $0<k<k'$ and $1/\kappa=1/k-1/k'$.  $\mathscr{A}_{k',k,\theta}$ is
				defined by 
			\begin{equation}
				\begin{aligned}
				  \mathscr{A}_{k',k,\theta}:=\mathscr{B}_{k',\theta}\mathscr{L}_{k,\theta}.
				\end{aligned}
			\end{equation} 
				Let $\widehat{I}=(\theta-\varepsilon,\theta+\varepsilon)\  (\varepsilon>0)$,   
				$I=(\theta-\pi/2\kappa-\varepsilon,\theta+\pi/2\kappa+\varepsilon)$ and $U \subset \mathbb{C}^n$ 
				be a domain.  $\mathscr{A}_{k',k,\theta}$ is defined for $\psi(\xi,x) \in {\rm Exp}(k, S(\widehat{I})
				\times U)$ with
		\begin{equation*}
			\begin{aligned}
				|\psi(\xi,x)|\leq M|\xi|^{s-k}\;( s>0)\;\; \mbox\it {in} \; \{\xi \in S(\widehat{I}); |\xi|\leq 1\}.	
			\end{aligned}
		\end{equation*}			
		  We can extend it to ${\rm Exp}(\kappa, S(\widehat{I}\times U)$
				and  $(\mathscr{A}_{k',k,\theta}\phi)(\xi,x) \in {\mathscr O}(S_0(I)\times U)$. 
				Then  the following lemma holds.
		\begin{lem}
			  Let $\phi_i(\xi,x)\in {\rm Exp}(\kappa, S(\widehat{I})\times U)$ $(i=0,1,2)$ with 
			  $|\phi_i(\xi,x)|\leq C|\xi|^{s-k}\; (s>0)$ in $\{\xi\in S(\widehat{I}); |\xi|<1\}$. 
			  Then %%$(\mathscr{A}_{k',k,\theta}\phi)(\xi,x) \in {\mathscr O}(S_0(I)\times U)$, 
		  	\begin{equation}
					  (	\mathscr{A}_{k',k,\theta}\phi_1)\underset{k'}{*}(\mathscr{A}_{k',k,\theta}\phi_2)
						=\mathscr{A}_{k',k,\theta}(\phi_1 \underset{k}{*}\phi_2 ) \label{eq4.2}
				\end{equation}  
				and				
				\begin{equation}
						\begin{aligned}
						 &  (\xi \partial_{\xi}+k'+1)\mathscr{A}_{k',k,\theta}\phi_0
						 =\mathscr{A}_{k',k,\theta}(\xi \partial_{\xi}+k+1)\phi_0			 
						\end{aligned} \label{eq4.3}
					\end{equation}  hold.\label{lem4.1}
			\end{lem}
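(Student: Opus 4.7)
The strategy is to split the acceleration operator as $\mathscr{A}_{k',k,\theta}=\mathscr{B}_{k',\theta}\mathscr{L}_{k,\theta}$ and then to reduce both \eqref{eq4.2} and \eqref{eq4.3} to identities already available in the paper: the product/convolution rule \eqref{prod} at the two levels $k$ and $k'$, and Lemma \ref{lem2.4} at the two levels. The growth assumptions $|\phi_i(\xi,x)|\le C|\xi|^{s-k}$ near $\xi=0$ together with the ${\rm Exp}(\kappa,\cdot)$ bound at infinity (with $1/\kappa=1/k-1/k'$) are exactly what is needed for $\mathscr{L}_{k,\theta}\phi_i$ to fall in the domain of $\mathscr{B}_{k',\theta}$, so every composition below is well defined on $S_0(I)\times U$.

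For \eqref{eq4.2}, I would first apply $\mathscr{L}_{k,\theta}$ to $\phi_1\underset{k}{*}\phi_2$ and use \eqref{prod} at level $k$ to rewrite this as the ordinary product $(\mathscr{L}_{k,\theta}\phi_1)(\mathscr{L}_{k,\theta}\phi_2)$. Then I apply $\mathscr{B}_{k',\theta}$ to both sides. The left hand side becomes $\mathscr{A}_{k',k,\theta}(\phi_1\underset{k}{*}\phi_2)$ by definition, while the right hand side is $\mathscr{B}_{k',\theta}\big((\mathscr{L}_{k,\theta}\phi_1)(\mathscr{L}_{k,\theta}\phi_2)\big)$; the $k'$-analogue of \eqref{prod} read from right to left, i.e.\ $\mathscr{B}_{k',\theta}(fg)=(\mathscr{B}_{k',\theta}f)\underset{k'}{*}(\mathscr{B}_{k',\theta}g)$ (which is just \eqref{prod} at level $k'$ combined with the mutual inversion $\mathscr{L}_{k',\theta}\mathscr{B}_{k',\theta}={\rm id}$ from Proposition \ref{prop1.5}), turns this into $(\mathscr{A}_{k',k,\theta}\phi_1)\underset{k'}{*}(\mathscr{A}_{k',k,\theta}\phi_2)$.

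For \eqref{eq4.3}, set $g(t,x):=(\mathscr{L}_{k,\theta}\phi_0)(t,x)$, so that $\mathscr{A}_{k',k,\theta}\phi_0=\mathscr{B}_{k',\theta}g$. Applying Lemma \ref{lem2.4} at level $k$ to $g=\mathscr{L}_{k,\theta}\phi_0$ gives $(tg(t))'=\mathscr{L}_{k,\theta}[(\xi\partial_\xi+k+1)\phi_0]$, so $\mathscr{B}_{k',\theta}[(tg)']=\mathscr{A}_{k',k,\theta}[(\xi\partial_\xi+k+1)\phi_0]$. On the other hand, using $g=\mathscr{L}_{k',\theta}(\mathscr{B}_{k',\theta}g)$ (valid by Proposition \ref{prop1.5}) and Lemma \ref{lem2.4} at level $k'$ instead gives $(tg(t))'=\mathscr{L}_{k',\theta}[(\xi\partial_\xi+k'+1)\mathscr{B}_{k',\theta}g]$; taking $\mathscr{B}_{k',\theta}$ of both sides converts this into $\mathscr{B}_{k',\theta}[(tg)']=(\xi\partial_\xi+k'+1)\mathscr{A}_{k',k,\theta}\phi_0$. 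Equating the two expressions obtained for $\mathscr{B}_{k',\theta}[(tg)']$ yields \eqref{eq4.3}.

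The main technical point that needs care is verifying that $g=\mathscr{L}_{k,\theta}\phi_0$ really lies in the class on which $\mathscr{B}_{k',\theta}$ and the inversion formula $g=\mathscr{L}_{k',\theta}\mathscr{B}_{k',\theta}g$ can be applied: one must check the correct vanishing at $t=0$ and the exponential growth of order $k'$ on the appropriate sector. Both follow from the assumed estimates on $\phi_i$ and the exponent arithmetic $1/\kappa=1/k-1/k'$, in the same spirit as the bound $(|\xi|^k-r^k)^{\kappa/k}+r^\kappa\le|\xi|^\kappa$ used in the proof of Lemma \ref{conv}. Once this admissibility is established, the rest of the argument is a purely algebraic chain of the Laplace/Borel identities.
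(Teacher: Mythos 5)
Your algebraic chain for \eqref{eq4.3} is essentially the paper's: the paper also sets $w=\mathscr{L}_{k,\theta}\phi_0$, uses Lemma \ref{lem2.4} to write $(tw)'=\mathscr{L}_{k,\theta}(\xi\partial_{\xi}+k+1)\phi_0$, and then identifies $\mathscr{B}_{k',\theta}(tw)'$ with $(\xi\partial_{\xi}+k'+1)\mathscr{B}_{k',\theta}w$ --- it does this last step by integrating by parts directly inside the Borel integral rather than by your detour through Lemma \ref{lem2.4} at level $k'$ and the inversion $\mathscr{B}_{k',\theta}\mathscr{L}_{k',\theta}={\rm id}$, but the two are equivalent. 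For \eqref{eq4.2} the paper simply cites Balser, so your sketch via \eqref{prod} at the two levels is more explicit than what is actually written there.

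The genuine gap is in your claim that the hypotheses make ``every composition well defined.'' For $\phi_i\in{\rm Exp}(\kappa, S(\widehat{I})\times U)$ with $1/\kappa=1/k-1/k'$ one has $\kappa>k$, so the Laplace integral $\mathscr{L}_{k,\theta}\phi_i=\int_0^{\infty e^{i\theta}}e^{-(\xi/t)^k}\phi_i\,d\xi^k$ \emph{diverges} whenever $\phi_i$ genuinely grows like $e^{c|\xi|^{\kappa}}$; the $\kappa$-growth is not ``exactly what is needed'' --- it is too much for the literal composition $\mathscr{B}_{k',\theta}\mathscr{L}_{k,\theta}$. This is precisely why the paper, just before the lemma, defines $\mathscr{A}_{k',k,\theta}$ only on functions of exponential growth of order $k$ and then says it ``can be extended'' to ${\rm Exp}(\kappa,\cdot)$ (via the acceleration kernel). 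The correct structure, which the paper's proof follows, is: establish the identities first for $\phi_i\in{\rm Exp}(k,\cdot)$, where all of your manipulations are legitimate, and then pass to ${\rm Exp}(\kappa,\cdot)$ by the extension property of $\mathscr{A}_{k',k,\theta}$. Your final paragraph, which asserts that the admissibility of $g=\mathscr{L}_{k,\theta}\phi_0$ ``follows from the assumed estimates and the exponent arithmetic,'' fails at exactly this point; once you insert the restrict-then-extend step, the rest of your argument goes through.
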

			\begin{proof}
			   We show \eqref{eq4.3}. 
			    Assume $\phi_0(\xi,x) \in {\rm Exp}(k,S(\widehat{I})\times U)$ and let 
			    $w(t,x)=\big(\mathscr{L}_{k,\theta}\phi_0\big)(t,x)$. Then  
					\begin{equation*}
						\begin{aligned} &
 						 (tw(t,x))'=\int_0^{\infty}e^{-(\frac{\xi}{t})^k}(\xi\partial_{\xi}+k+1)\phi_0(\xi,x)d\xi^k=
 						 \mathscr{L}_{k,\theta}(\xi\partial_{\xi}+k+1)\phi_0
 						 ,  \\
					 & \mathscr{B}_{k',\theta}(tw(t,x))'=\frac{1}{2\pi i}\int_{\mathcal{C}}
					 e^{(\frac{\xi}{t})^{k'}}(tw(t,x))'dt^{-k'}	\\
					 &=\frac{1}{2\pi i}\int_{\mathcal{C}}\big(k'+1+
					 k'\xi^{k'}t^{-k'}\big)e^{(\frac{\xi}{t})^{k'}}w(t,x)dt^{-k'} 
					  =(k'+1 +\xi \partial_{\xi})\mathscr{B}_{k',\theta}w \\
					  & =(k'+1 +\xi \partial_{\xi})\mathscr{B}_{k',\theta}\mathscr{L}_{k,\theta}\phi_0
					  =(k'+1 +\xi \partial_{\xi})\mathscr{A}_{k',k,\theta}\phi_0.
						\end{aligned}
					\end{equation*}
					This means \eqref{eq4.3} holds for  $\phi_0(\xi,x) \in {\rm Exp}(k,S(\widehat{I})\times U)$. 
			    If $\phi_0(\xi,x) \in {\rm Exp}(\kappa,S(\widehat{I})\times U)$, 
			    equality \eqref{eq4.3} follows from that
			    $\mathscr{A}_{k',k,\theta}$ can be extensible to ${\rm Exp}(\kappa,S(\widehat{I})\times U)$.
			     As for \eqref{eq4.2} it is known (see \cite{Bal}).
			\end{proof}	
		\subsection{\normalsize  {\bf k}-Multisummable functions}  
	 	    	Let $\mbox{\bf k}=(k_1,k_2,\cdots,k_p)$,\;$0<k_1<k_2<\cdots<k_p<k_{p+1}=+\infty$. 
 					Let us define {\bf k}-multisummability of $\widetilde{f}(t,Y)\in {\mathscr O}(\Omega)[[t]]$. 
 					Let $\kappa_i^{-1}=k_{i}^{-1}-k_{i+1}^{-1}$ for $1\leq i \leq p $. 
			   	Let ${\mathbf \theta}=(\theta_1,\theta_{2}, \cdots ,\theta_p)$ with 
					$|\theta_{i}-\theta_{i+1}|\leq  \frac{\pi}{2\kappa_i}$ for $1\leq i \leq p-1$
				  if $p\geq 2$. ${\mathbf \theta}$ is called multidirection.
				Let $\widetilde{u}(t,Y)=\sum_{n=1}^{\infty}u_n(Y) t^n \in \mathscr{O}(\Omega)[[t]]$ with
				$u_0(Y)=0$. The formal
				 $k$-Borel transform $\widetilde{\mathscr{B}}_k \widetilde{u}$ is defined by
		 \begin{equation}
			 	\begin{aligned}
			 		(\widetilde{\mathscr{B}}_k \widetilde{u})(\xi,Y)=
			 		\sum_{n=1}^{\infty}\frac{u_n(Y) \xi^{n-k}}{\Gamma(\frac{n}{k})}.
			 	\end{aligned}
		 \end{equation} 
	       Let $S_i^*=S(\theta_i-\varepsilon_i, \theta_i+\varepsilon_i)\;( \varepsilon_i>0)$.	
			\begin{defn} 
				\begin{enumerate}
			 		\item[{\rm (I)}] Let $\widetilde{f}(t,Y)\in {\mathscr O}(\Omega)[[t]]$ with $\widetilde{f}(0,Y)=0$.
			 		Then $\widetilde{f}(t,Y)$  is said to be ${\bf k}$-multisummable in a multidirection 
			 		${\mathbf \theta}$ in $\Omega$, if there exist $S_i^*\ (1\leq i \leq p)$ and
			 		the followings hold:
   			 \begin{enumerate}
			 		\item[{\rm (I-1)}] Let $f^{1}(\xi,Y)=\big(\widetilde{{\mathscr B}}_{k_1}\widehat{f}\big)(\xi,Y)$.
					 	Then $\xi^{k_1-1}f^{1}(\xi,Y)$ is holomorphic in $\{|\xi|<r_0\}\times \Omega$ and 
					 	$f^{1}(\xi,Y)$ is 
					 	holomorphically extensible to $S^*_1\times\Omega$ such that 
					 	$f^{1}(\xi,Y)\in {\rm Exp}(\kappa_1,S^*_1\times\Omega)$.
			 		\item[\rm(I-2)] Let $j \in \{2,\cdots,p\}$. $f^{j-1}(\xi,Y)\in {\rm Exp}
			 		(\kappa_{j-1},S^*_{j-1}\times\Omega)$ and    
					$f^{j}(\xi,Y)=(\mathscr{A}_{k_{j},k_{j-1},\theta_{j-1}}f^{j-1})(\xi,Y)$. Then   
					$f^{j}(\xi,Y)\in {\rm Exp}(\kappa_{j},S^*_{j}\times\Omega)$.		 		
			 \end{enumerate} 
			 	  {\bf k}-sum of $\widetilde{f}(t,Y)$ in multidirection ${\mathbf \theta}$ is defined
	   		   by $({\mathscr L}_{k_p,\theta_p}f^{p})(t,Y)$ and denoted simply by $f(t,Y)$.
					$f(t,Y)$ is also said to be ${\bf k}$-multisummable.  
%				$f(t,Y) \in {\mathscr O}(S_{p,0 }\times \Omega)$, 
%				$S_p=S(\theta_p-\pi/2k_p-\varepsilon_p, \theta_p+\pi/2k_p+\varepsilon_p) $. 
%				$f(t,Y)$ is also said to be ${\bf k}$-multisummable.  
			 \item[{\rm (II)}] Suppose $\widetilde{f}(0,Y)\not =0$. Let 
			 $\widetilde{f}_*(t,Y)=\widetilde{f}(t,Y)-\widetilde{f}(0,Y)$. If $\widetilde{f}_*(t,Y)$ is 
			  ${\bf k}$-multisummable in a multidirection ${\mathbf \theta}$,  then
			$\widetilde{f}(t,Y)$ is said to be ${\bf k}$-multisummable.   {\bf k}-sum of $\widetilde{f}(t,Y)$ is defined
	   		by $\widetilde{f}(0,Y)+({\mathscr L}_{k_p,\theta_p}f^{p}_*)(t,Y).$ 
	   		 \end{enumerate}  
	   		\end{defn}
			We have	$f(t,Y) \in {\mathscr O}(S_{p,0 }\times \Omega)$, 
				$S_p=S(\theta_p-\pi/2k_p-\varepsilon_p, \theta_p+\pi/2k_p+\varepsilon_p) $.
	   	 \begin{rem} 
	   		  \begin{enumerate}
			 	\item[{\rm(1)}] Let $\widetilde{f}(t,Y)$ be ${\bf k}$-multisummable. Then
							$\widetilde{f}(t,Y)\in 
			 				{\mathscr O}(\Omega)[[t]]_{\frac{1}{k_1}}$. 
			 	\item[{\rm(2)}]	 Let $\widetilde{f}(t,Y)$ be {\bf k}-multisummable with $\widetilde{f}(0,Y)=0$.  
				We denote $(\widetilde{{\mathscr B}}_{k_1}\widetilde{f})(\xi,Y)$ by $\widehat{f}(\xi,Y)$ and 
						\begin{equation}
								\begin{aligned} &
						  			\mathscr{M}_{{\bf k},{\bf \theta}}:=
									\mathscr{L}_{k_p,\theta_p}\mathscr{A}_{k_p,k_{k_{p-1}},\theta_{p-1}}\cdots
									\mathscr{A}_{k_{2},k_1,\theta_1}.
							\end{aligned} 
						\end{equation}
						Then $f(t,Y)=(\mathscr{M}_{{\bf k},{\bf \theta}}\widehat{f})(t,Y)$.
			 			 We may say that 
			 			 $f(t,Y)$ is represented by repeated (multi) Laplace Borel transform. More precisely 
			 	\begin{equation*}
						\begin{aligned}\ 
					  & f^1(\xi,Y)=\widehat{f}(\xi,Y)
						& f^{j}(\xi,Y)=(\mathscr{A}_{k_{j},k_{j-1},\theta_{j-1}}f^{j-1})(\xi,Y) \quad    2 \leq  j\leq p 
					\end{aligned}
				\end{equation*}
				  and	$f(t,Y)=(\mathscr{L}_{k_p,\theta_p}f^{p})(t,Y)$.
			 	 \end{enumerate}   
	   		 \end{rem}
			 		Let $\widetilde{w}(t)\in \mathbb{C}[[t]]$ with $\widetilde{w}(0)=0$ be ${\bf k}$-multisummable,
			 		 $\widehat{w}=\widetilde{{\mathscr B}}_{k_1}\widetilde{w}$ and $w(t)$ be {\bf k}-sum of 
			 		 $\widetilde{w}(t)$. Then from  Lemma \ref{lem4.1} we have an important relation
				\begin{equation}
				\begin{aligned}\  
				   & (t w(t))'=\mathscr{L}_{k_p,\theta_p}(\xi \partial_{\xi}+k_{p}+1)w^{p}\\
						&=\mathscr{L}_{k_p,\theta_p} \mathscr{A}_{k_p,k_{p-1},\theta_{p-1}}(\xi \partial_{\xi}+k_{p-1}+1)
						w^{p-1} \\
						&=\mathscr{L}_{k_p,\theta_p} \mathscr{A}_{k_p,k_{p-1},\theta_{p-1}}\cdots
							\mathscr{A}_{k_{j+1},k_{j},\theta_{j}}
							(\xi \partial_{\xi}+k_{j}+1){w}^{j} \\	
						&=\mathscr{L}_{k_p,\theta_p} \mathscr{A}_{k_p,k_{p-1},\theta_{p-1}}
							\mathscr{A}_{k_{p-1},k_{p-2},\theta_{p-2}} \cdots	\cdots
							\mathscr{A}_{k_{2},k_1,\theta_1}(\xi \partial_{\xi}+k_{1}+1)\widehat{w}. 	
				\end{aligned}\label{w}
			\end{equation} 
			\section{\normalsize Cauchy problem in ${\bf k}$-multisummable function spaces}
				 We study (CP) (\eqref{CP}) in spaces of multisummable functions. 
				 Let $\Omega_0=\{x\in \mathbb{C}^n; |x|<R_0\}$,  
			  $\Omega'=\{(U,P)\in \mathbb{C}^n\times \mathbb{C}^{mn}; |U|, |P|<R_1 \}$ and 
			  $\Omega=\Omega_0\times\Omega'$. In this section we assume that $\{f_i(t,x,U,P)\}_{i=1}^m$ in (CP)
			  are ${\bf k}$-multisummable in a multidirection ${\bf \theta}$ in $\Omega$.	
			  We reduce (CP) so that $u_{i}(0,x)=\partial_{t}u_{i}(0,x)=0$ for $1\leq i \leq m$ and 
			  let $u_i(t,x)=tv_i(t,x)$ and $g_i(t,x, V, P):=f(t,x, tV, tP)$. Then
			    we can transform (CP) to the following system of equations as before (see \eqref {CP-1}),
			\begin{equation}
				\begin{aligned} \
				 & \partial_t (t v_i)=g_i(t,x, V, \nabla_xV ) \\
				 & =\sum_{\ell=0}^{\infty}\big(\sum_{\begin{subarray}\ (\alpha,A)\\ |\alpha|+|A|=\ell
					  \end{subarray}}g_{i,\alpha,A}(t,x) 
				  	V^{\alpha} (\nabla_x V^{A}\big) %% + g_{i,0,0}(t,x), 
				  	\quad 1\leq i \leq m, \\
				 & V(t,x)=(v_1(t,x),\dots,v_m(t,x)) \quad  V(0,x)=0 	
				\end{aligned}  \label{CP-M}
			\end{equation}
				with $ g_{i,0,0}(0,x)=0 $. 
			\begin{lem}
         $\{g_i(t,x, V, P)\}_{i=1}^m$ are ${\bf k}$-multisummable in multidirection ${\bf \theta}$. 
         \label{lem5.1}
       \end{lem}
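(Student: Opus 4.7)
The plan is to mirror the proof of Lemma \ref{lem2.2} (the Borel summable case) at each level of the multisummable hierarchy. I would begin by Taylor-expanding
\begin{equation*}
 f_i(t,x,U,P) = \sum_{(\alpha,A)} f_{i,\alpha,A}(t,x)\,U^{\alpha} P^{A}
\end{equation*}
and show each coefficient $f_{i,\alpha,A}(t,x)$ is $\mathbf{k}$-multisummable with uniform Cauchy-type bounds in $(\alpha,A)$; then show $g_{i,\alpha,A}(t,x) = t^{|\alpha|+|A|}f_{i,\alpha,A}(t,x)$ remains $\mathbf{k}$-multisummable with an improved $|\xi|^{|\alpha|+|A|}$ factor at every level; and finally sum over $(\alpha,A)$ to conclude.

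For the first point, by hypothesis the iterated transforms $f_i^{1} = \widetilde{\mathscr{B}}_{k_1}\widetilde{f}_i$ and $f_i^{j} = \mathscr{A}_{k_j,k_{j-1},\theta_{j-1}}f_i^{j-1}$ are holomorphic in $(x,U,P) \in \Omega$ with exponential envelopes of order $\kappa_j$ in $\xi$. Because $(U,P) \mapsto f_i^{j}(\xi,x,U,P)$ is holomorphic on $\Omega'$ and each of $\mathscr{B}_{k_1}$, $\mathscr{L}_{k_j,\theta_j}$, $\mathscr{B}_{k_{j+1},\theta_j}$ is a contour integral in $t$ (independent of $(U,P)$), Cauchy's formula in $(U,P)$ commutes with each transform. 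Therefore the Taylor coefficient
\begin{equation*}
 f_{i,\alpha,A}^{j}(\xi,x) := \frac{1}{(2\pi i)^{m+mn}}\oint\oint f_i^{j}(\xi,x,U,P)\,U^{-\alpha-\mathbf{1}}P^{-A-\mathbf{1}}\,dU\,dP
\end{equation*}
inherits the multisummability chain of $f_i$, and Lemma \ref{lem1.6} gives uniform bounds of size $O(R_1^{-(|\alpha|+|A|)})$ times the exponential envelope.

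For the second point, multiplication by $t^{|\alpha|+|A|}$ corresponds on the $k_1$-Borel side to convolution producing an extra factor $|\xi|^{|\alpha|+|A|}/\Gamma((|\alpha|+|A|)/k_1)$ via Lemma \ref{conv}. Propagating through each acceleration $\mathscr{A}_{k_{j+1},k_j,\theta_j}$ by the analogous Gamma identity yields the level-$j$ analogue of Lemma \ref{lem2.3}:
\begin{equation*}
 |g_{i,\alpha,A}^{j}(\xi,x)| \leq G_j C_j^{|\alpha|+|A|}\,\frac{|\xi|^{|\alpha|+|A|-k_j}\,e^{c_j|\xi|^{\kappa_j}}}{\Gamma((|\alpha|+|A|)/k_j)}.
\end{equation*}
The series $\sum_{(\alpha,A)} g_{i,\alpha,A}^{j}(\xi,x)\,V^{\alpha} P^{A}$ then converges for $|V|,|P|$ sufficiently small, retains the envelope $e^{c_j|\xi|^{\kappa_j}}$, and by linearity of each acceleration on these weighted spaces equals the $j$-th transform of $g_i$. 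Applying $\mathscr{L}_{k_p,\theta_p}$ to the $p$-th level recovers $g_i(t,x,V,P) = f_i(t,x,tV,tP)$.

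The main obstacle is the Gamma-function bookkeeping under the composition $\mathscr{A}_{k_{j+1},k_j,\theta_j} = \mathscr{B}_{k_{j+1},\theta_j}\mathscr{L}_{k_j,\theta_j}$: one must check that $|\xi|^{|\alpha|+|A|}/\Gamma((|\alpha|+|A|)/k_j)$ transforms cleanly into $|\xi|^{|\alpha|+|A|}/\Gamma((|\alpha|+|A|)/k_{j+1})$ while the exponential envelope upgrades from order $\kappa_j$ to $\kappa_{j+1}$, and that the Cauchy-type bounds in $(\alpha,A)$ survive each step so that the series can be resummed in $(V,P)$ at every level. This replaces the single Lemma \ref{lem2.3} estimate of the Borel case with $p$ consecutive applications, which is presumably why the authors defer the full verification to the Appendix.
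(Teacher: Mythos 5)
Your proposal is correct and follows essentially the same route as the paper's Appendix proof (Lemmas~\ref{lem6-1} and \ref{lem6.2}): expand in $(U,P)$, observe that the Borel and acceleration transforms act only on the $t$ (resp.\ $\xi$) variable so that Cauchy's inequality applied to $f_i^{s}(\xi,x,U,P)$ yields coefficient bounds of size $O(R^{-|\alpha|-|A|})\,|\xi|^{1-k_s}e^{c|\xi|^{\kappa_s}}$ at every level $s$, convert the factor $t^{|\alpha|+|A|}$ into a $k_s$-convolution with $\xi^{|\alpha|+|A|-k_s}/\Gamma\bigl((|\alpha|+|A|)/k_s\bigr)$ using Lemma~\ref{conv} together with the fact that $\mathscr{A}_{k_{s+1},k_s,\theta_s}$ carries this kernel to its level-$(s+1)$ analogue and commutes with convolution (Lemma~\ref{lem4.1}), and then resum in $(V,P)$. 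The only organizational caveat is that the paper never ``propagates'' the estimate on $g^{j}_{i,\alpha,A}$ through an acceleration step (which would require a uniform-in-$(\alpha,A)$ operator bound for $\mathscr{A}_{k_{j+1},k_j,\theta_j}$ on the weighted spaces, not proved anywhere in the paper); instead it obtains the level-$s$ bound in one stroke from the level-$s$ bound on $f^{s}_{i,\alpha,A}$ and a single application of Lemma~\ref{conv} at that level, and it also records the short reduction to the case $f_i(0,x,U,P)\equiv 0$ by splitting off $f_i(0,x,tU,tP)$, which is holomorphic at $t=0$ --- two points you should make explicit if you carry out your plan.
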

       The proof is given in Appendix.        
			  	Let $\widetilde{V}(t,x)=(\widetilde{v}_1(t,x),\dots,\widetilde{v}_m(t,x)), 
				\widetilde{v}_i(t,x) \in \mathscr{O}(\Omega_0)[[t]]$ be a unique formal power series solution of
				\eqref{CP-M}. Then
			\begin{thm}
					      There exists  $\omega_0=\{x\in {\mathbb C}^n; |x|<r\}$ such that 
						   $\{\widetilde{v}_i(t,x)\}_{i=1}^m$ are {\bf k}-multisummable  in the multidirection 
						   ${\bf \theta}$ in $\omega_0$. 	
										\label{th-2}
			\end{thm}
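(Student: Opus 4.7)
My proof plan is to iterate the Borel-summability argument of Sections 2--3 through the $p$ levels of the acceleration scheme $\mathscr{M}_{{\bf k}, {\bf \theta}}$ described in Remark 4.3.

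At Level $1$, I would apply $\widetilde{\mathscr{B}}_{k_1}$ to \eqref{CP-M}. By Lemma \ref{lem2.4} and the Laplace--convolution correspondence \eqref{prod}, the sought $\widehat{v}^1_r := \widetilde{\mathscr{B}}_{k_1}\widetilde{v}_r$ should satisfy a $k_1$-convolution equation of the same shape as \eqref{Ceq}, with coefficients $\widehat{g}^{(1)}_{r,\alpha,A}(\xi,x) := (\widetilde{\mathscr{B}}_{k_1} g_{r,\alpha,A})(\xi,x)$. By Lemma \ref{lem5.1} and the definition of ${\bf k}$-multisummability, these coefficients lie in ${\rm Exp}(\kappa_1, S^*_1 \times \Omega_0)$ and admit estimates analogous to Lemma \ref{lem2.3}, with the exponential factor $e^{c|\xi|^{k_1}}$ replaced by $e^{c|\xi|^{\kappa_1}}$. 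Crucially, Lemma \ref{conv} was stated for $k \leq \kappa$, so the $k_1$-convolution of functions with $e^{c|\xi|^{\kappa_1}}$-growth preserves that growth with the correct $\Gamma$-quotient. Hence the majorant apparatus of Section 3, together with the implicit-function argument of Section 3.3.1, applies verbatim: introduce $\varepsilon$, expand $\widehat{v}^1_r = \sum_{\ell\geq 1} \widehat{v}^1_{r,\ell}\varepsilon^\ell$, derive estimates of the form \eqref{M-est} with $k=k_1$ in the $\Gamma$-factor and $\kappa_1$ in the exponential, and prove convergence of $\sum M^{(1)}_{r,\ell} s^\ell$ near $s=0$. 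Setting $\varepsilon=1$ gives $\widehat{v}^1_r \in {\rm Exp}(\kappa_1, S^*_1\times\omega_0)$ on some neighborhood $\omega_0$ of $x=0$.

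For Levels $j=2,\ldots,p$, set $\widehat{v}^j_r := \mathscr{A}_{k_j,k_{j-1},\theta_{j-1}}\widehat{v}^{j-1}_r$ and analogously $\widehat{g}^{(j)}_{r,\alpha,A}:=\mathscr{A}_{k_j,k_{j-1},\theta_{j-1}}\widehat{g}^{(j-1)}_{r,\alpha,A}$. By Lemma \ref{lem4.1}, equations \eqref{eq4.2}--\eqref{eq4.3}, the acceleration intertwines $k_{j-1}$-convolution with $k_j$-convolution and replaces $(\xi\partial_\xi+k_{j-1}+1)$ by $(\xi\partial_\xi+k_j+1)$. Therefore $\widehat{v}^j_r$ obeys a $k_j$-convolution equation of the same form as at Level $1$, with coefficients in ${\rm Exp}(\kappa_j, S^*_j\times\Omega_0)$. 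Repeating the Level~$1$ argument (shrinking $\omega_0$ as needed) yields $\widehat{v}^j_r\in {\rm Exp}(\kappa_j, S^*_j\times\omega_0)$. The sequence $\widehat{v}^1_r,\ldots,\widehat{v}^p_r$ then matches the data demanded by Definition 4.2, so $v_r(t,x) := \mathscr{L}_{k_p,\theta_p}\widehat{v}^p_r = (\mathscr{M}_{{\bf k},{\bf \theta}}\widehat{v}^1_r)(t,x)$ is the ${\bf k}$-sum of $\widetilde{v}_r$ in the multidirection ${\bf \theta}$. That $V(t,x)$ thus defined actually solves \eqref{CP-M} follows from \eqref{prod} and relation \eqref{w}.

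The main obstacle is the careful bookkeeping through each acceleration: one must confirm that the exponential constant $c$ in $e^{c|\xi|^{\kappa_j}}$ can be controlled uniformly (at worst enlarged by a fixed factor at each level) and that the majorant constants $\{M^{(j)}_{r,\ell}\}$ produced at Level $j$ remain summable via the same kind of implicit function equation used in Section 3.3.1. Both points reduce to verifying that the acceleration $\mathscr{B}_{k_j,\theta_{j-1}}\mathscr{L}_{k_{j-1},\theta_{j-1}}$ preserves the basic factorised structure $e^{c|\xi|^{\kappa_j}}\Theta^{(\ell)}(X)/\ell!$ up to a redefinition of constants -- this in turn rests on Proposition \ref{prop1.5} applied at each level and on the slack $k_j\leq\kappa_j$ permitted by Lemma \ref{conv}. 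Once this is in place, the rest of the argument is an exact repetition of the single-$k$ case.
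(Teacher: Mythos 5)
Your proposal is correct and follows essentially the same route as the paper: reduce to a $k_1$-convolution equation, run the majorant/implicit-function machinery of Section 3 with $e^{c|\xi|^{\kappa_1}}$ in place of $e^{c|\xi|^{k}}$, then accelerate level by level using Lemma \ref{lem4.1} to intertwine the convolutions and the Euler operator, re-solving the resulting $k_j$-convolution equation at each stage to obtain the holomorphic prolongation to the infinite sector $S^*_j$ with growth of order $\kappa_j$. This matches the paper's construction of $(C_1),\dots,(C_p)$ and Propositions \ref{prop-1} and \ref{prop-s}.
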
 
			   Let us prepare to show Theorem \ref{th-2}.  Let   ${g}_i(t,x,V,P)$ be $\bf k$-sum of
			   $\widetilde{g}_i(t,x,V,P)\in \mathscr{O}(\Omega)[[t]]$, 
			\begin{equation*} \left \{
					\begin{aligned}\ 
					  & g^1_i(\xi,x,V,P)=\widehat{g}_i(\xi,x,V,P), \;\;  
					  \widehat{g}_i(\xi,x, V,P)=(\widetilde{\mathscr{B}}\widetilde{g}_i)(\xi,x, V, P), \\
						& g_i^s(\xi,x, V, P)=(\mathscr{A}_{k_{s},k_{s-1},\theta_{s-1}}g^{s-1}_i)(\xi,x,V,P) 
						\quad    2 \leq  s \leq p, \\
					  & g_i(t,x,V,P)=(\mathscr{L}_{k_p,\theta_p}g^p_i)(t,x,V,P)=
					  (\mathscr{M}_{{\bf k},{\bf \theta}} \widehat{g}_i)(t,x, V, P)
				\end{aligned}\right.
			\end{equation*}	
					and 	 
			\begin{equation*}
				\begin{aligned}&
				 g_i^s(\xi,x, V, P)=\sum_{\begin{subarray}\ (\alpha,A) \end{subarray}}g_{i,\alpha,A}^s(\xi,x)
				  	V^{\alpha}P^{A}\quad 1\leq i \leq m.				 
				\end{aligned}
			\end{equation*}
  			  We will repeat the methods similar to Borel summable case $p$-times for the proof of Theorem 
  			  \ref{th-2}. Firstly let us introduce systems of convolution equations 
  			  {$(C_s)\; (1\leq s \leq p)$} (see \eqref{Ceq}). 
					Let $W(\xi,x)=({w}_1(\xi,x),\dots,{w}_m(\xi,x))$ and    
  		\begin{equation}%%\tag{$C_s$}
				\begin{aligned}   {\rm (C_s)} & \qquad(\xi \partial_{\xi}+k_s+1){w}_r(\xi,x)\\
					 &= \sum_{\ell=0}^{\infty}\big(\sum_{|\alpha|+|A|=\ell}{g}_{r,\alpha,A}^{s}(\xi,x)
					 \underset{k_s}{*} W^{*\alpha}\underset{k_s}{*} 
				  (\nabla_x W)^{*A}\big) %% +g_{i,0,0}^{s}(\xi,x) 
					\quad 1\leq r \leq m.
				\end{aligned}\label{Cs}
			\end{equation}
			Let $\widetilde{V}(t,x)=(\widetilde{v}_1(t,x),\dots,\widetilde{v}_m(t,x))$, 
			 $\widetilde{v}_i(t,x) \in \mathscr{O}(\Omega_0)[[t]]$, be a formal  
			 solution of \eqref{CP-M}, then 
			$(\widetilde{\mathscr B}_{k_1}\widetilde{V})(\xi,x)\in  {\xi}^{1-k_1}{\mathscr O}(\Omega_0)[[\xi]]^m$ 
			  satisfies $(C_1)$ formally.    
				We will be able to show Theorem \ref{th-2} by solving equations  $(C_1), \dots, (C_p)$ 
				successively. 				
	      The following estimates for $\{{g}^s_{r,\alpha,A}(\xi,x); 1\leq s \leq p\} $ hold.
	      ${\Theta(X)=\theta(X/R)}\;\; 0<R<R_0$, $X=\sum_{i=1}^n x_i$, 
	      $\theta(\tau)$  is introduced in subsection 3.1 Majorant function (\eqref{Theta}).	       
      \begin{prop}
				  There exist positive constants $G_{r,\alpha,A}(s)$ and $c$  such that %for $|\alpha|+|A|\geq 1$
		  	\begin{equation}\left \{
					\begin{aligned}
						 &	{g}^s_{r,\alpha,A}(\xi,x)\ll G_{r,\alpha,A}(s)
						 \frac{\xi^{|\alpha|+|A|-k_s}e^{c|\xi|^{\kappa_s}}}
							{\Gamma(\frac{|\alpha|+|A|}{k_s})}\frac{\Theta^{|\alpha|+|A|-1}(X)}{(|\alpha|+|A|-1)!}
							\quad		|\alpha|+|A|\geq 1, 	\\
							& {g}^s_{r,0,0}(\xi,x)\ll G_{r,0,0}(s)
						 \frac{\xi^{1-k_s}e^{c|\xi|^{\kappa_s}}}{\Gamma(\frac{1}{k_s})}{\Theta(X)},
						  \quad \xi\in S_{s}^*=S(\theta_s-\varepsilon_s, \theta_s+\varepsilon_s)
					\end{aligned}\right . \label{gs-est}
				\end{equation} 
					and $\sum_{\alpha, A}G_{r,\alpha,A}(s)V^{\alpha} P^{A}$ converges in a neighborhood of
					$(V,P)=(0,0)$. \label{prop5.3}
			\end{prop}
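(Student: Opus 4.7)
The plan is to argue by induction on $s\in\{1,\dots,p\}$, propagating the estimates \eqref{gs-est} through the chain of acceleration operators defining $g^s_i$ from $g^1_i=\widehat{g}_i$. The base case $s=1$ is essentially Lemma \ref{lem2.3} converted into majorant form: the coefficients $\widehat{g}_{r,\alpha,A}(\xi,x)$ are holomorphic in $x\in\Omega_0$, so replacing the $x$-dependence by $\Theta(X)$ via Lemma \ref{lem3.2} produces the factor $\Theta^{(|\alpha|+|A|-1)}(X)/(|\alpha|+|A|-1)!$ with constants $G_{r,\alpha,A}(1)=MC^{|\alpha|+|A|}B^{|\alpha|+|A|-1}$, whose generating series converges near $(V,P)=(0,0)$ since Lemma \ref{lem5.1} guarantees that $\widetilde{g}_i(t,x,V,P)$ is analytic in $(V,P)$ in a fixed polydisc.

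For the inductive step I would isolate a one-variable transfer estimate for acceleration: if $\phi(\xi)\in {\rm Exp}(\kappa_{s-1},S_{s-1}^*)$ satisfies
\begin{equation*}
|\phi(\xi)|\leq K\,\frac{|\xi|^{\ell-k_{s-1}}\,e^{c|\xi|^{\kappa_{s-1}}}}{\Gamma(\ell/k_{s-1})},\quad \ell\geq 1,
\end{equation*}
then $\mathscr{A}_{k_s,k_{s-1},\theta_{s-1}}\phi$ belongs to ${\rm Exp}(\kappa_s,S_s^*)$ with
\begin{equation*}
|(\mathscr{A}_{k_s,k_{s-1},\theta_{s-1}}\phi)(\xi)|\leq C_{\ast}\,K\,\frac{|\xi|^{\ell-k_s}\,e^{c'|\xi|^{\kappa_s}}}{\Gamma(\ell/k_s)},
\end{equation*}
where $C_{\ast}$ and $c'$ depend only on the sector and on $c$, but crucially not on $\ell$. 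Granted this, I would apply acceleration coefficient-by-coefficient in the Taylor expansion of $g^{s-1}_i$ in $(V,P)$; since the majorant $\Theta^{(|\alpha|+|A|-1)}(X)/(|\alpha|+|A|-1)!$ is purely in $x$, it is unaffected by the acceleration in $\xi$, and \eqref{gs-est} at level $s$ follows with $G_{r,\alpha,A}(s)=C_{\ast}\,G_{r,\alpha,A}(s-1)$. Consequently the generating series $\sum G_{r,\alpha,A}(s)V^\alpha P^A$ differs from $\sum G_{r,\alpha,A}(s-1)V^\alpha P^A$ only by a uniform constant factor and retains its convergence.

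The transfer estimate is the main obstacle, and where I expect most of the work to lie. Writing $\mathscr{A}_{k_s,k_{s-1},\theta_{s-1}}=\mathscr{B}_{k_s,\theta_{s-1}}\mathscr{L}_{k_{s-1},\theta_{s-1}}$, one first verifies that $(\mathscr{L}_{k_{s-1},\theta_{s-1}}\phi)(t)$ is holomorphic in a suitably wide sector and behaves like $t^\ell$ near $t=0$, with growth at infinity governed by $\kappa_{s-1}$; the normalization by $\Gamma(\ell/k_{s-1})$ in the hypothesis is precisely what cancels the Gamma-function arising from the Laplace integral of $\xi^{\ell-k_{s-1}}$ against the kernel $\exp(-(\xi/t)^{k_{s-1}})$. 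Then applying $\mathscr{B}_{k_s,\theta_{s-1}}$ via its defining contour uses the identity $1/\kappa_{s-1}=1/k_{s-1}-1/k_s$ together with a direct kernel estimate on the acceleration operator (standard in \'{E}calle's theory, see \cite{Bal}) to recover both the prefactor $|\xi|^{\ell-k_s}/\Gamma(\ell/k_s)$ and the upgraded exponential $e^{c'|\xi|^{\kappa_s}}$. The delicate point is proving that $C_{\ast}$ is independent of $\ell$: any hidden $\ell$-dependence would be raised to the power $p-1$ after iteration and would destroy the convergence of $\sum G_{r,\alpha,A}(p)V^\alpha P^A$, which is the statement that drives the whole multisummable construction in Section 5.
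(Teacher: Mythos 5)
Your overall architecture --- induction on $s$ with the estimate \eqref{gs-est} pushed through the chain of accelerations --- is not the paper's, and the step you yourself flag as ``the main obstacle'' is a genuine gap rather than a routine verification. You need: if $|\phi(\xi)|\leq K|\xi|^{\ell-k_{s-1}}e^{c|\xi|^{\kappa_{s-1}}}/\Gamma(\ell/k_{s-1})$ on $S_{s-1}^*$, then $|(\mathscr{A}_{k_s,k_{s-1},\theta_{s-1}}\phi)(\xi)|\leq C_*K|\xi|^{\ell-k_s}e^{c'|\xi|^{\kappa_s}}/\Gamma(\ell/k_s)$ on $S_s^*$ with $C_*$ and $c'$ independent of $\ell$. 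The exact identity $\mathscr{A}_{k_s,k_{s-1},\theta_{s-1}}\big(\xi^{\ell-k_{s-1}}/\Gamma(\ell/k_{s-1})\big)=\xi^{\ell-k_s}/\Gamma(\ell/k_s)$ holds only for the bare monomial; once the factor $e^{c|\xi|^{\kappa_{s-1}}}$ is present the kernel estimate must be redone and its uniformity in $\ell$ checked, and as you correctly observe any hidden $\ell$-dependence would destroy the convergence of $\sum G_{r,\alpha,A}(s)V^{\alpha}P^{A}$. Nothing in the paper supplies this uniform estimate, and your proposal does not prove it. There is a second, quieter gap: you accelerate coefficient-by-coefficient, but $g_i^s$ is defined as the acceleration of the full function $g_i^{s-1}(\xi,x,V,P)$; interchanging $\mathscr{A}_{k_s,k_{s-1},\theta_{s-1}}$ with the Taylor expansion in $(V,P)$ itself requires uniform control in $(\alpha,A)$ of exactly the kind you are trying to establish.

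The paper's Appendix avoids both problems by never accelerating an individual Taylor coefficient. Since $\{f_i\}$ are ${\bf k}$-multisummable on $\Omega=\Omega_0\times\Omega'$, the accelerated functions of \emph{all} variables obey a single bound $|f_i^{s}(\xi,x,U,P)|\leq K|\xi|^{1-k_s}e^{c|\xi|^{\kappa_s}}$ on $S_s^*\times\Omega$, and Cauchy's inequality in $(U,P)$ (as in Lemma \ref{lem1.6}) gives $|f^{s}_{i,\alpha,A}(\xi,x)|\leq K R^{-(|\alpha|+|A|)}|\xi|^{1-k_s}e^{c|\xi|^{\kappa_s}}$ --- no $\Gamma$-denominator and no transfer estimate needed. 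The $\Gamma$-denominator in \eqref{gs-est} comes instead from the substitution $g_{i,\alpha,A}(t,x)=t^{|\alpha|+|A|}f_{i,\alpha,A}(t,x)$: by Lemma \ref{lem4.1} (acceleration commutes with $k$-convolution) this becomes, at level $s$,
\begin{equation*}
g^{s}_{i,\alpha,A}(\xi,x)=f^{s}_{i,\alpha,A}(\xi,x)\underset{k_s}{*}\frac{\xi^{|\alpha|+|A|-k_s}}{\Gamma\big((|\alpha|+|A|)/k_s\big)},
\end{equation*}
and Lemma \ref{conv} then produces the factor $|\xi|^{|\alpha|+|A|+1-k_s}/\Gamma\big((|\alpha|+|A|+1)/k_s\big)$ directly, with the geometric constant $KR^{-(|\alpha|+|A|)}$ guaranteeing convergence of the generating series. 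The majorant factor $\Theta^{(|\alpha|+|A|-1)}(X)/(|\alpha|+|A|-1)!$ is inserted afterwards via Lemma \ref{lem3.2}, exactly as in your base case. If you wish to keep your inductive scheme you must actually prove the $\ell$-uniform acceleration estimate; otherwise the convolution identity above is the shorter and safer road.
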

				  The proof is given in Appendix. 
	       The following arguments are similar to the preceding sections. %%(the Borel summable case).	
	       In order to study $(C_s)$ (\eqref{Cs}) we introduce an auxiliary parameter $\varepsilon$ 
	        as Borel summable case  (see \eqref{Conv-e}), 
			\begin{equation}%%%\tag{$C_{s,\varepsilon}$}
				\begin{aligned} \ {\rm (C_{s,\varepsilon}}) \quad 
				& (\xi \partial_{\xi}+k_s+1){w}_r(\xi,x,\varepsilon)\\
						 =&\sum_{\ell=1}^{\infty} \varepsilon^{\ell}
						 \big(\sum_{|\alpha|+|A|=\ell}{g}_{r,\alpha,A}^{s}(\xi,x)
						 \underset{k_s}{*}W^{*\alpha}\underset{k_s}{*} 
					  (\nabla_x W)^{*A}\big)+\varepsilon g_{i,0,0}^{s}(\xi,x).
				\end{aligned} \label{C_e'*}
			\end{equation}
			We will solve \eqref{C_e'*} and get a solution $W(\xi,x)$ of \eqref{Cs} by taking $\varepsilon=1$.
	      Let	${w}_{r}(\xi,x,\varepsilon)=\sum_{\ell=1}^{\infty}{w}_{r,\ell}(\xi,x)\varepsilon^{\ell}$\; 
	    $(1\leq r \leq m)$. By the same method in subsection 2.2.1 {\it ``
	    Calculation of $\mathcal{G}_{r,\ell}(\xi,x)$"} (see	\eqref{GL}, \eqref{GL-1} and \eqref{GL-2}),  
	    we have for $|\alpha|+|A|\geq 1$
   	\begin{equation}%%\tag{$G_{s,\ell}$}
			\begin{aligned}
				\ &  
				\varepsilon^{|\alpha|+|A|}{g}_{r,\alpha,A}^s(\xi,x)\underset{k_s}{*} 
				 W^{{*} \alpha}
				\underset{k_s}{*}  (\nabla _x  W)^{* A}=
				\sum_{\ell=2(|\alpha|+|A|)}^{\infty}\varepsilon^{\ell} 
				\Big( \sum_{\begin{subarray}\ 
				\ell'+\ell''+|\alpha|+|A|=\ell\end{subarray}} 
				{g}_{r,\alpha,A}^s(\xi,x)  \\ & \underset{k_s}{*} 
		    \Big ( \sum_{ ({\boldsymbol \alpha},\ell')}
			 	\prod_{1\leq i\leq m}^*\big(w_{i,\ell_i(1)}(\xi,x)
			 	\underset{k_s}{*} w_{i,\ell_{i}(2)}(\xi,x)\underset{k_s}{*}  \cdots 
		 		  \underset{k_s}{*} w_{i,\ell_{i}(\alpha_i)}(\xi,x)\big) \Big)\\
			 	& \underset{k_s}{*} 	\Big (\sum_{ (A,\ell'')} 
			 	\prod_{\begin{subarray}\ (i,j)\in \Delta 	\end{subarray}}^{*}\partial_{x_j}w_{i,\ell_{r,j}(1)}(\xi,x)
			 	\underset{k_s}{*} \partial_{x_j}w_{i,\ell_{i,j}(2)}(\xi,x)\underset{k}{*}  \cdots 
		 		   \underset{k_s}{*} \partial_{x_j}w_{i,\ell_{i,j}(A_{i,j})}(\xi,x)\Big)\Big) \\  
			\end{aligned},
		\end{equation}
	 		where  
		\begin{equation*}
			 	\begin{aligned}\ 
		 		  &  \sum_{ ({\alpha},\ell')}=\{(\ell_{i}(1),\ell_{i}(2),\cdots,\ell_{i}(\alpha_{i}))
			 		  \; 1\leq i \leq m; 
			 			  \sum_{\ 1\leq i \leq m }
			 		\sum_{s=1}^{\alpha_i}\ell_{i}(s)=\ell'\}, \\	  
			 	 		  &  \sum_{ (A,\ell'')}=\{(\ell_{i,j}(1),\ell_{i,j}(2),\cdots,\ell_{i,j}(A_{i,j}))\; (i,j)\in
			 	 		  \Delta ;  \sum_{ (i,j)\in \Delta 	}
			 		\sum_{s=1}^{A_{i,j}}\ell_{i,j}(s)=\ell''\}.
			 	\end{aligned}
		 \end{equation*}
			Then the right hand side of $(C_{s,\varepsilon})$ \eqref{C_e'*} is a power series of $\varepsilon$    
   	\begin{equation}
   		\begin{aligned}
   		 & \sum_{\ell=1}^{\infty}\varepsilon^{\ell}
			 \Big(\sum_{\begin{subarray}\ (\alpha,A); \\ |\alpha|+|A|=\ell \end{subarray}}
			 \widehat{g}_{r,\alpha,A}(\xi,x)\underset{k_s}{*}  
			  {W}^{* \alpha}\underset{k_s}{*}  (\nabla_x {W})^{{*} A}\Big)
			  +	\varepsilon\widehat{g}_{r,0,0}^s(\xi,x) \\
			  &=\sum_{\ell=1}^{\infty}{\mathcal G}_{r,\ell}(s)(\xi,x) \varepsilon^{\ell} 
			%%  {\mathcal G}_{r,1}(s)(\xi,x)=\widehat{g}_{r,0,0}^s(\xi,x) %%=\widehat{g}_r(\xi,x,0,0)
   		\end{aligned} \label{Conv-e'}
   	\end{equation}
			with   	
		 \begin{equation}	\left \{
			\begin{aligned}\
				&	\mathcal{ G}_{r,1}(s)(\xi,x)=g_{r,0,0}^{s}(\xi,x)\\
				&	\mathcal{ G}_{r,\ell}(s)(\xi,x) \mbox{ is determined by }
				{\{w_{j,q}(\xi,x)\}_{j=1}^m }  \;( 1 \leq q \leq \ell-1). 
			\end{aligned}\right.  \label{G-s}
		\end{equation}
		  We get a system of equations for $\{{w}_{r,\ell}(\xi,x): 1\leq r \leq m, \ell=1,2, \cdots\}$ 
 		  from \eqref{C_e'*} $(C_{s,\varepsilon})$
		\begin{equation}
					\left \{
		 	\begin{aligned}\
			 	& (\xi \partial_{\xi}+k_s+1){w}_{r,1}(\xi,x)=g_{r,0,0}^{s}(\xi,x)\\
			 	& (\xi \partial_{\xi}+k_s+1){w}_{r,\ell}(\xi,x)=\mathcal{G}_{r,\ell}(s)(\xi,x)\quad \ell \geq 2.
	 		\end{aligned}\right . \label{ws}
		\end{equation}
	   	 Thus we have prepared a system of equations $(C_s), (C_{s,\varepsilon})$ and \eqref{ws}
	   	 we need.\par 
		   	Let us constract $V(t,x)=(v_1(t,x),, \dots,v_m(t,x))$ satisfying \eqref{CP-M} such that  
	   	 $V(t,x)=(\mathscr{M}_{{\bf k},{\bf \theta}}\widehat{V})(\xi,x)$,  
	   	 $v_i(t,x)=(\mathscr{M}_{{\bf k},{\bf \theta}}\widehat{v}_i)(t,x)$. 
	   	 Firstly we study $\widehat{V}(\xi,x)$.  We have $\widehat{V}(\xi,x)=(\widetilde{{\mathscr B}}_{k_1}\widetilde{V})(\xi,x)$ 
		formally.
 	       In order to study its analytic property we solve \eqref{ws} for $s=1$, that is,
	   	 $(C_{1,\varepsilon})$,
	\begin{equation}\left \{
		 	\begin{aligned}\
		 	& (\xi \partial_{\xi}+k_1+1){w}_{r,1}(\xi,x)=\widehat{g}_r(\xi,x)\;
		 	(=g_{r,0,0}^{1}(\xi,x)) \\ 
		 	& (\xi \partial_{\xi}+k_1+1){w}_{r,\ell}(\xi,x)=G_{r,\ell}(1)(\xi,x)\quad \ell \geq 2,  \quad
		 	  1\leq r \leq m.
	 	\end{aligned}\right . \label{5.9}
		 \end{equation}
				We can determine 	$\{{w}_{r,\ell}(\xi,x)\}_{r=1}^m (\ell\geq 1)$ and show
				${w}_{r}(\xi,x,\varepsilon)=\sum_{\ell=1}^{\infty}{w}_{r,\ell}(\xi,x)
				\varepsilon^{\ell}$\ ($1\leq r \leq m$) satisfy $(C_{1,\varepsilon})$ 
				as the case of Borel summability (Proposition \ref{prop-main}). Hence we get  
		\begin{prop} {\rm (1)} There exist 
				$\{{w}_{r,\ell}(\xi,x)\}_{r=1}^m (\ell\geq 1)$ satisfying \eqref{5.9} such that 
				$\{\xi^{k_1-1}{w}_{r,\ell}(\xi,x)\}_{r=1}^m$ are  holomorphic in $(\{|\xi|<r\}\cup S^*_1)\times 
				\Omega_0$ for $r>0$. Further there exist constants 
				$\{M_{r,\ell}(1)\}_{r=1}^m$ $(\ell \geq 1)$ and $c$ such that
			\begin{equation}
				\begin{aligned}
					{w}_{r,\ell}(\xi,x)\ll M_{r,\ell}(1)
					 \frac{|\xi|^{\ell-k_1}e^{c|\xi|^{\kappa_1}}}{\Gamma(\ell/k_1)}
					\frac{\Theta^{(\ell)}(X)}{\ell !}
				\end{aligned}
			\end{equation}
				and $\sum_{\ell=1}^{\infty}M_{r,\ell}(1)\tau^{\ell}$ converges in a neighborhood of $\tau=0$. \\
			{\rm (2)}  	${w}_{r}(\xi,x,\varepsilon)=\sum_{\ell=1}^{\infty}w_{r,\ell}(\xi,x)\varepsilon^{\ell}$
				 converge in a neighborhood $\omega$ of $x=0$, and $W(\xi,x,\varepsilon)=({w}_{1}(\xi,x,\varepsilon),\dots, 
		    {w}_{m}(\xi,x,\varepsilon))$ satisfies $(C_{1,\varepsilon})$. Let $\varepsilon=1$ and 
				${w}_{r}(\xi,x)=\sum_{\ell=1}^{\infty}w_{r,\ell}(\xi,x)$. 	Then
			\begin{equation}
				\begin{aligned}
				|{w}_{r}(\xi,x)|\leq M
				{|\xi|^{1-k_1}e^{c'|\xi|^{\kappa_1}}}.
				\end{aligned}\label{eqwr}
			\end{equation}
			and 	$W(\xi,x)=({w}_{1}(\xi,x),\dots,  {w}_{m}(\xi,x))$ satisfies $(C_{1})$. 
				 \label{prop-1}
		\end{prop}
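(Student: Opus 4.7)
The plan is to reproduce, at the first level $s=1$, exactly the argument of Proposition \ref{prop-main} (the Borel summable case), noting that system \eqref{5.9} has the same algebraic shape as \eqref{ws} with $k$ replaced by $k_1$ and the exponential factor $e^{c|\xi|^{k}}$ replaced by $e^{c|\xi|^{\kappa_1}}$. The crucial structural point is that $\kappa_1>k_1$ (since $1/\kappa_1 = 1/k_1 - 1/k_2$ with $k_1<k_2$), so the convolution estimate Lemma \ref{conv} applies with $k=k_1$ and $\kappa=\kappa_1$; the exponent $\kappa_1$ of the exponential factor is preserved under $k_1$-convolution.

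The steps, in order, are as follows. First, by Proposition \ref{prop5.3} at $s=1$, the majorants of $g^1_{r,\alpha,A}(\xi,x)=\widehat{g}_{r,\alpha,A}(\xi,x)$ have exactly the form used in \eqref{g^}, now carrying $e^{c|\xi|^{\kappa_1}}$. Applying Lemma \ref{lem2.6} to the first equation of \eqref{5.9} yields constants $M_{r,1}(1)$ with
\begin{equation*}
w_{r,1}(\xi,x)\ll M_{r,1}(1)\,\frac{|\xi|^{1-k_1}e^{c|\xi|^{\kappa_1}}}{\Gamma(1/k_1)}\,\Theta^{(1)}(X).
\end{equation*}
Second, assuming the desired majorant is known for $\ell\le L-1$, I bound ${\mathcal G}_{r,L}(1)(\xi,x)$ (described in \eqref{G-s}, \eqref{Conv-e'}) term by term: Lemma \ref{conv} with $\kappa=\kappa_1$ handles each iterated $k_1$-convolution and compounds the exponential factor into a single $e^{c|\xi|^{\kappa_1}}$, while Corollary \ref{Cor-1} collapses the products of derivatives of $\Theta(X)$. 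Inverting $\xi\partial_\xi+k_1+1$ by Lemma \ref{lem2.6} then yields the majorant for $w_{r,L}(\xi,x)$, and reading off the numerical coefficient defines $M_{r,L}(1)$ by the exact analogue of formula \eqref{Mr'}.

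Third, convergence of $\sum_{\ell\ge 1}M_{r,\ell}(1)\tau^\ell$ follows from the implicit function theorem argument of Section 3.3.1 verbatim: the constants $M_{r,\ell}(1)$ are precisely the Taylor coefficients at $z=0$ of the unique holomorphic solution $Y(z)=(y_1(z),\ldots,y_m(z))$ of the functional system $y_r=G_r(Y,z)$, where $G_r$ is built from $G_{r,\alpha,A}(1)$ coming from Proposition \ref{prop5.3}; since $G_r(Y,0)=0$ and each $G_r$ is holomorphic near the origin, analytic solutions exist. Holomorphy of $\xi^{k_1-1}w_{r,\ell}(\xi,x)$ on $(\{|\xi|<r\}\cup S_1^*)\times \Omega_0$ follows inductively from Lemma \ref{lem2.3}-(1) (with $s=1$) and Proposition \ref{prop2.5} applied iteratively.

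For part (2), taking $\varepsilon=1$, uniform convergence of $\sum w_{r,\ell}(\xi,x)$ on compact subsets of a polydisc $\omega$ follows from the majorants, and summing the geometric-in-$\ell$ bounds $M_{r,\ell}(1)C^{\ell}\,|\xi|^{\ell-k_1}e^{c|\xi|^{\kappa_1}}/\Gamma(\ell/k_1)$ produces the estimate \eqref{eqwr} with a slightly larger exponential constant $c'>c$, exactly as in the final paragraph of Section 3.3.1. The sum $W(\xi,x)$ satisfies $(C_1)$ by term-by-term comparison. The only point requiring care, and the main (though routine) technical check, is the compatibility of $k_1$-convolution with the factor $e^{c|\xi|^{\kappa_1}}$: this is precisely what Lemma \ref{conv} was formulated to cover in its general $k\le\kappa$ form, so no new estimate is needed beyond those already established in Sections 1--3.
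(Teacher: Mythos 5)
Your proposal is correct and follows essentially the same route as the paper, which itself disposes of this proposition by remarking that one repeats the proof of Proposition \ref{prop-main} with $e^{c|\xi|^{k}}$ replaced by $e^{c|\xi|^{\kappa_1}}$; you correctly identify the one point that makes this replacement legitimate, namely that Lemma \ref{conv} is stated for $0<k\leq\kappa$ so $k_1$-convolution preserves the factor $e^{c|\xi|^{\kappa_1}}$. The remaining steps (majorants from Proposition \ref{prop5.3} at $s=1$, inversion of $\xi\partial_\xi+k_1+1$ via Lemma \ref{lem2.6}, Corollary \ref{Cor-1} for the $\Theta$-products, and the implicit-function-theorem argument for $\sum_\ell M_{r,\ell}(1)\tau^\ell$) match the paper's scheme exactly.
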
 
				We can show Proposition \ref{prop-1} by repeating the same method as the case Borel summability 
		  ({\it the proof of Proposition \ref{prop-main}}). The difference is that 
		  $e^{c|\xi|^{k}}$ in the estimate is replaced by $e^{c|\xi|^{\kappa_1}}$. 
		    Proposition \ref{prop-1} means $W(\xi,x)$
		    has holomorphic in infinite sector $S_1^*$ and $\xi^{k_1-1}W(\xi,x)$
		    is holomorphic at $\xi=0$. Then 
		   \begin{equation}
		   	\begin{aligned}
		   	  w_r(\xi,x)=\sum_{n=1}^{\infty}\frac{{\xi}^{n-k_1}}{\Gamma(\frac{n}{k_1})}w^*_{r,n}(x) \quad
		   	  0<|\xi|<r.
		   	\end{aligned}
		   \end{equation} 
		  Let 
		   $\widetilde{V}(t,x)=({\widetilde v}_1(t,x),\dots,{\widetilde v}_m(t,x))$,  
		   ${\widetilde v}_r(t,x)=\sum_{n=1}^{\infty}v_{r,n}(x)t^n$, be a formal solution and 
		   		   $({\widetilde{\mathscr B}}_{k_1}{\widetilde v}_r)(\xi,x)
		   =\sum_{n=1}^{\infty}\frac{{\xi}^{n-k_1}}{\Gamma(\frac{n}{k_1})}v_{r,n}(x)$. Then  
		   $w^*_{r,n}(x)=v_{r,n}(x)$. 
		 \begin{cor}
		  $(\widetilde{B}_{k_1}\widetilde{V})(\xi,x)=W(\xi,x)$.
		 \end{cor}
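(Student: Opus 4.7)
The strategy is to compare two formal series in $\xi$: the formal $k_1$-Borel transform $(\widetilde{\mathscr B}_{k_1}\widetilde V)(\xi,x)$ and the expansion of the analytic function $W(\xi,x)$ near $\xi=0$ furnished by \eqref{eqwr} together with the fact that $\xi^{k_1-1}W(\xi,x)$ is holomorphic at $\xi=0$. Both have the form $\sum_{n\ge1}\Gamma(n/k_1)^{-1}a_{r,n}(x)\,\xi^{n-k_1}$, so it suffices to identify the coefficients $a_{r,n}(x)$.

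First I would verify, at the purely formal level, that $(C_1)$ is the term-by-term image of the reduced Cauchy problem \eqref{CP-M} under $\widetilde{\mathscr B}_{k_1}$. The two algebraic facts needed are (i) $\widetilde{\mathscr B}_{k_1}\partial_t(t\,v_r)=(\xi\partial_\xi+k_1+1)\widetilde{\mathscr B}_{k_1} v_r$, which is an immediate computation on monomials $t^n\mapsto \Gamma(n/k_1)^{-1}\xi^{n-k_1}$ since $(\xi\partial_\xi+k_1+1)\xi^{n-k_1}=(n+1)\xi^{n-k_1}$, and (ii) the formal product/convolution dictionary $\widetilde{\mathscr B}_{k_1}(\varphi\psi)=(\widetilde{\mathscr B}_{k_1}\varphi)\underset{k_1}{*}(\widetilde{\mathscr B}_{k_1}\psi)$ coming from the Beta-function identity underlying Lemma \ref{conv}. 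Applied coefficient-by-coefficient to the expansion $g_i(t,x,V,\nabla_x V)=\sum g_{i,\alpha,A}(t,x)V^\alpha(\nabla_x V)^A$, this shows that the formal solution $\widetilde V$ of \eqref{CP-M} yields a formal solution $\widetilde{\mathscr B}_{k_1}\widetilde V$ of $(C_1)$ with $g_{r,\alpha,A}^1=\widehat g_{r,\alpha,A}$.

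Second, by Proposition \ref{prop-1} the analytic function $W(\xi,x)$ satisfies $(C_1)$ in $(\{|\xi|<r\}\cup S_1^*)\times\Omega_0$; its germ at $\xi=0$ therefore also satisfies $(C_1)$ as a formal series in $\xi$. Both formal series thus solve $(C_1)$ and have the Ansatz form above, so I would conclude by a formal-uniqueness argument: plugging $\sum\Gamma(n/k_1)^{-1}a_{r,n}(x)\xi^{n-k_1}$ into $(C_1)$ and matching coefficients of $\xi^{n-k_1}$ produces a recursion of the form $(n+1)a_{r,n}(x)=\Phi_n\bigl(\{a_{j,q}(x)\}_{q<n},\{\partial_x a_{j,q}(x)\}_{q<n}\bigr)$, which determines $a_{r,n}(x)$ uniquely once the inhomogeneous term (coming from $g^1_{r,0,0}$, i.e.\ from $\partial_t u_{r,1}(0,x)$ after our reductions) is fixed. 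Equivalently, one can invert the formal $k_1$-Borel transform to note that any formal solution of $(C_1)$ of the prescribed form pulls back to a formal power series solution of \eqref{CP-M}, and such a solution is unique.

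The main obstacle is the second part of step one, namely checking the formal product/convolution dictionary at the level of formal series (not just convergent ones) consistently with all the nested convolutions and $\nabla_x$ entering the definition of $\mathcal G_{r,\ell}$. Once this bookkeeping is done, the match with the recursion determining $v_{r,n}(x)$ from \eqref{CP-M} is automatic, and $a_{r,n}(x)=v_{r,n}(x)=w^*_{r,n}(x)$ follows, giving $(\widetilde{\mathscr B}_{k_1}\widetilde V)(\xi,x)=W(\xi,x)$.
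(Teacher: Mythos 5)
Your proposal is correct and follows essentially the same route as the paper: the paper notes that $(\widetilde{\mathscr B}_{k_1}\widetilde{V})(\xi,x)$ satisfies $(C_1)$ formally, that $\xi^{k_1-1}W(\xi,x)$ is holomorphic at $\xi=0$ with an expansion of the same form, and concludes $w^*_{r,n}=v_{r,n}$ by the (implicit) uniqueness of the formal solution of $(C_1)$. You merely make explicit the two ingredients the paper leaves unstated, namely the formal Borel/convolution dictionary and the well-founded coefficient recursion $(n+1)a_{r,n}=\Phi_n(\{a_{j,q}\}_{q<n})$.
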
 
			 Let $V^{1}(\xi,x,\varepsilon):=W(\xi,x,\epsilon)$. %%27 9/21
		       We proceed to next step. It is inductive with respect to $s$. Let $2\leq s \leq p$. 
			 Suppose that $V^{s-1}(\xi,x,\varepsilon)=({v}_1^{s-1}(\xi,x,\varepsilon),\dots,
			 {v}_m^{s-1}(\xi,x,\varepsilon))$ satisfying $(C_{s-1,\epsilon})$ \eqref{C_e'*} is constructed 
			 such that
		 \begin{equation}\left \{
				 	\begin{aligned} &
			 		{v}_r^{s-1}(\xi,x,\varepsilon)=\sum_{\ell=1}^{\infty}{v}_{r,\ell}^{s-1}(\xi,x)\varepsilon^{\ell}\\
				 	& {v}_{r,\ell}^{s-1}(\xi,x)\in {\rm Exp}(\kappa_{s-1},S_{s-1}^*\times\Omega_0)
			 	 \;\; 1\leq r \leq m,\; \ell\geq 1 
			 	\end{aligned}\right. \label{Vs-1}
		 \end{equation}
		   with estimates 
			\begin{equation}
				\begin{aligned}
				{v}_{r,\ell}^{s-1}(\xi,x)\ll M_{r,\ell}(s-1) \frac{|\xi|^{\ell-k_{s-1}}
				e^{c|\xi|^{\kappa_{s-1}}}}{\Gamma(\ell/k_{s-1})}
					\frac{\Theta^{(\ell)}(X)}{\ell !},
				\end{aligned} \label{Vs-2}
			\end{equation}
			where $\sum_{\ell=1}^{\infty}M_{r,\ell}(s-1)\tau^{\ell}$ converges in a neighborhood of $\tau=0$.
			\par
			 Let 
      \begin{equation}\left \{
 	     	\begin{aligned}
	      	V^{s}(\xi,x,\varepsilon):=&(\mathscr{A}_{k_{s}.k_{s-1},\theta_{s-1}}V^{s-1})(\xi,x,\varepsilon). \\
 	     	{v}_{r,\ell}^{s}(\xi,x):
 	     	=& \mathscr{A}_{k_{s}.k_{s-1},\theta_{s-1}}{v}_{r,\ell}^{s-1}(\xi,x),.
 	     	\end{aligned}\right . \label{Vs}
      \end{equation} 
 	     It follows from $|\theta_s-\theta_{s-1}|\leq \pi/2\kappa_{s-1}$ and  \eqref{Vs-2} that 
 	     ${v}_{r,\ell}^{s}(\xi,x)$ is holomorphic in ${S_s^*}_0$ with respect to $\xi$.
	        Our aim is to show $V^{s}(\xi,x,\varepsilon)$ has holomorphic prolongation to 
	        an infinite sector $S_s^*$ with desired estimate. 
		      	We have from ${g}_{r,\alpha,A}^{s}(\xi,x) 
			=(\mathscr{A}_{k_{s},k_{s-1},\theta_{s-1}}	{g}_{r,\alpha,A}^{s-1})(\xi,x) $,  
		\begin{equation}
			\begin{aligned}\ & \mathscr{A}_{k_{s},k_{s-1},\theta_{s-1}}\big(
					{g}_{r,\alpha,A}^{s-1}(\xi,x)\underset{k_{s-1}}{*}(V^{s-1})^{{*} \alpha}
					\underset{k_{s-1}}{*}  (\nabla _x  V^{s-1})^{* A} \big)\\
				&	={g}_{r,\alpha,A}^{s}(\xi,x)\underset{k_{s}}{*}
					(\mathscr{A}_{k_{s},k_{s-1},\theta_{s-1}}V^{s-1})^{{*} \alpha}
					\underset{k_{s}}{*}  
					(\mathscr{A}_{k_{s},k_{s-1},\theta_{s-1}}\nabla _xV^{s-1})^{* A}\\
				&	={g}_{r,\alpha,A}^{s}(\xi,x)\underset{k_{s}}{*}
					(V^{s})^{{*} \alpha}
					\underset{k_{s}}{*}(\nabla _xV^{s})^{* A}.
		\end{aligned} \label{rel}
	\end{equation}  
			and 	$\mathcal{G}_{r,\ell}(s)(\xi,x)=
	    \big(\mathscr{A}_{k_{s},k_{s-1},\theta_{s-1}}\mathcal{G}_{r,\ell}(s-1)\big)(\xi,x)$.
	    This means $V^{s}(\xi,x,\varepsilon)=(\mathscr{A}_{k_{s}.k_{s-1},\theta_{s-1}}
			V^{s-1})(\xi,x,\varepsilon)$ satisfies $(C_{s,\varepsilon})$ in ${S^*_s}_0$.  
			 Let ${v}_r^s(\xi,x,\varepsilon)=\sum_{\ell=1}^{\infty}{v}_{r,\ell}^s(\xi,x)\varepsilon^{\ell}$. 
		 Then we have for $\xi \in {S^*_s}_0$  
		  \begin{equation}\left \{
		 	\begin{aligned}\
			 	& (\xi \partial_{\xi}+k_s+1){v}_{r,1}^s(\xi,x)=g_{r,0,0}^{s}(\xi,x)\\
			 	& (\xi \partial_{\xi}+k_s+1){v}_{r,\ell}^s(\xi,x)=\mathcal{G}_{r,\ell}(s)(\xi,x)\quad \ell 
			 	\geq 2.
		 	\end{aligned}\right .\label{vs}
	 	\end{equation}
  		 	We have to show that $\{v_{r,\ell}^s(\xi,x)\}$ are holomorphically extensible to $S^*_s$ and 
			${v}_{r,\ell}^s(\xi,x) \in {\rm Exp}(\kappa_{s}, S(s)\times \Omega_0)$. For this purpose we 
			study \eqref{vs} that $\{{v}_{r,\ell}^s(\xi,x)\}_{r=1}^m$ satisfy. 
			Since $\{g_r(t,x,U,P)\}_{i=1}^m$ are ${\bf k}$-multisummable, \eqref{gs-est} holds.
			We have 
		\begin{prop} {\rm (1)}  
				${v}_{r,\ell}^s(\xi,x)$ has  holomorphic prolongation to infinite sector $S^*_s$ and 
				${v}_{r,\ell}^s(\xi,x)	\in {\rm Exp}(\kappa_s,S^*_{s}\times\Omega_0)$. There are 
				constants $\{M_{r,\ell}(s)\}_{r=1}^m$\; $(\ell\geq 1)$ and $c$ such that 
			\begin{equation}
				\begin{aligned}
					{v}_{r,\ell}^s(\xi,x)\ll M_{r,\ell}(s)
					 \frac{|\xi|^{\ell-k_{s}}e^{c|\xi|^{\kappa_{s}}}}{\Gamma(\ell/k_{s})}
					\frac{\Theta^{(\ell)}(X)}{\ell !}
				\end{aligned}
			\end{equation}
				and $\sum_{\ell=1}^{\infty}M_{r,\ell}(s)\tau^{\ell}$ converges in a neighborhood of $\tau=0$. \\
			{\rm (2)}.
			  Let $\varepsilon=1$ and ${v}_r^s(\xi,x)=\sum_{\ell=1}^{\infty}{v}_{r,\ell}^s(\xi,x)$.
				Then $\{{v}_r^s(\xi,x);1\leq r\leq m\}$ converge in a neighborhood of $x=0$
				and 
				\begin{equation}
					\begin{aligned}
					|{v}_r^s(\xi,x|\leq M|\xi|^{1-k_s}e^{c'|\xi|^{\kappa_s}}
					\end{aligned}
				\end{equation}
					${V}^s(\xi,x)=({v}_{1}^s(\xi,x),\dots,{v}_{m}^s(\xi,x))$ satisfies {\rm(}$C_{s}${\rm)}. 
				 \label{prop-s}
		\end{prop}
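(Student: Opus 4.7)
The approach is to repeat the argument of Proposition \ref{prop-main} at level $s$, using the estimates on $g^s_{r,\alpha,A}(\xi,x)$ from Proposition \ref{prop5.3} in place of those from Lemma \ref{lem2.3}, with the Gevrey-type exponential $e^{c|\xi|^k}$ replaced throughout by $e^{c|\xi|^{\kappa_s}}$ and the Gamma denominators $\Gamma(\ell/k)$ by $\Gamma(\ell/k_s)$. The governing equations \eqref{vs} for $\{v_{r,\ell}^s(\xi,x)\}$ have exactly the form treated in Section 3, so the same scheme applies, with Lemma \ref{conv} still usable because $k_s\le \kappa_s$.

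First I would construct $\{v_{r,\ell}^s(\xi,x)\}$ directly on $S^*_s$, inductively in $\ell$, via the integral formula of Lemma \ref{lem2.6}. To estimate them I would reproduce verbatim the majorant computation of $I * II$ in Section 3.3: combining Lemma \ref{conv} with the majorant inequalities of Corollary \ref{Cor-1} and the bound \eqref{gs-est}, one obtains
\begin{equation*}
   \mathcal{G}_{r,\ell}(s)(\xi,x) \ll M_{r,\ell}(s)\, \frac{|\xi|^{\ell-k_s}e^{c|\xi|^{\kappa_s}}}{\Gamma(\ell/k_s)}\, \frac{\Theta^{(\ell)}(X)}{(\ell-1)!},
\end{equation*}
where the constants $M_{r,\ell}(s)$ are defined by the same combinatorial recursion as \eqref{Mr'}, with $G'_{\alpha,A}$ replaced by $G_{r,\alpha,A}(s)$. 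A second application of Lemma \ref{lem2.6} then yields the claimed bound on $v_{r,\ell}^s$. The convergence of $\sum_\ell M_{r,\ell}(s)\tau^\ell$ follows exactly as in Section 3.3 via the implicit-function argument applied to $y_r = G_r(Y,z)$ with coefficients $G_{r,\alpha,A}(s)$: Proposition \ref{prop5.3} guarantees that $\sum G_{r,\alpha,A}(s)V^\alpha P^A$ converges near the origin, so a unique holomorphic $Y(z)$ with $Y(0)=0$ exists, and its Taylor coefficients coincide with $M_{r,\ell}(s)$. Part (2) is then immediate by summing the termwise bound, exactly as at the end of Section 3.3.

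The main obstacle, and the one genuinely new point, is to verify that the $v_{r,\ell}^s$ built above on $S_s^*$ agree on ${S^*_s}_0$ with $\mathscr{A}_{k_s,k_{s-1},\theta_{s-1}} v_{r,\ell}^{s-1}$, so that the direct construction really produces the holomorphic prolongation of the accelerated function to the full infinite sector $S^*_s$. I would handle this inductively in $\ell$ using Lemma \ref{lem4.1}: identity \eqref{eq4.3} converts the equation satisfied by $v_{r,\ell}^{s-1}$ into $(\xi\partial_\xi + k_s+1)\mathscr{A}_{k_s,k_{s-1},\theta_{s-1}} v_{r,\ell}^{s-1} = \mathscr{A}_{k_s,k_{s-1},\theta_{s-1}}\mathcal{G}_{r,\ell}(s-1)$, while identity \eqref{eq4.2} together with \eqref{rel} shows, after substituting the inductively accelerated lower-order functions, that $\mathscr{A}_{k_s,k_{s-1},\theta_{s-1}}\mathcal{G}_{r,\ell}(s-1) = \mathcal{G}_{r,\ell}(s)$. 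Uniqueness of the solution of \eqref{vs} under the requirement that $\xi^{k_s-1}v_{r,\ell}^s$ be holomorphic at $\xi=0$ then forces the identification, completing the induction and giving both the holomorphic prolongation to $S^*_s$ and the fact that ${V}^s(\xi,x)$ satisfies $(C_s)$.
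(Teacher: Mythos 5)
Your proposal is correct and follows essentially the same route as the paper, which itself only remarks that the proof of Proposition \ref{prop-s} is ``almost the same as the Borel summable case'': one reruns the majorant scheme of Section 3 for the equations \eqref{vs} on the infinite sector $S^*_s$ using the bounds \eqref{gs-est} of Proposition \ref{prop5.3}, with $e^{c|\xi|^{k}}$ replaced by $e^{c|\xi|^{\kappa_s}}$ and $\Gamma(\ell/k)$ by $\Gamma(\ell/k_s)$, followed by the same implicit-function argument for the convergence of $\sum_{\ell}M_{r,\ell}(s)\tau^{\ell}$. Your explicit identification, via \eqref{eq4.2}, \eqref{eq4.3}, \eqref{rel} and uniqueness of solutions of \eqref{vs} regular at $\xi=0$, of the directly constructed solution on $S^*_s$ with $\mathscr{A}_{k_s,k_{s-1},\theta_{s-1}}v^{s-1}_{r,\ell}$ on ${S^*_s}_0$ is precisely the step the paper prepares with \eqref{rel} and otherwise leaves implicit.
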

			  Proof of Proposition \ref{prop-s} is almost same as Borel summable case. 
			  We can show it in  repeatable way.
			  Consequently we get ${V}^s(\xi,x)$ from ${V}^{s-1}(\xi,x)$	and 
			  $\{V^s(\xi,x)\}_{s=1}^{p}$ successively, %% in ${S^*_s}_0$
			\begin{equation}\left \{
				\begin{aligned}
		   	 &	 V^1(\xi,x)=W(\xi,x) \\				   
		   	 &	 {V}^s(\xi,x)= (\mathscr{A}_{k_s,k_{s-1},\theta_{s-1}}
		   	 		{V}^{s-1})(\xi,x) \;\; 2\leq s \leq p .
				\end{aligned}\right .
			\end{equation}
				$W(\xi,x) $ is a solution of $(C_{1})$ determined in Proposition \ref{prop-1}.  Let  
			\begin{equation}
				\begin{aligned}
				&   V(t,x)= (\mathscr{L}_{k_p,\theta_{p}}{V}^{p})(t,x)
				\end{aligned}
			\end{equation}
				Then ${\bf k}$-sum  $V(t,x)=({v}_{1}(\xi,x),\dots ,{v}_{m}(\xi,x) )$ is a 
				{\bf k}-multisummable solution of (CP-M) \eqref{CP-M} and we have Theorem  \ref{th-2}.
 	\section{\normalsize Appendix} 
 	         It remains to prove Lemmas \ref{lem2.2}, \ref{lem2.3} and \ref{lem5.1} and 
 	         Proposition \ref{prop5.3}. The set of 
 	         Borel summable functions is a subclass of multisummable functions. Hence 
 	         we show Lemma \ref{lem5.1} and Proposition \ref{prop5.3}. 	         
	 	       Let $ \Omega_0=\{x\in \mathbb{C}^n; |x|<R_0\}$,  $\Omega'=\{Z\in\mathbb{C}^N; |Z|<R\}$ and
	 	       $\Omega=\Omega_0\times \Omega'$. 
	 	       Let $\widetilde{f}(t,x,Z) \in \mathscr{O}(\Omega)[[t]]$ be ${\bf k}=(k_1,k_2,\cdots,k_p)$-
	 	       multisummable in a multidirection
	 	       ${\bf \theta}=(\theta_1,\theta_2,\cdots,\theta_p)$ and 
	 	       $S_i^*=\{ \xi; |\arg \xi-\theta_i|<\varepsilon_i\}$.
	 	       Assume $\widetilde{f}(0,x,Z)=0$. 
	 	       Then $f^1(\xi,x,Z)=(\widetilde{\mathscr{B}}_{k_1}\widetilde{f})(\xi,x,Z)$
	 	       and $f^s(\xi,x,Y)=(\mathscr{A}_{k_s,k_{s-1}\theta_{s-1}}f^{s-1})(\xi,x,Y)$. It holds that
	 	       \begin{equation}
	 	       	\begin{aligned}
	 	       	|f^{s}(\xi,x,Z)|\leq K |\xi|^{1-k_s}e^{c|\xi|^{\kappa_s}}  \mbox{\it in $S_s^*\times \Omega$}. 
	 	       	\end{aligned}
	 	       \end{equation}
           By expanding $f^{s}(\xi,x,Z)$ at $Z=0$, we have
			     \begin{equation}\left \{
			  		\begin{aligned}
			  		    &  f^{s}(\xi,x,Z)=\sum_{C \in\mathbb{N}^N }f^{s}_{C}(\xi,x)Z^{C}, \quad C=(C_1,\cdots,C_N)
						     \in \mathbb{N}^N \\
			  		    & |f^{s}_{C}(\xi,x)|\leq \frac{ K}{R^{|C|}} |\xi|^{1-k_s}e^{c|\xi|^{\kappa_s}}.
			  		\end{aligned}\right .\label{6.2}
			  	\end{equation}
	 		\begin{lem} \label{lem6-1}
		 		  Let $\widetilde{f}(t,x,Z) \in \mathscr{O}(\Omega)[[t]]$ be 
		 		${\bf k}=(k_1,k_2,\cdots,k_p)$-multisummable in a multidirection ${\mathbf \theta}$ with
				$\widetilde{f}(0,x,Z)=0$.   Let 
				 		  \begin{equation*}
				 		  	\begin{aligned}
				 		  	& {h}_{C}(\xi,x)={f}^s_{C}(\xi,x)\underset{k_s}{*}\frac{\xi^{|C|-k_s}}{\Gamma(|C|/k_s)}
				 		  	\;\;\;
				 		  		 C\not=0,\quad {h}_0(\xi,x)={f}^s_{0}(\xi,x) \\
				 			 & {h}(\xi,x,Z)=\sum_{C\in \mathbb{N}^N}{h}_{C}(\xi,x)Z^C.
				 		  	\end{aligned}
				 		  \end{equation*}	
				 		 Then there exist $M$ and $c'$ such that for $|Z|<R'<R$
					\begin{equation}
							\begin{aligned}  
								& |{h}(\xi,x,Z)|\leq  \frac{M|\xi|^{1-k_s}e^{c'|\xi|^{\kappa_s}}}{\Gamma(1/k_s)}
								\quad (\xi,x)\in S^*_s\times \Omega_0.				
							\end{aligned}\label{6.3}
					\end{equation} 
				   \end{lem}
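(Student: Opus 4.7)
The plan is to estimate each coefficient $h_C$ in the $Z$-expansion by means of the convolution bound (Lemma \ref{conv}), then sum the resulting Mittag-Leffler-type series and identify the final exponential type in $|\xi|$.

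First I would apply Lemma \ref{conv} with $\phi_1=f^s_C$ and $\phi_2(\xi)=\xi^{|C|-k_s}/\Gamma(|C|/k_s)$. From \eqref{6.2}, $\phi_1$ fits the hypothesis with $k=k_s$, $s_1=1$, $C_1=K\Gamma(1/k_s)/R^{|C|}$; the growth exponent $\kappa_s$ satisfies $\kappa_s\geq k_s$ (with equality only when $s=p$), so the hypothesis $k\leq \kappa$ of Lemma \ref{conv} holds. The factor $\phi_2$ trivially matches with $s_2=|C|$, $C_2=1$, $c=0$. The convolution inequality then yields, uniformly for $\xi\in S_s^*$ and $x\in\Omega_0$,
\begin{equation*}
|h_C(\xi,x)|\leq \frac{K\Gamma(1/k_s)}{R^{|C|}}\cdot\frac{|\xi|^{|C|+1-k_s}e^{c|\xi|^{\kappa_s}}}{\Gamma((|C|+1)/k_s)},
\end{equation*}
an estimate which is also consistent with $C=0$ since $h_0=f^s_0$ and $\Gamma(1/k_s)/\Gamma(1/k_s)=1$.

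Next I would sum over $C$ using $|Z^C|\leq |Z|^{|C|}$ (because $|Z|=\max_i|z_i|$) and group by $n=|C|$; since $\#\{C\in\mathbb{N}^N:|C|=n\}=\binom{n+N-1}{N-1}$ is polynomial in $n$, for any $\epsilon>0$ it is dominated by $C_\epsilon(1+\epsilon)^n$. Fixing $R'<R$ and choosing $\epsilon>0$ small enough that $\tau_0:=(1+\epsilon)R'/R<1$, the problem reduces to controlling
\begin{equation*}
\sum_{n\geq 0}\frac{(|\xi|\tau_0)^n}{\Gamma((n+1)/k_s)},
\end{equation*}
which is an entire function of exponential type $k_s$ in the argument $|\xi|\tau_0$; a direct Stirling estimate on $\Gamma((n+1)/k_s)$ (equivalently, the classical two-parameter Mittag-Leffler asymptotic) furnishes the bound $Ce^{(1+\epsilon')(|\xi|\tau_0)^{k_s}}$, so the sum is dominated by $C'e^{\tilde c|\xi|^{k_s}}$ with $\tilde c$ depending only on $R'/R$.

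It remains to reconcile the two exponents $c|\xi|^{\kappa_s}$ and $\tilde c|\xi|^{k_s}$. For $s=p$ they coincide ($\kappa_p=k_p$) and simply add. For $s<p$, the identity $1/\kappa_s=1/k_s-1/k_{s+1}<1/k_s$ gives $\kappa_s>k_s$, hence $|\xi|^{k_s}\leq \max(1,|\xi|^{\kappa_s})$, and the second exponent is absorbed into the first at the cost of enlarging $c'$ (the harmless bounded factor being absorbed into $M$). Rewriting $K\Gamma(1/k_s)=M/\Gamma(1/k_s)$ then yields \eqref{6.3}. The principal obstacle is the middle step, the uniform exponential control of the Mittag-Leffler-type series; everything else is a direct application of Lemma \ref{conv} together with routine bookkeeping of constants.
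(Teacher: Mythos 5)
Your proposal is correct and follows essentially the same route as the paper: the paper likewise estimates each coefficient via the convolution lemma, obtaining $|h_C(\xi,x)|\leq \frac{K|\xi|^{|C|+1-k_s}}{R^{|C|}\Gamma((|C|+1)/k_s)}e^{c|\xi|^{\kappa_s}}$, and then sums $\sum_C (R'/R)^{|C|}|\xi|^{|C|}/\Gamma((|C|+1)/k_s)$. The only difference is one of detail: the paper compresses the Mittag--Leffler-type growth estimate and the absorption of $e^{\tilde c|\xi|^{k_s}}$ into $e^{c'|\xi|^{\kappa_s}}$ (using $\kappa_s\geq k_s$) into a single displayed inequality, whereas you have made those steps, together with the multi-index counting, explicit.
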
 
				   \begin{proof}
				      We have from \eqref{6.2} for $C\not=0$
				    \begin{equation}
							\begin{aligned}  &
							|{h}_C(\xi,x)|=|{f}_{C}^s(\xi,x)\underset{k_s}{*}
							\frac{\xi^{|C|-k_s}}{\Gamma(|C|/k_s)}|\leq 
							\frac{K|\xi|^{|C|+1-k_s}e^{c|\xi|^{\kappa_s}}}{R^{|C|}\Gamma(\frac{|C|+1}{k_s})}.
							\end{aligned}\label{hC}
						\end{equation}
							Hence if $|Z|<R'<R$,  there exist $M$ and $c'$ such that 
						\begin{equation*}
							\begin{aligned} & |{h}(\xi,x,Z)|\leq
							   			\sum_{C \in \mathbb{N}^N}|{h}_{C}(\xi,x)Z^{C}| \\
							 \leq & Ke^{c|\xi|^{\kappa_s}}\sum_{C \in \mathbb{N}^N}
							(\frac{R'}{R})^{|C|} \frac{|\xi|^{|C|+1-k_s}}{\Gamma(\frac{|C|+1}{k_s})}
							\leq \frac{M|\xi|^{1-k_s}e^{c'|\xi|^{\kappa_s}}}{\Gamma(1/k_s)}.
							\end{aligned}
						\end{equation*}	
				   \end{proof}  	
      	 \begin{lem} 
	 		   	  	Let $\widetilde{g}(t,x,Z)=\widetilde{f}(t,x,tZ)$. Then
						  $\widetilde{g}(t,x,Z)$ is $\bf k$-multisummable. \label{lem6.2}
				\end{lem}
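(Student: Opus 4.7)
My plan is to expand $\widetilde{f}(t,x,Z)=\sum_{C\in\mathbb{N}^N}\widetilde{f}_C(t,x)Z^C$, from which
\[
\widetilde{g}(t,x,Z)=\widetilde{f}(t,x,tZ)=\sum_{C\in\mathbb{N}^N}t^{|C|}\widetilde{f}_C(t,x)Z^C.
\]
I may assume $\widetilde{f}(0,x,Z)=0$, because otherwise subtracting the holomorphic function $\widetilde{f}(0,x,0)$ (note $\widetilde{g}(0,x,Z)=\widetilde{f}(0,x,0)$ is independent of $Z$) only changes things by a holomorphic (hence trivially multisummable) constant-in-$t$ term. The hypothesis on $\widetilde{f}$ provides functions $f^{s}(\xi,x,Z)=\sum_{C}f_{C}^{s}(\xi,x)Z^{C}$ on $S_{s}^{*}\times\Omega$ satisfying the estimates \eqref{6.2}, built from $\widetilde{f}$ by formal $k_{1}$-Borel followed by successive accelerations.

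\textbf{Construction of $g^{s}$.} I will define
\[
g_{C}^{s}(\xi,x):=f_{C}^{s}(\xi,x)\underset{k_{s}}{*}\frac{\xi^{|C|-k_{s}}}{\Gamma(|C|/k_{s})}\quad (C\ne 0),\qquad g_{0}^{s}(\xi,x):=f_{0}^{s}(\xi,x),
\]
and set $g^{s}(\xi,x,Z):=\sum_{C}g_{C}^{s}(\xi,x)Z^{C}$. Two checks are required. First, at $s=1$, I must verify $g_{C}^{1}=\widetilde{\mathscr{B}}_{k_{1}}\widetilde{g}_{C}$ where $\widetilde{g}_{C}=t^{|C|}\widetilde{f}_{C}$. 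Using $\widetilde{\mathscr{B}}_{k_{1}}(t^{|C|})=\xi^{|C|-k_{1}}/\Gamma(|C|/k_{1})$ together with the fact that formal $k_{1}$-Borel turns products into $k_{1}$-convolutions (a direct beta-integral computation), this is immediate. Second, for $s\ge 2$ I must show $g_{C}^{s}=\mathscr{A}_{k_{s},k_{s-1},\theta_{s-1}}g_{C}^{s-1}$. This follows from Lemma \ref{lem4.1} eq.~\eqref{eq4.2}, applied to $\phi_{1}=f_{C}^{s-1}$ and $\phi_{2}=\xi^{|C|-k_{s-1}}/\Gamma(|C|/k_{s-1})$, together with the monomial identity
\[
\mathscr{A}_{k_{s},k_{s-1},\theta_{s-1}}\!\left(\frac{\xi^{|C|-k_{s-1}}}{\Gamma(|C|/k_{s-1})}\right)=\frac{\xi^{|C|-k_{s}}}{\Gamma(|C|/k_{s})},
\]
which in turn reduces to $\mathscr{L}_{k,\theta}(\xi^{m-k}/\Gamma(m/k))=t^{m}$ for both $k=k_{s-1}$ and $k=k_{s}$, paired with the fact that $\mathscr{B}_{k,\theta}$ inverts $\mathscr{L}_{k,\theta}$.

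\textbf{Bounds and conclusion.} With these $g_{C}^{s}$ in place, $g^{s}$ is exactly the function $h$ produced by Lemma \ref{lem6-1} from the data $\{f_{C}^{s}\}$. That lemma immediately yields, for $|Z|<R'<R$, the bound
\[
|g^{s}(\xi,x,Z)|\le\frac{M|\xi|^{1-k_{s}}e^{c'|\xi|^{\kappa_{s}}}}{\Gamma(1/k_{s})},\qquad (\xi,x)\in S_{s}^{*}\times\Omega_{0},
\]
so that $\xi^{k_{s}-1}g^{s}(\xi,x,Z)$ is holomorphic at $\xi=0$ and $g^{s}\in\mathrm{Exp}(\kappa_{s},S_{s}^{*}\times\Omega_{0}\times\{|Z|<R'\})$. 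Combined with the two identities above, this is precisely the datum required to certify that $\widetilde{g}$ is $\mathbf{k}$-multisummable in multidirection $\boldsymbol{\theta}$, with $\mathbf{k}$-sum $(\mathscr{L}_{k_{p},\theta_{p}}g^{p})(t,x,Z)$. The main obstacle is the acceleration-of-monomial identity for $\xi^{|C|-k_{s-1}}/\Gamma(|C|/k_{s-1})$; once that is established, the commutation of $\mathscr{A}_{k_{s},k_{s-1},\theta_{s-1}}$ with $k$-convolution provided by Lemma \ref{lem4.1} does the rest, and summation over $C$ is handled by Lemma \ref{lem6-1}.
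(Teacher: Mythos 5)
Your construction is essentially the paper's own proof: the same expansion $\widetilde{g}(t,x,Z)=\sum_{C}t^{|C|}\widetilde{f}_{C}(t,x)Z^{C}$, the same formulas $g^{s}_{C}=f^{s}_{C}\underset{k_s}{*}\dfrac{\xi^{|C|-k_s}}{\Gamma(|C|/k_s)}$, and the same appeal to Lemma \ref{lem6-1} for the summed estimate. You are in fact more explicit than the paper about why these formulas are the correct multisummability data (the monomial acceleration identity plus \eqref{eq4.2} of Lemma \ref{lem4.1}); the paper simply writes the formulas down. That part is fine.

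The one step that does not work as written is your reduction to the case $\widetilde{f}(0,x,Z)=0$. Subtracting the $Z$-independent function $\widetilde{f}(0,x,0)$ makes $\widetilde{g}(0,x,Z)$ vanish, but it does \emph{not} make $\widetilde{f}(0,x,Z)$ vanish: the coefficients $\widetilde{f}_{C}(0,x)$ for $C\neq0$ are untouched, and the multisummability data $f^{s}_{C}$ with the bounds \eqref{6.2} that your construction feeds into Lemma \ref{lem6-1} are, by Definition (II), attached to $\widetilde{f}-\widetilde{f}(0,x,Z)$, not to $\widetilde{f}$ itself. The correct reduction is to set $\widetilde{f}_{*}=\widetilde{f}-\widetilde{f}(0,x,Z)$ and decompose
\begin{equation*}
\widetilde{g}(t,x,Z)=\widetilde{f}_{*}(t,x,tZ)+\widetilde{f}(0,x,tZ),
\end{equation*}
running your argument on the first summand. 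The second summand is \emph{not} constant in $t$, as you claim; it is the convergent series $\sum_{C}\widetilde{f}_{C}(0,x)t^{|C|}Z^{C}$, holomorphic at $t=0$, and is multisummable for that (slightly different) reason. This is exactly how the paper closes the case $\widetilde{f}(0,x,Z)\neq0$, and with that correction your proof is complete.
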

					\begin{proof} 
				  	We have  $\widetilde{g}(t,x,Z)=\sum_{C\in \mathbb{N}^N}\widetilde{f}_{C}(t,x)t^{|C|}Z^{C}$.
					  First assume $\widetilde{f}(0,x,Z)=0$. Then
					  $\widehat{g}(\xi,x,Z)=\widehat{f}_{0}(\xi,x)+
				    	\sum_{C\in \mathbb{N}^N,C\not=0}\widehat{f}_{C}(\xi,x)\underset{k_1}{*}
						    \frac{\xi^{|C|-k_1}}{\Gamma(|C|/k_1)}Z^{C}$ and 
				    \begin{equation*}
				    	\begin{aligned} 
				    &	{g}^s(\xi,x,Z)={f}_{0}^s(\xi,x)+
					    \sum_{C\in \mathbb{N}^N, C\not=0}{f}_{C}^s(\xi,x)\underset{k_s}{*}
					    \frac{\xi^{|C|-k_s}}{\Gamma(|C|/k_s)}Z^{C}.
				  	\end{aligned}
			   \end{equation*}
				    We have from Lemma \ref{lem6-1}  
					 \begin{equation}
								\begin{aligned}  
								&  |{g}^s(\xi,x,Z)|\leq  \frac{M|\xi|^{1-k_s}e^{c'|\xi|^{\kappa_s}}}{\Gamma(1/k_s)}
								\quad (\xi,x)\in S^*_s\times \Omega_0, |Z|<R'.
								\end{aligned}
					\end{equation} 
					and  $\widetilde{g}(t,x,Z)$ is $\bf k$-multisummable.
						Next assume $\widetilde{f}(0,x,Z)\not=0$. 
						Let $f^*(t,x,Z)=f(t,x,Z)-f(0,x,Z)$. Then $f^*(0,x,Z)=0$ and $f^*(t,x,tZ)$ is 
					$\bf k$-multisummable. $f(0,x,tZ)$ is holomorphic at $t=0$, hence 
					$\widetilde{g}(t,x,Z)=\widetilde{f}^*(t,x,tZ)+\widetilde{f}(0,x,tZ)$ is also 
					$\bf k$-multisummable. 
				\end{proof}
			  Thus we get Lemma \ref{lem5.1}. We proceed to show Proposition \ref{prop5.3}. We note 
					\begin{equation}\left \{
						\begin{aligned}
						& g_i(t,x, V, P):=f_i(t,x, tV, tP),\;\; g_i(0,x,V,P)=f_i(0,x,0,0)=0 \\
						&  g_i(t,x, V, P)=\sum_{\alpha,A}g_{i,\alpha,A}(t,x)V^{\alpha}P^A . 
						\end{aligned}\right .
					\end{equation}
					and
						\begin{equation}
						\begin{aligned}
						&  g_i^s(\xi,x, V, P)=\sum_{\alpha,A}g_{i,\alpha,A}^{s}(\xi,x)V^{\alpha}P^A. 
						\end{aligned}
					\end{equation}
					Let $C=(\alpha,A)$ and $h_{C}(\xi,x)=g_{i,\alpha,A}^{s}(\xi,x)$ in \eqref{hC} 
					in lemma  \ref{lem6-1}. Then  
						\begin{equation*}\left \{
							\begin{aligned} & 
							|g_{i,\alpha,A}^{s}(\xi,x)| \leq \frac{K|\xi|^{|\alpha|+|A|+1-k_s}e^{c|\xi|^{\kappa_s}}}
							{R^{|\alpha|+|A|}\Gamma(\frac{|\alpha|+|A|+1}{k_s})}, \\
							&|{g}_i^s(\xi,x,Z)|\leq  \frac{M|\xi|^{1-k_s}e^{c|\xi|^{\kappa_s}}}{\Gamma(1/k_s)}			
							\end{aligned}\right .
						\end{equation*}
				and
						\begin{equation}
							\begin{aligned} & 
							g_{i,\alpha,A}^{s}(\xi,x)\ll \frac{M'|\xi|^{|\alpha|+|A|+1-k_s}e^{c|\xi|^{\kappa_s}}}
							{R^{
							|\alpha|+|A|}\Gamma(\frac{|\alpha|+|A|+1}{k_s})}{\Theta(X)}. 
							\end{aligned}
						\end{equation}
			       Hence  it follows from Lemma 3.2 that ${\Theta(X)} \ll \frac{B^{|\alpha|+|A|}
				\Theta^{|\alpha|+|A|}(X)}{(|\alpha|+|A|)!}$ and
				\begin{equation*}
					\begin{aligned}& 
					\frac{M'|\xi|^{|\alpha|+|A|+1-k_s}e^{c|\xi|^{\kappa_s}}}
							{R^{||\alpha|+|A|}\Gamma(\frac{|\alpha|+|A|+1}{k_s})}{\Theta(X)} , \\
					&	\ll \frac{M'|\xi|^{|\alpha|+|A|+1-k_s}e^{c|\xi|^{\kappa_s}}}
							{R^{|\alpha|+|A|}}
							 \frac{B^{|\alpha|+|A|}}{(|\alpha|+|A|)!} \Theta^{|\alpha|+|A|}(X).
						\end{aligned}
				\end{equation*}
					There exist positive constants 
					$G_{i,\alpha,A}(s)= M'(\frac{B}{R})^{|\alpha|+|A|}$ and $c'>c$  such that
				\begin{equation}\left \{
					\begin{aligned}
					\ & g_{i,0,0}^{s}(\xi,x)\ll G_{i,0,0}(s)\frac{\xi^{1-k_{s}}e^{c'|\xi|^{\kappa_{s}}}}
							{\Gamma(\frac{1}{k_{s}})}{\Theta(X)} \\
					 &	g_{i,\alpha,A}^{s}(\xi,x)\ll G_{i,\alpha,A}(s)\frac{\xi^{|\alpha|+|A|-k_{s}}
					 e^{c'|\xi|^{\kappa_{s}}}}
							{\Gamma(\frac{|\alpha|+|A|}{k_{s}})}\frac{\Theta^{|\alpha|+|A|}(X)}{(|\alpha|+|A|)!}
							\quad |\alpha|+|A|\geq 1
					\end{aligned}\right .
				\end{equation}
					and $ \sum_{\alpha,A}  G_{i,\alpha,A}(s)U^{\alpha}p^A$ converges in 
					a neighborhood of $(U,p)=(0,0)$. Thus we get Proposition \ref{prop5.3}.
		

\begin{thebibliography}{99}
 	  \bibitem{Bal} Balser. W., Formal power series and linear systems of
	   			meromorphic ordinary differential equations, Universitext, Springer, 2000.
	  \bibitem{Bal-1} Balser. W.,  Multisummability of formal power series solutions of 
	  				linear partial differential equations with constant coefficients,	J. Differ. Eq. 201 (19)	
	  				(2004) 63-74.	
	  \bibitem{BBRS} Baler, W, Braaksam, B.L.J. Ramis, J.P. Sibuya, Y. 
	  		Multisummability of formal power series solutions of linear ordinary differential 
	  		equations, Asymptotic Anal. 5 (1991), 27-45.
	  \bibitem{Bra} Braaksama, B.L.J. Multisummability of formal power series solutions
	  		of nonlinear meromorphic differentia equations, Ann.Inst.Fourier Grenoble,
	  		42 (1992), 517-540.	  				
		\bibitem{GT}	G\'{e}rard, R, Tahara, H, Singular Nonliear Partial Differential Equations,
				Aspect of Mathematics, Vieweg 1996.
		\bibitem{H}  Hibino M, Borel summability of divergent solutions for first-order partial differential
		   equations with variable coefficients II. J. Differ Eq. (2006) 227 499-533.  
		\bibitem{Ich} Ichinobu, K, On {\bf k}-summability of formal solutions for a class of	partial 
				differential operators with time dependent coefficients. J Differ Eq. (2014) 257(8) 3048-70  	
		\bibitem{LCZ} Luo, Z., Chen, H.,  Zhang, C.,
		    Exponential-type Nagumo norms and summabilty of formal solutions of 
		    singular partial differential equations, Ann. Inst. Fourier 62, 2 (2012) 571-618.	
   \bibitem{LMS} Lutz DA, Miyake, M. Sch\"{a}fke, On the Borel summability of divergence solution solutions
      	of the heat equations, Nagoya Math. J. 154 (1999) 1-29. 
    \bibitem{Malek-1} Malek, S. On the summability of formal solutions of linear partial equations, J.Dym.
    		 Control Syst. 11(3) (2005) 389-403.	
    \bibitem{Malek-2} Malek, S. Gevrey functions solutions of partial differential equations with Fuchsian
      and irregular singularity. J.Dym.
    		 Control Syst. 15(2) (2009) 277-305.			 
    \bibitem{Mich} Michalik, S.  Multisummability of formal solutions of inhomogeneous linear partial 
    			differential 
			    equations with constant coefficients, J. Dyn. Control Sys. 18(1) (2012) 103-133    	 		
		\bibitem{Nagumo}
		    Nagumo, M.  \"{U}ber das Anfangswertprolblem  Partieller Differetialgleichungen, Japan. J. Math. 18
		   (1941) 41-47
	 	\bibitem{Nirenberg} Nirenberg, L. An abstract form of the nonlinear Cauchy Kowalevski theorem, 
       	J. Differential Geometry, 6 (1972) 561-576.	
    \bibitem{O-1} \={O}uchi, S.  Multisummability of formal solutions of some linear partial differential
       equations. J.Differ.Eq. 185 (2002) 513-549..	
		\bibitem{O-2}	\={O}uchi, S., Borel summability of formal solutions of some first order singular
		  partial differential equations and normal formal forms of vector fields, J. Math. Soc. Japan, 57 
		  (2005) 415-460. 
		\bibitem{O-3} \={O}uchi, S., Multisummability of formal solutions of nonlinear 
			partial differential equations in complex domains. Asymptotic Analysis, 
			47 (2006) 187-225. 
    \bibitem{Re}  Remy, P. Gevrey order and summablity of formal series solutions of certain classes of 
	    inhomogeneous linear integro-differential equations with variable coefficients
      J.Dyn. Control Syst 23 (2017) 853-937.
    \bibitem{TY}	Tahara, H, Yamazawa, H. Multisummability of formal solutions to Cauchy problems for some 
     linear partial differentieal equations, J.Differ. Eq  255(10) (2013) 592-637 
    \bibitem{Wag} Wagschal C. 
    		Le probl\`{e}me de Goursat type non lin\'{e}aire, J Math Pures Appl., 58 (1979) 309-317  
     	\bibitem{Was} Wasow, W., Asymptotic expansions for ordinary differential equations, Reprint of
		  the 1976 edition. Dover Publications, 1987. 		 
	\end{thebibliography}
\end{document}